\date{}
\theoremstyle{definition}
\newtheorem{theo}{Theorem}[section]
\newtheorem{defi}[theo]{Definition}
\newtheorem{lemm}[theo]{Lemma}
\newtheorem{prop}[theo]{Proposition}
\newtheorem{cor}[theo]{Corollary}
\newtheorem{rem}[theo]{Remark}
\newtheorem{assum}{Assumption}
\newcommand{\relmiddle}[1]{\mathrel{}\middle#1\mathrel{}}
\newcommand{\1}{\mbox{1}\hspace{-0.25em}\mbox{l}}
\newcommand{\rsi}{]\hspace{-0.35mm}]}
\newcommand{\lsi}{[\hspace{-0.35mm}[}
\newcommand{\filtprobsp}{(\Omega,\mathcal{F},(\mathcal{F}_t)_{t\in[0,T]},\bold{P})}
\newcommand{\cadlag}{\text{c\`{a}dl\`{a}g}}
\def\widebar{\accentset{{\cc@style\underline{\mskip10mu}}}}
\numberwithin{equation}{section}
\def\rnum#1{\expandafter{\romannumeral #1}} 
\def\Rnum#1{\uppercase\expandafter{\romannumeral #1}}
\title{BSDEs driven by cylindrical martingales with application to approximate hedging in bond markets}
\author{Yushi Hamaguchi\thanks{Department of Mathematics, Kyoto University, Kyoto 606--8502, Japan, hamaguchi@math.kyoto-u.ac.jp}}
\begin{document}
\maketitle


\begin{abstract}
We consider Lipschitz-type backward stochastic differential equations (BSDEs) driven by cylindrical martingales on the space of continuous functions. We show the existence and uniqueness of the solution of such infinite-dimensional BSDEs and prove that the sequence of solutions of corresponding finite-dimensional BSDEs approximates the original solution. We also consider the hedging problem in bond markets and prove that, for an approximately attainable contingent claim, the sequence of locally risk-minimizing strategies based on small markets converges to the generalized hedging strategy.
\end{abstract}


\section{Introduction}
\paragraph{}
In mathematical finance, backward stochastic differential equations (BSDEs) have been studied and applied to the theory of option hedging and portfolio optimization problems in stock markets where a finite-number of assets are traded. El Karoui, Peng, and Quenez \cite{a_ElKaroui-_97} studied hedging problems, recursive utilities, and control problems in terms of finite-dimensional BSDEs. On the other hand, infinite-dimensional (forward) SDEs have also been extensively studied and applied to mathematical finance; see Da Prato and Zabczyk \cite{b_DaPrato-Zabczyk_14} for a summary of infinite-dimensional SDEs and Carmona and Tehranchi \cite{b_Carmona-Tehranchi_06} for applications to bond markets. In this paper, motivated by hedging problems in bond markets, we study some infinite-dimensional BSDEs.\par
As is mentioned by Bj\"{o}rk et al.\ \cite{a_Bjork-_97}, in the continuous-time bond market, unlike in the stock market, there exists a continuum of tradable assets (zero-coupon bonds parametrized by their maturities), and the time evolution of the price curve is described by an infinite-dimensional stochastic process. To describe the portfolio theory in this model, we have to consider trading strategies in which possibly a continuum of zero-coupon bonds of each maturity can contribute. Hence, in the bond market, stochastic integrals with respect to infinite-dimensional (semi)martingales naturally arise and the term ``trading strategy'' has to be generalized in the infinite-dimensional setting.\par
A theory of stochastic integration with respect to cylindrical martingales that is suitable for this purpose has been studied by De Donno and Pratelli \cite{a_DeDonno-Pratelli_04} and Mikulevicius and Rozovskii \cite{ic_Mikulevicius-Rozovskii_98,ic_Mikulevicius-Rozovskii_99}. They introduced a space of generalized integrands and defined generalized stochastic integrals with respect to infinite-dimensional martingales. More generally, De Donno and Pratelli \cite{ic_DeDonno-Pratelli_06,a_DeDonno-Pratelli_05} studied stochastic integration with respect to infinite-dimensional semimartingales (see also De Donno \cite{a_DeDonno_07}). In the context of market models, generalized integrands are regarded as generalized trading strategies, which do not necessarily represent realistic trading strategies. Generalized strategies are defined as limits of realistic trading strategies in bond markets such as simple trading strategies and measure-valued strategies. From a viewpoint of applications to mathematical finance, it is important to construct a ``reasonable'' approximate sequence for generalized integrands.\par
The main purpose of this paper is to contribute the above mentioned problem. We study Lipschitz-type BSDEs driven by cylindrical martingales on the space of continuous functions. We shall show the existence and uniqueness of the solution of such infinite-dimensional BSDEs and prove that the sequence of solutions of corresponding finite-dimensional BSDEs approximates the original solution. We may say that this approximation justifies the formulation of bond markets in terms of infinite-dimensional BSDEs.\par
The hedging problem in bond markets is also discussed as an application. We consider bond market models which satisfy the so-called ``structure condition'' as in the finite-dimensional setting. We introduce concepts of generalized trading strategies and approximately attainable contingent claims and construct submarkets consisting of a finite number of zero-coupon bonds, which we will call small markets. We consider locally risk-minimizing strategies based on small markets, which were defined by Schweizer \cite{a_Schweizer_91,ic_Schweizer_01,ic_Schweizer_08} and studied by Buckdahn \cite{ic_Buckdahn_95} in terms of BSDEs driven by (one-dimensional) martingales. We show that, for approximately attainable contingent claims in bond markets, the generalized hedging strategy is approximated by a sequence of locally risk-minimizing strategies based on small markets. This result further develops the approaches to approximate value-functions in infinite-dimensional markets by the corresponding finite-dimensional value-functions, as studied by De Donno, Guasoni, and Pratelli \cite{a_DeDonno-_05} and Campi \cite{a_Campi_09}.\par
This paper is organized as follows. In Section \ref{section: cylindrical mart}, we review stochastic integration with respect to cylindrical martingales on the space of continuous functions as developed in \cite{a_DeDonno-Pratelli_04} and \cite{ic_Mikulevicius-Rozovskii_98,ic_Mikulevicius-Rozovskii_99}. Section \ref{section: BSDE} is devoted to the study of BSDEs driven by cylindrical martingales; we show the existence, uniqueness, and an approximation result. In Section \ref{section: application}, we study an application to the hedging problem in bond markets and show an approximation result for generalized strategies.


\section{Stochastic integration with respect to cylindrical martingales}\label{section: cylindrical mart}
\paragraph{}
In this section, we shall recall briefly the theory of infinite-dimensional stochastic integrations with respect to cylindrical martingales, following Mikulevicius and Rozovskii \cite{ic_Mikulevicius-Rozovskii_98,ic_Mikulevicius-Rozovskii_99} and De Donno and Pratelli \cite{a_DeDonno-Pratelli_04}.\par
Let $\filtprobsp$ be a filtered probability space satisfying the usual conditions and $\mathcal{P}$ be the predictable $\sigma$-field on $\Omega\times[0,T]$. Throughout the paper, we assume that $T\in(0,\infty)$ is a constant. We denote by $\mathcal{H}^2(\bold{P})$ the space of all right-continuous square-integrable martingales. Let $X$ be a compact metric space and $\mathcal{B}(X)$ be the Borel $\sigma$-field on $X$. In our applications to bond markets, $X$ will be considered as a compact interval $[0,T^*]$ representing the set of maturities of zero-coupon bonds. Let $\mathcal{C}=\mathcal{C}(X)$ be the space of all continuous functions on $X$ with the topology of uniform convergence and $\mathcal{M}=\mathcal{M}(X)$ be its topological dual, i.e., the space of Radon measures on $X$. It is well-known that $\mathcal{M}$ is separable with respect to the weak$^*$-topology $\sigma(\mathcal{M},\mathcal{C})$. We denote the canonical pairing by $\langle\cdot,\cdot\rangle_{\mathcal{M},\mathcal{C}}$. Denote by $\mathcal{K}^+(X)$ the space of all symmetric and nonnegative-definite functions on $X\times X$, i.e., the functions $F\colon X\times X\to\mathbb{R}$ such that $F(x,y)=F(y,x)$ for all $x,y\in X$ and $\sum^d_{i,j=1}F(x_i,x_j)c_ic_j\geq0$ for all $d\in\mathbb{N}$, $x_1,\dots,x_d\in X$, and $c_1,\dots,c_d\in\mathbb{R}$. The set of all $F\in\mathcal{K}^+(X)$ which is continuous on $X\times X$ is denoted by $\mathcal{K}^+_{\text{c}}(X)$.\par
Consider a family of square-integrable martingales $\mathbb{M}=((M^x_t)_{t\in[0,T]})_{x\in X}$, that is, for all $x\in X$, $M^x\in\mathcal{H}^2(\mathbb{P})$. We impose the following assumption.
\begin{assum}\label{assumption: cylindrical mart}
There exist a strictly increasing and bounded predictable process $A$ and a $\mathcal{P}\otimes\mathcal{B}(X)\otimes\mathcal{B}(X)$-measurable function $\mathcal{Q}$ on $\Omega\times[0,T]\times X\times X$ that satisfy the following.
\begin{enumerate}
\renewcommand{\labelenumi}{(\roman{enumi})}
\item
For all $(\omega,s)\in\Omega\times[0,T]$, $\mathcal{Q}_{s,\omega}$ is in $\mathcal{K}^+_{\text{c}}(X)$.
\item
For all $(\omega,s)\in\Omega\times[0,T]$, $\int^t_0\mathcal{Q}_{s,\omega}\,dA_s(\omega)$ is in $\mathcal{K}^+_{\text{c}}(X)$.
\item
For each $x,y\in X$ and $t\in[0,T]$,
\begin{equation*}
	\langle M^x,M^y\rangle_t=\int^t_0\mathcal{Q}_s(x,y)\,dA_s\ \bold{P}\text{-a.s.}
\end{equation*}
\end{enumerate}
\end{assum}
\begin{rem}
In some literatures, the process $A$ is assumed to be only nondecreasing and predictable. In such a case, without loss of generality we may further assume that $A$ is strictly increasing and bounded by replacing $A_t$ by $\arctan(t+A_t)$.
\end{rem}
For $\mathcal{Q}\in\mathcal{K}^+_{\text{c}}(X)$, we can define a corresponding linear mapping (also denoted by $\mathcal{Q}$) from $\mathcal{M}$ into $\mathcal{C}$ by setting
\begin{equation*}
	(\mathcal{Q}\mu)(x)=\int_X\mathcal{Q}(x,y)\,\mu(dy)=\langle\mu,\mathcal{Q}(x,\cdot)\rangle_{\mathcal{M},\mathcal{C}},\ \mu\in\mathcal{M}.
\end{equation*}
Then $\mathcal{Q}$ is symmetric and nonnegative-definite, i.e., $\langle\mu,\mathcal{Q}\nu\rangle_{\mathcal{M},\mathcal{C}}=\langle\nu,\mathcal{Q}\mu\rangle_{\mathcal{M},\mathcal{C}}$ and $\langle\mu,\mathcal{Q}\mu\rangle_{\mathcal{M},\mathcal{C}}\geq0$ for all $\mu,\nu\in\mathcal{M}$. $\mathcal{Q}$ is also (weakly) continuous.\par
Let $\mathcal{D}$ denote the set of all linear combinations of Dirac measures on $X$. For an element $\mu=\sum^n_{i=1}c_i\delta_{x_i}\in\mathcal{D}$, where each $c_i$ is a real constant and $\delta_{x_i}$ is the Dirac measure at $x_i\in X$, we set
\begin{equation*}
	\mathbb{M}(\mu)=\sum^n_{i=1}c_iM^{x_i}.
\end{equation*}
Then, $\mathbb{M}(\mu)$ is a square-integrable martingale. The linear mapping $\mu\mapsto\mathbb{M}(\mu)$ from $\mathcal{D}$ into $\mathcal{H}^2(\bold{P})$ extends uniquely to a continuous linear mapping $\mathbb{M}\colon\mathcal{M}\to\mathcal{H}^2(\bold{P})$. For each $\mu,\nu\in\mathcal{M}$, the cross variation between $\mathbb{M}(\mu)$ and $\mathbb{M}(\nu)$ is given by
\begin{equation*}
	\langle\mathbb{M}(\mu),\mathbb{M}(\nu)\rangle_t=\int^t_0\langle\mu,\mathcal{Q}_s\nu\rangle_{\mathcal{M},\mathcal{C}}\,dA_s.
\end{equation*}
The function $\mathcal{Q}_{s,\omega}$ is called the covariance operator function, while $\int^t_0\mathcal{Q}_{s,\omega}\,dA_s(\omega)$ is called the predictable quadratic variation of the cylindrical martingale $\mathbb{M}$.\par
A simple integrand $H$ is a process of the form
\begin{equation*}
	H=\sum^n_{i=1}h^i\delta_{x_i}\,,
\end{equation*}
where each $h^i$ is a real-valued bounded predictable process and $x_i\in X$. $H$ can be considered as an $\mathcal{M}$-valued process. Define the stochastic integral of $H$ with respect to $\mathbb{M}$ by
\begin{equation}\nonumber
	H\bullet\mathbb{M}=\int^\cdot_0H_s\,d\mathbb{M}_s=\sum^n_{i=1}\int^\cdot_0h^i_s\,dM^{x_i}_s.
\end{equation}
Note that $H\bullet\mathbb{M}\in\mathcal{H}^2(\bold{P})$ and
\begin{align*}
	\bold{E}\left[\left(\int^T_0H_s\,d\mathbb{M}_s\right)^2\right]&=\bold{E}\left[\int^T_0\sum^n_{i,j=1}h^i_sh^j_s\,d\langle M^{x_i},M^{x_j}\rangle_s\right]\\
		&=\bold{E}\left[\int^T_0\sum^n_{i,j=1}h^i_sh^j_s\mathcal{Q}_s(x_i,x_j)\,dA_s\right]\\
		&=\bold{E}\left[\int^T_0\langle H_s,\mathcal{Q}_sH_s\rangle_{\mathcal{M},\mathcal{C}}\,dA_s\right].
\end{align*}
An $\mathcal{M}$-valued process $H$ is called (weakly) predictable if the real-valued processes $\langle H_s,f\rangle_{\mathcal{M},\mathcal{C}}$ are predictable for all $f\in\mathcal{C}$. Note that for an $\mathcal{M}$-valued predictable process $H$, $\langle H_s,\mathcal{Q}_sH_s\rangle_{\mathcal{M},\mathcal{C}}$ is a real-valued predictable process. Let $L^2(\mathbb{M},\mathcal{M})$ denote the set of all $\mathcal{M}$-valued predictable processes $H$ such that
\begin{equation}\label{measure-valued integrand norm}
	\bold{E}\left[\int^T_0\langle H_s,\mathcal{Q}_sH_s\rangle_{\mathcal{M},\mathcal{C}}\,dA_s\right]^{1/2}<\infty.
\end{equation}
\begin{theo}
The set of all simple integrands is dense in $L^2(\mathbb{M},\mathcal{M})$ with respect to the norm (\ref{measure-valued integrand norm}). Thus, the map $H\mapsto H\bullet\mathbb{M}$ can extend continuously from $L^2(\mathbb{M},\mathcal{M})$ to $\mathcal{H}^2(\bold{P})$ uniquely. This extension is linear and
\begin{equation*}
	\langle H\bullet\mathbb{M},K\bullet\mathbb{M}\rangle_t=\int^t_0\langle H_s,\mathcal{Q}_sK_s\rangle_{\mathcal{M},\mathcal{C}}\,dA_s
\end{equation*}
holds for all $H,K\in L^2(\mathbb{M},\mathcal{M})$.
\end{theo}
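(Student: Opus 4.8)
The plan is to prove that the simple integrands are dense in $L^2(\mathbb{M},\mathcal{M})$ first; the extension, the isometry, and the bracket formula then follow by routine limiting arguments. The text has already recorded, for simple $H$, the identity $\mathbf{E}\big[(\int_0^T H_s\,d\mathbb{M}_s)^2\big]=\mathbf{E}\big[\int_0^T\langle H_s,\mathcal{Q}_sH_s\rangle_{\mathcal{M},\mathcal{C}}\,dA_s\big]$, i.e.\ $H\mapsto H\bullet\mathbb{M}$ is isometric from the simple integrands into $\mathcal{H}^2(\mathbf{P})$; since $\mathcal{H}^2(\mathbf{P})$ is complete, once density is established the map extends uniquely by uniform continuity to a linear map on all of $L^2(\mathbb{M},\mathcal{M})$. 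For the bracket identity, note that for simple $H,K$ the formula $\langle H\bullet\mathbb{M},K\bullet\mathbb{M}\rangle_t=\int_0^t\langle H_s,\mathcal{Q}_sK_s\rangle_{\mathcal{M},\mathcal{C}}\,dA_s$ is immediate from the finite-dimensional cross-variations $\langle\int h^i\,dM^{x_i},\int h^j\,dM^{x_j}\rangle_t=\int_0^t h^i_s h^j_s\,\mathcal{Q}_s(x_i,x_j)\,dA_s$; equivalently, $(H\bullet\mathbb{M})_t(K\bullet\mathbb{M})_t-\int_0^t\langle H_s,\mathcal{Q}_sK_s\rangle_{\mathcal{M},\mathcal{C}}\,dA_s$ is a martingale. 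Taking simple $H^n\to H$ and $K^n\to K$ in $L^2(\mathbb{M},\mathcal{M})$, one has $H^n\bullet\mathbb{M}\to H\bullet\mathbb{M}$ in $\mathcal{H}^2(\mathbf{P})$ (and similarly for $K$) and $\int_0^t\langle H^n_s,\mathcal{Q}_sK^n_s\rangle_{\mathcal{M},\mathcal{C}}\,dA_s\to\int_0^t\langle H_s,\mathcal{Q}_sK_s\rangle_{\mathcal{M},\mathcal{C}}\,dA_s$ in $L^1$, so the martingale property persists in the limit; since the Stieltjes integral is predictable and of finite variation, uniqueness of the predictable compensator identifies it with $\langle H\bullet\mathbb{M},K\bullet\mathbb{M}\rangle_t$.

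Turning to density, fix $H\in L^2(\mathbb{M},\mathcal{M})$ and $\varepsilon>0$. First reduce to bounded data: writing $\|\mathcal{Q}_{s,\omega}\|:=\sup_{x,y\in X}|\mathcal{Q}_{s,\omega}(x,y)|$ (finite by Assumption~\ref{assumption: cylindrical mart}(i) and predictable since $\mathcal{C}$ is separable) and $H^{(N)}_s:=H_s\,\mathbf{1}_{\{\|H_s\|_{\mathrm{TV}}\le N,\ \|\mathcal{Q}_{s,\cdot}\|\le N\}}$, one has $\langle H_s-H^{(N)}_s,\mathcal{Q}_s(H_s-H^{(N)}_s)\rangle_{\mathcal{M},\mathcal{C}}\le\langle H_s,\mathcal{Q}_sH_s\rangle_{\mathcal{M},\mathcal{C}}$ and this tends to $0$ pointwise in $(\omega,s)$, so $\|H-H^{(N)}\|\to0$ by dominated convergence; hence we may assume $\|H_s\|_{\mathrm{TV}}\le N$ and $\|\mathcal{Q}_{s,\omega}\|\le N$ for all $(\omega,s)$ and a fixed $N$. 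Next reduce to finitely-valued processes: the ball $B_N=\{\mu\in\mathcal{M}:\|\mu\|_{\mathrm{TV}}\le N\}$ is weak$^*$-compact and metrizable (as $\mathcal{C}$ is separable); a weakly predictable $H$ with values in $B_N$ is $\mathcal{P}$-measurable into $B_N$ equipped with its weak$^*$ topology, so covering $B_N$ by finitely many small balls and sending $H_s$ to the centre of the first ball it meets produces finitely-valued $\mathcal{P}$-measurable $H^{(m)}$ with $H^{(m)}_s\to H_s$ weak$^*$ pointwise in $(\omega,s)$ and $\sup_m\|H^{(m)}_s-H_s\|_{\mathrm{TV}}\le 2N$. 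The decisive estimate is that this forces $\|H^{(m)}-H\|\to0$: for any bounded sequence $\nu_j\to0$ weak$^*$, continuity of $\mathcal{Q}_{s,\omega}$ on the compact set $X\times X$ makes $\{\mathcal{Q}_{s,\omega}(x,\cdot):x\in X\}$ norm-compact in $\mathcal{C}$, on which the convergence $\langle\nu_j,\cdot\rangle\to0$ is uniform, so $\|\mathcal{Q}_s\nu_j\|_{\mathcal{C}}=\sup_{x\in X}|\langle\nu_j,\mathcal{Q}_s(x,\cdot)\rangle|\to0$ and hence $|\langle\nu_j,\mathcal{Q}_s\nu_j\rangle_{\mathcal{M},\mathcal{C}}|\le\|\mathcal{Q}_s\nu_j\|_{\mathcal{C}}\,\sup_j\|\nu_j\|_{\mathrm{TV}}\to0$; applying this with $\nu_j=H^{(m)}_s-H_s$ and dominating by $4N^3$ (integrable because $A$ is bounded) gives the claim via dominated convergence.

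It remains to approximate a finitely-valued weakly predictable $H=\sum_{k=1}^r\mu_k\mathbf{1}_{E_k}$ (with $E_k\in\mathcal{P}$ pairwise disjoint and $\mu_k\in B_N$) by a simple integrand. For each $k$, partitioning $X$ into Borel sets of small diameter and placing the $\mu_k$-mass of each piece at a point of that piece gives $\nu^\ell_k\in\mathcal{D}$ with $\|\nu^\ell_k\|_{\mathrm{TV}}\le\|\mu_k\|_{\mathrm{TV}}$ and $\nu^\ell_k\to\mu_k$ weak$^*$ (bounding $|\langle\nu^\ell_k-\mu_k,f\rangle|$ by the modulus of continuity of $f$); by the same estimate as before, now with $\mathcal{Q}$ bounded by $N$ and dominated convergence over $(\omega,s)$, this yields $\|\mu_k\mathbf{1}_{E_k}-\nu^\ell_k\mathbf{1}_{E_k}\|\to0$. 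Finally, finite unions of predictable rectangles generate $\mathcal{P}$ and $\mathbf{P}\otimes dA$ is a finite measure, so indicators of such sets are dense in $L^2(\Omega\times[0,T],\mathcal{P},\mathbf{P}\otimes dA)$; choosing bounded simple predictable $h^{(k)}$ with $\mathbf{E}\int_0^T|h^{(k)}_s-\mathbf{1}_{E_k}|^2\,dA_s$ small and using $\|(\mathbf{1}_{E_k}-h^{(k)})\,\nu^\ell_k\|^2\le N^3\,\mathbf{E}\int_0^T|h^{(k)}_s-\mathbf{1}_{E_k}|^2\,dA_s$, we may replace each $\mathbf{1}_{E_k}$ by $h^{(k)}$. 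Summing the finitely many terms and collecting the Dirac points produces a simple integrand within $\varepsilon$ of the original $H$, which proves density.

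The main obstacle is the estimate in the second paragraph, namely upgrading weak$^*$ convergence of the measure-valued approximants to convergence in the degenerate energy seminorm $H\mapsto\mathbf{E}\big[\int_0^T\langle H_s,\mathcal{Q}_sH_s\rangle_{\mathcal{M},\mathcal{C}}\,dA_s\big]^{1/2}$: this is exactly where the continuity of the covariance kernel in Assumption~\ref{assumption: cylindrical mart}(i) is indispensable, since it provides the norm-compactness of $\{\mathcal{Q}_{s,\omega}(x,\cdot):x\in X\}$ in $\mathcal{C}$, while the preliminary truncation of $\|H_s\|_{\mathrm{TV}}$ and $\|\mathcal{Q}_{s,\omega}\|$ is what renders the resulting pointwise convergence integrable. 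An alternative, slicker route to density is to show directly that any $K\in L^2(\mathbb{M},\mathcal{M})$ that is ``orthogonal'' to every simple integrand is null: testing against $\mathbf{1}_F\,\mathbf{1}_{(u,v]}\,\delta_x$ with $F\in\mathcal{F}_u$ forces $t\mapsto\int_0^t(\mathcal{Q}_sK_s)(x)\,dA_s$ to be a predictable martingale of integrable finite variation, hence identically zero, so $\mathcal{Q}_sK_s\equiv0$ in $\mathcal{C}$ for a.e.\ $(\omega,s)$ and therefore $\langle K_s,\mathcal{Q}_sK_s\rangle_{\mathcal{M},\mathcal{C}}=0$ a.e.; that route, however, needs in addition the completeness of the quotient of $L^2(\mathbb{M},\mathcal{M})$ by the seminorm and Assumption~\ref{assumption: cylindrical mart}(ii) to ensure integrability of $\int_0^t|(\mathcal{Q}_sK_s)(x)|\,dA_s$.
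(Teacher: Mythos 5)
The paper itself gives no proof of this statement: Section \ref{section: cylindrical mart} explicitly recalls it from Mikulevicius--Rozovskii and De Donno--Pratelli, so your argument can only be compared with those references rather than with anything in the paper; as a self-contained proof it is essentially sound. The extension and bracket identity follow from density by the routine limiting arguments you describe, and the core of your density proof --- truncation, reduction to finitely-valued weak$^*$-measurable processes, discretization of each measure by combinations of Dirac masses, with the upgrade from weak$^*$ convergence of bounded approximants to convergence in the energy seminorm supplied by the norm-compactness of $\{\mathcal{Q}_{s,\omega}(x,\cdot):x\in X\}$ in $\mathcal{C}$ (precisely where the continuity of the kernel in Assumption \ref{assumption: cylindrical mart} enters) --- is correct and is the standard route. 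Three small remarks. First, your final step replacing $\mathbf{1}_{E_k}$ by elementary predictable processes is unnecessary: the paper's simple integrands only require \emph{bounded predictable} coefficient processes, so $\sum_k\nu^\ell_k\mathbf{1}_{E_k}$ is already a simple integrand once each $\nu^\ell_k$ is expanded in Dirac masses. Second, after the truncation you should carry the predictable set $\{\|\mathcal{Q}_{s,\cdot}\|\le N\}$ along inside the sets $E_k$ (equivalently, multiply all subsequent approximants by its indicator, which preserves the simple-integrand form), since the uniform domination of $\langle\nu,\mathcal{Q}_s\nu\rangle_{\mathcal{M},\mathcal{C}}$ by $4N^3$ is only available on that set. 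Third, the ``slicker'' orthogonality route you sketch at the end should be discarded rather than offered as an alternative: the paper notes that $L^2(\mathbb{M},\mathcal{M})$ is \emph{not} complete, and in an incomplete inner-product space a proper closed subspace can have trivial orthogonal complement, so triviality of the annihilator of the simple integrands would not yield density; you flag the completeness issue yourself, and since your main argument does not use this route, it does not affect the correctness of the proof.
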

\begin{rem}
In financial terms, each $H\in L^2(\mathbb{M},\mathcal{M})$ is considered as a measure-valued trading strategy in bond markets; the signed-measure $H_{t,\omega}$ represents the amount of holdings of zero-coupon bonds of all maturities among $X=[0,T^*]$ at time $t$ for an event $\omega$.
\end{rem}
The space $L^2(\mathbb{M},\mathcal{M})$ is not complete as mentioned in \cite{a_DeDonno-Pratelli_04}. We need further extension of integrands as follows.\par
For a symmetric, nonnegative-definite and continuous linear mapping $\mathcal{Q}\colon\mathcal{M}\to\mathcal{C}$, define the scalar product on $\mathcal{Q}(\mathcal{M})$ by
\begin{equation}\label{kernel product}
	(\mathcal{Q}\mu,\mathcal{Q}\nu)_{U_\mathcal{Q}}=\langle\mu,\mathcal{Q}\nu\rangle_{\mathcal{M},\mathcal{C}},\ \mu,\nu\in\mathcal{M}.
\end{equation}
Then $\mathcal{Q}(\mathcal{M})$ admits a unique completion $U_\mathcal{Q}$ in $\mathcal{C}$ with respect to the norm induced by (\ref{kernel product}). This completion $U_\mathcal{Q}$ is a separable Hilbert space and continuously embedded in $\mathcal{C}$. The mapping $\mathcal{Q}\colon\mathcal{M}\to\mathcal{C}$ can extend continuously to the canonical isomorphism from $U'_\mathcal{Q}$ to $U_\mathcal{Q}$, where $U'_\mathcal{Q}$ is the topological dual of $U_\mathcal{Q}$. Moreover, $U'_\mathcal{Q}$ is the completion of $\mathcal{M}/\text{Ker}\mathcal{Q}$ with respect to the norm induced by the scalar product
\begin{equation*}
	(\mu,\nu)_{U'_\mathcal{Q}}=\langle\mu,\mathcal{Q}\nu\rangle_{\mathcal{M},\mathcal{C}},\ \mu,\nu\in\mathcal{M}.
\end{equation*}\par
Accordingly, we can construct a family of Hilbert spaces $U_{t,\omega}=U_{\mathcal{Q}_{t,\omega}}$ and $U'_{t,\omega}=U'_{\mathcal{Q}_{t,\omega}}$ parametrized by $(t,\omega)\in[0,T]\times\Omega$. We call $(U_{t,\omega})_{(t,\omega)\in[0,T]\times\Omega}$ the family of covariance spaces for $\mathbb{M}$.
\begin{defi}
A $U'$-valued process $\mathbb{H}$ is a process on $\Omega\times[0,T]$ such that $\mathbb{H}_t(\omega)\in U'_{t,\omega}$ for all $(t,\omega)\in[0,T]\times\Omega$. We say that such an $\mathbb{H}$ is predictable if the process $(\omega,t)\mapsto(\mathbb{H}_t(\omega),\mu)_{U'_{t,\omega}}$ is predictable for any $\mu\in\mathcal{M}$.
\end{defi}
\begin{rem}\label{remark: CONS}
Let $\{x_1,x_2,\dots\}$ be a countable dense subset of $X$. By a standard orthogonalization procedure, we can construct a sequence of processes $\{e^m\}_{m\in\mathbb{N}}$ of the form $e^m_s=\sum^m_{k=1}\alpha^{m,k}_s\delta_{x_k}$ for each $m\in\mathbb{N}$, where $\alpha^{m,k}$ are real-valued predictable processes, such that $\left\{e^m_{s,\omega}\relmiddle|m\in\mathbb{N}\ \text{with}\ e^m_{s,\omega}\neq0\right\}$ is a complete orthonormal system of $U'_{s,\omega}$ for any $(s,\omega)\in[0,T]\times\Omega$. Then each $U'$-valued predictable process $\mathbb{H}$ can be expanded in $U'_{s,\omega}$ as
\begin{equation*}
	\mathbb{H}_{s,\omega}=\sum_{m\in\mathbb{N}}(\mathbb{H}_{s,\omega},e^m_{s,\omega})_{U'_{s,\omega}}e^m_{s,\omega}.
\end{equation*}
Note that, for each $m\in\mathbb{N}$, the process $(\mathbb{H}_s,e^m_s)_{U'_s}=\sum^m_{k=1}\alpha^{m,k}_s(\mathbb{H}_s,\delta_{x_k})_{U'_s}$ is real-valued and predictable and so is the process $|\mathbb{H}_s|^2_{U'_s}=\sum_{m\in\mathbb{N}}|(\mathbb{H}_s,e^m_s)_{U'_s}|^2$. See \cite{ic_Mikulevicius-Rozovskii_98} for further details.
\end{rem}
Define the set
\begin{equation*}
	L^2(\mathbb{M},U')=\left\{\mathbb{H}\relmiddle|\mathbb{H}\ \text{is a}\ U'\text{-valued predictable process s.t.}\ \bold{E}\left[\int^T_0|\mathbb{H}_s|^2_{U'_s}\,dA_s\right]<\infty\right\}.
\end{equation*}
Then $L^2(\mathbb{M},U')$ is a Hilbert space with the scalar product
\begin{equation*}
	(\mathbb{H},\mathbb{K})_\mathbb{M}=\bold{E}\left[\int^T_0(\mathbb{H}_s,\mathbb{K}_s)_{U'_s}\,dA_s\right],\ \mathbb{H},\mathbb{K}\in L^2(\mathbb{M},U').
\end{equation*}
Let $\|\cdot\|_\mathbb{M}$ denote the corresponding norm.
\begin{theo}[De Donno and Pratelli \cite{a_DeDonno-Pratelli_04}, De Donno \cite{a_DeDonno_07}]
For any $\mathbb{H}\in L^2(\mathbb{M},U')$, there exists a sequence of simple integrands $(H^n)_{n\in\mathbb{N}}$ such that $H^n_{t,\omega}$ converges to $\mathbb{H}_{t,\omega}$ in $U'_{t,\omega}$ for all $(t,\omega)\in[0,T]\times\Omega$ and $(H^n\bullet\mathbb{M})_{n\in\mathbb{N}}$ is a Cauchy sequence in $\mathcal{H}^2(\bold{P})$.
\end{theo}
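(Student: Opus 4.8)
The plan is to construct $(H^n)_{n\in\mathbb{N}}$ as suitably truncated partial sums of the expansion of $\mathbb{H}$ in the complete orthonormal system $\{e^m\}_{m\in\mathbb{N}}$ of Remark~\ref{remark: CONS}. Write $c^m_{s,\omega}:=(\mathbb{H}_{s,\omega},e^m_{s,\omega})_{U'_{s,\omega}}$; by Remark~\ref{remark: CONS} this is a real-valued predictable process, and Parseval's identity in $U'_{s,\omega}$ gives $|\mathbb{H}_{s,\omega}|^2_{U'_{s,\omega}}=\sum_{m\in\mathbb{N}}|c^m_{s,\omega}|^2$. Set $\mathbb{H}^{(n)}_{s,\omega}:=\sum_{m=1}^{n}c^m_{s,\omega}e^m_{s,\omega}$. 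Since $e^m_s=\sum_{k=1}^{m}\alpha^{m,k}_s\delta_{x_k}$, each $\mathbb{H}^{(n)}$ is a finite linear combination of Dirac measures with real-valued predictable coefficients $\gamma^{n,k}_s:=\sum_{m=k}^{n}c^m_s\alpha^{m,k}_s$, hence an element of $L^2(\mathbb{M},\mathcal{M})$ with $\|\mathbb{H}^{(n)}\|_{\mathbb{M}}^2=\bold{E}[\int_0^T\sum_{m=1}^{n}|c^m_s|^2\,dA_s]\le\|\mathbb{H}\|_{\mathbb{M}}^2$. Two facts are then immediate. First, because $\{e^m_{s,\omega}:e^m_{s,\omega}\neq0\}$ is a complete orthonormal system of $U'_{s,\omega}$, we have $\mathbb{H}^{(n)}_{t,\omega}\to\mathbb{H}_{t,\omega}$ in $U'_{t,\omega}$ for \emph{every} $(t,\omega)\in[0,T]\times\Omega$. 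Second, for $\ell\le n$ the orthonormality of the $e^m$ gives $\|\mathbb{H}^{(n)}-\mathbb{H}^{(\ell)}\|_{\mathbb{M}}^2\le\bold{E}[\int_0^T\sum_{m>\ell}|c^m_s|^2\,dA_s]$, and the right-hand side tends to $0$ as $\ell\to\infty$ by dominated convergence (with dominating function $|\mathbb{H}_s|^2_{U'_s}$, which is $\bold{P}\otimes dA$-integrable since $\mathbb{H}\in L^2(\mathbb{M},U')$); hence $(\mathbb{H}^{(n)})_n$ is Cauchy in $L^2(\mathbb{M},\mathcal{M})$, and by the It\^{o}-type isometry $H\mapsto H\bullet\mathbb{M}$ from $L^2(\mathbb{M},\mathcal{M})$ into $\mathcal{H}^2(\bold{P})$ established above, $(\mathbb{H}^{(n)}\bullet\mathbb{M})_n$ is Cauchy in $\mathcal{H}^2(\bold{P})$.

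If the $\mathbb{H}^{(n)}$ were themselves simple integrands the proof would be complete; in general, however, the orthonormalising processes $\alpha^{m,k}$, and hence the coefficients $\gamma^{n,k}$, need not be bounded. Merely invoking the density of simple integrands in $L^2(\mathbb{M},\mathcal{M})$ (the first Theorem of this section) would yield $L^2(\mathbb{M},\mathcal{M})$-convergence but not the pointwise-everywhere convergence in $U'$, so instead I would replace each $\mathbb{H}^{(n)}$ by an explicitly truncated simple integrand. For $N\in\mathbb{N}$ put $\mathbb{H}^{(n),N}_s:=\mathbb{H}^{(n)}_s\,\1_{\{\max_{1\le k\le n}|\gamma^{n,k}_s|\le N\}}$, which is a simple integrand since its $\delta_{x_k}$-coefficients $\gamma^{n,k}_s\,\1_{\{\max_{1\le j\le n}|\gamma^{n,j}_s|\le N\}}$ are predictable and bounded by $N$. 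For fixed $n$, as $N\to\infty$: at every $(t,\omega)$ the indicator equals $1$ once $N>\max_{1\le k\le n}|\gamma^{n,k}_{t,\omega}|$ (a finite number), so $\mathbb{H}^{(n),N}_{t,\omega}\to\mathbb{H}^{(n)}_{t,\omega}$ in $U'_{t,\omega}$; and $\|\mathbb{H}^{(n),N}-\mathbb{H}^{(n)}\|_{\mathbb{M}}\to0$ by dominated convergence, the integrand $\langle\mathbb{H}^{(n),N}_s-\mathbb{H}^{(n)}_s,\mathcal{Q}_s(\mathbb{H}^{(n),N}_s-\mathbb{H}^{(n)}_s)\rangle_{\mathcal{M},\mathcal{C}}=|\mathbb{H}^{(n)}_s|^2_{U'_s}\,\1_{\{\max_{1\le k\le n}|\gamma^{n,k}_s|>N\}}$ converging pointwise to $0$ and being dominated by $|\mathbb{H}_s|^2_{U'_s}$. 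One then picks $N_n\uparrow\infty$ with $\|\mathbb{H}^{(n),N_n}-\mathbb{H}^{(n)}\|_{\mathbb{M}}\le2^{-n}$ and sets $H^n:=\mathbb{H}^{(n),N_n}$; the triangle inequality for $\|\cdot\|_{\mathbb{M}}$ and the isometry then give that $(H^n\bullet\mathbb{M})_n$ is Cauchy in $\mathcal{H}^2(\bold{P})$.

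The step I expect to be the main obstacle is ensuring that the everywhere-convergence $H^n_{t,\omega}\to\mathbb{H}_{t,\omega}$ in $U'_{t,\omega}$ is not destroyed in this last selection: a truncation rate $N_n\uparrow\infty$ chosen from the \emph{global} $L^2(\mathbb{M},\mathcal{M})$-error need not satisfy $N_n\ge\max_{1\le k\le n}|\gamma^{n,k}_{t,\omega}|$ for all large $n$ at a \emph{fixed} $(t,\omega)$ --- the quantities $\max_{1\le k\le n}|\gamma^{n,k}_{t,\omega}|$ may grow in $n$ at a pointwise rate with no bound uniform over $(t,\omega)$ --- and then $H^n_{t,\omega}=0$ for infinitely many $n$, so convergence to a nonzero $\mathbb{H}_{t,\omega}$ would fail. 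Overcoming this requires finer control of the structure of the orthogonalising processes $\alpha^{m,k}$ of Remark~\ref{remark: CONS}, equivalently a predictable localisation compatible simultaneously with the $L^2(\mathbb{M},\mathcal{M})$-error and with the pointwise truncation thresholds, as carried out in \cite{a_DeDonno-Pratelli_04}, \cite{a_DeDonno_07} and \cite{ic_Mikulevicius-Rozovskii_98}; granting that refinement, the two convergences displayed in the first step conclude the proof.
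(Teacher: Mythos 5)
The paper does not actually prove this statement --- it is quoted verbatim from De Donno--Pratelli and De Donno, so there is no internal proof to compare against; your attempt has to stand on its own. Its first part is correct and is the standard core of the argument: expanding $\mathbb{H}$ in the predictable CONS $\{e^m\}$ of Remark \ref{remark: CONS}, the partial sums $\mathbb{H}^{(n)}_s=\sum_{m\le n}(\mathbb{H}_s,e^m_s)_{U'_s}e^m_s$ are finite linear combinations of $\delta_{x_1},\dots,\delta_{x_n}$ with predictable coefficients, converge to $\mathbb{H}_{t,\omega}$ in $U'_{t,\omega}$ at every $(t,\omega)$ by elementary Hilbert-space theory, and form a Cauchy sequence in $L^2(\mathbb{M},\mathcal{M})$ by Bessel's inequality and dominated convergence, so that the isometry makes $(\mathbb{H}^{(n)}\bullet\mathbb{M})_n$ Cauchy in $\mathcal{H}^2(\bold{P})$. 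All of these computations check out.

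The problem is exactly the one you flag in your last paragraph, and it is a genuine gap rather than a technicality: the $\mathbb{H}^{(n)}$ are \emph{not} simple integrands in the sense of Section \ref{section: cylindrical mart}, because the Gram--Schmidt coefficients $\alpha^{m,k}$, hence your $\gamma^{n,k}_s=\sum_{m=k}^{n}(\mathbb{H}_s,e^m_s)_{U'_s}\alpha^{m,k}_s$, are in general unbounded, and no bound on $|\mathbb{H}^{(n)}_s|_{U'_s}$ controls them. Your truncation $\mathbb{H}^{(n),N}$ repairs boundedness and the $L^2(\mathbb{M},\mathcal{M})$/$\mathcal{H}^2(\bold{P})$ Cauchy property via a diagonal choice $N_n\uparrow\infty$, but, as you yourself observe, a deterministic threshold sequence cannot guarantee $N_n\ge\max_{1\le k\le n}|\gamma^{n,k}_{t,\omega}|$ eventually at every fixed $(t,\omega)$, since these maxima need not be bounded in $n$ pointwise (indeed $\mathbb{H}_{t,\omega}$ need not be a measure, so the Dirac-coefficients of $\mathbb{H}^{(n)}_{t,\omega}$ typically blow up). At such a point $H^n_{t,\omega}=0$ infinitely often and the claimed everywhere-convergence in $U'_{t,\omega}$ fails. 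A Borel--Cantelli choice of $N_n$ would salvage $dA_t\otimes d\bold{P}$-a.e.\ convergence, but the theorem asserts convergence for \emph{all} $(t,\omega)$, and rescaling instead of cutting off runs into the same obstruction. Closing this requires additional input on the structure of the orthonormalizing processes (the localization carried out in the cited references), which your proposal invokes but does not supply; as written, the proof is therefore incomplete at precisely the step you identified.
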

As a consequence, we can define the stochastic integral $\mathbb{H}\bullet\mathbb{M}=\int^\cdot_0\mathbb{H}_s\,d\mathbb{M}_s$ as the limit of the sequence $(H^n\bullet\mathbb{M})_{n\in\mathbb{N}}$. The mapping $L^2(\mathbb{M},U')\ni\mathbb{H}\mapsto\mathbb{H}\bullet\mathbb{M}\in\mathcal{H}^2(\bold{P})$ is linear. It holds that
\begin{equation*}
	\langle\mathbb{H}\bullet\mathbb{M},\mathbb{K}\bullet\mathbb{M}\rangle_t=\int^t_0(\mathbb{H}_s,\mathbb{K}_s)_{U'_s}\,dA_s
\end{equation*}
and hence
\begin{equation*}
	\|\mathbb{H}\bullet\mathbb{M}\|_{\mathcal{H}^2(\bold{P})}=\|\mathbb{H}\|_\mathbb{M}
\end{equation*}
for any $\mathbb{H},\mathbb{K}\in L^2(\mathbb{M},U')$.
\begin{cor}
The stable subspace generated by $\mathbb{M}=(M^x)_{x\in X}$ in the set of square-integrable martingales coincides with the set of stochastic integrals $\{\mathbb{H}\bullet\mathbb{M}\ |\ \mathbb{H}\in L^2(\mathbb{M},U')\}$. As a consequence, for any $\xi\in L^2(\mathcal{F}_T,\bold{P})$, there exist unique $\mathbb{H}\in L^2(\mathbb{M},U')$ and $N\in\mathcal{H}^2(\bold{P})$ such that $N_0=0$, $N$ is strongly orthogonal to $\mathbb{M}$, and
\begin{equation*}
	\xi=\bold{E}[\xi]+\int^T_0\mathbb{H}_s\,d\mathbb{M}_s+N_T.
\end{equation*}
\end{cor}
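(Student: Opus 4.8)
Denote by $\mathcal{H}^2_0(\bold{P})$ the closed subspace of $\mathcal{H}^2(\bold{P})$ consisting of martingales null at $0$, equipped with the inner product $(N,N')\mapsto\bold{E}[N_TN'_T]$, and set $\mathcal{I}=\{\mathbb{H}\bullet\mathbb{M}\mid\mathbb{H}\in L^2(\mathbb{M},U')\}\subseteq\mathcal{H}^2_0(\bold{P})$. The plan is to prove that $\mathcal{I}$ is a closed stable subspace of $\mathcal{H}^2_0(\bold{P})$ that contains $M^x-M^x_0$ for every $x\in X$, and conversely that $\mathcal{I}$ is contained in the stable subspace $\mathcal{S}(\mathbb{M})$ generated by $\mathbb{M}$; the decomposition of $\xi$ will then be the Galtchouk--Kunita--Watanabe decomposition associated with the orthogonal splitting $\mathcal{H}^2_0(\bold{P})=\mathcal{I}\oplus\mathcal{I}^\perp$.

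First I would use the isometry $\|\mathbb{H}\bullet\mathbb{M}\|_{\mathcal{H}^2(\bold{P})}=\|\mathbb{H}\|_\mathbb{M}$ established above: since $L^2(\mathbb{M},U')$ is complete, its isometric image $\mathcal{I}$ is a closed linear subspace. For stability, given a bounded real-valued predictable process $K$, the pointwise identity $|K_s\mathbb{H}_s|_{U'_s}=|K_s|\,|\mathbb{H}_s|_{U'_s}$ shows that $K\mathbb{H}\in L^2(\mathbb{M},U')$, and the associativity $K\bullet(\mathbb{H}\bullet\mathbb{M})=(K\mathbb{H})\bullet\mathbb{M}$, which is clear for simple integrands and passes to the limit by the isometry and the approximation theorem above, shows that $\mathcal{I}$ is stable under integration by bounded predictable processes, hence in particular under stopping. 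Since each $\delta_x$ is a simple integrand with $\bold{E}\int^T_0|\delta_x|^2_{U'_s}\,dA_s=\bold{E}\int^T_0\mathcal{Q}_s(x,x)\,dA_s=\bold{E}[\langle M^x\rangle_T]<\infty$, we have $\delta_x\in L^2(\mathbb{M},U')$ and $M^x-M^x_0=\delta_x\bullet\mathbb{M}\in\mathcal{I}$. As $\mathcal{S}(\mathbb{M})$ is by definition the smallest closed stable subspace of $\mathcal{H}^2_0(\bold{P})$ containing all $M^x-M^x_0$, it follows that $\mathcal{S}(\mathbb{M})\subseteq\mathcal{I}$.

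For the reverse inclusion I would fix $\mathbb{H}\in L^2(\mathbb{M},U')$ and pick simple integrands $H^n$ as in the approximation theorem, so that $H^n\bullet\mathbb{M}\to\mathbb{H}\bullet\mathbb{M}$ in $\mathcal{H}^2(\bold{P})$; each $H^n\bullet\mathbb{M}=\sum_i h^{n,i}\bullet M^{x_i}$ is a finite combination of integrals of bounded predictable processes against the martingales $M^{x_i}$, hence lies in $\mathcal{S}(\mathbb{M})$, and since $\mathcal{S}(\mathbb{M})$ is closed the limit $\mathbb{H}\bullet\mathbb{M}$ lies in $\mathcal{S}(\mathbb{M})$ as well; thus $\mathcal{I}=\mathcal{S}(\mathbb{M})$. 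For the consequence, given $\xi\in L^2(\mathcal{F}_T,\bold{P})$ I would apply the orthogonal decomposition $\mathcal{H}^2_0(\bold{P})=\mathcal{I}\oplus\mathcal{I}^\perp$ to the martingale $(\bold{E}[\xi\mid\mathcal{F}_t])_{t\in[0,T]}-\bold{E}[\xi]$, obtaining a representation $\mathbb{H}\bullet\mathbb{M}+N$ with $\mathbb{H}\in L^2(\mathbb{M},U')$, $N\in\mathcal{I}^\perp$ and $N_0=0$; evaluating at $t=T$ yields $\xi=\bold{E}[\xi]+\int^T_0\mathbb{H}_s\,d\mathbb{M}_s+N_T$. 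Since $\mathcal{I}$ is a stable subspace containing each $M^x-M^x_0$, membership in $\mathcal{I}^\perp$ is equivalent to strong orthogonality to every $M^x$, hence to $\mathbb{M}$; and uniqueness follows from $\mathcal{I}\cap\mathcal{I}^\perp=\{0\}$ together with the injectivity of the isometry $\mathbb{H}\mapsto\mathbb{H}\bullet\mathbb{M}$.

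I expect the main obstacle to be the two structural facts underlying the above: that $\mathcal{I}$ genuinely is a stable subspace (which reduces to the associativity of the generalized stochastic integral under bounded predictable integration) and that, for a stable subspace of $\mathcal{H}^2(\bold{P})$, orthogonality in the $\mathcal{H}^2$ inner product coincides with strong orthogonality of martingales. Both are classical in the finite-dimensional theory but need to be checked to carry over to the cylindrical setting; once they are granted, the remainder is just the Hilbert-space projection theorem together with the isometry.
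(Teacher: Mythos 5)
Your proposal is correct. Note that the paper itself gives no proof of this corollary: Section \ref{section: cylindrical mart} is expository, and the statement is quoted as a known consequence of the integration theory of De Donno--Pratelli and Mikulevicius--Rozovskii. Your argument is the standard Galtchouk--Kunita--Watanabe derivation and it does go through in this setting: the isometry $\|\mathbb{H}\bullet\mathbb{M}\|_{\mathcal{H}^2(\bold{P})}=\|\mathbb{H}\|_{\mathbb{M}}$ together with completeness of $L^2(\mathbb{M},U')$ makes $\mathcal{I}$ closed; $(K\mathbb{H})\bullet\mathbb{M}=K\bullet(\mathbb{H}\bullet\mathbb{M})$ holds for simple integrands by one-dimensional associativity and extends by the isometry, giving stability; and the density of simple integrals in $\mathcal{I}$ reduces the identification of $\mathcal{I}^{\perp}$ with $\{N:\langle N,M^x\rangle\equiv 0\ \forall x\}$ to the classical stopping-time lemma (for one direction, $\bold{E}[N_\tau L_\tau]=0$ for all stopping times $\tau$ and all $L\in\mathcal{I}$ because $L^\tau\in\mathcal{I}$; for the other, $\langle N,H\bullet\mathbb{M}\rangle=\sum_i\int h^i\,d\langle N,M^{x_i}\rangle=0$ for simple $H$). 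The only point worth making explicit is that the normalization $N_0=0$ together with $\xi=\bold{E}[\xi]+(\mathbb{H}\bullet\mathbb{M})_T+N_T$ forces $\bold{E}[\xi\mid\mathcal{F}_0]=\bold{E}[\xi]$, so the statement (and your proof, which projects the martingale $\bold{E}[\xi\mid\mathcal{F}_t]-\bold{E}[\xi]$ inside $\mathcal{H}^2_0(\bold{P})$) tacitly assumes $\mathcal{F}_0$ is $\bold{P}$-trivial; this convention is used throughout the paper (e.g.\ in (\ref{finite-dim decomposition})), so it is not a defect of your argument.
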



\section{BSDEs driven by cylindrical martingales on the space of continuous functions}\label{section: BSDE}
\paragraph{}
In this section, we consider backward stochastic differential equations (BSDEs) driven by square-integrable cylindrical martingales on $\mathcal{C}$. We keep the notations in Section \ref{section: cylindrical mart} and further impose the following assumption.
\begin{assum}\label{assumption: A is conti}
The predictable process $A$ in Assumption \ref{assumption: cylindrical mart} is continuous.
\end{assum}
For a given data $(f,\xi)$ specified below, we consider a BSDE of the form
\begin{equation}\label{cylindricalBSDE}
	Y_t=\xi+\int^T_tf(s,Y_s,\mathbb{H}_s)\,dA_s-\int^T_t\mathbb{H}_s\,d\mathbb{M}_s-\int^T_tdN_s,\ t\in[0,T].
\end{equation}
To refer to this BSDE, we use the notation $\text{BSDE}(f,\xi)$. Here, the data $(f,\xi)$ consists of a driver $f$ and a terminal condition $\xi$ satisfying the following assumptions.
\begin{assum}\label{assumption: BSDEdata}
\begin{enumerate}
\renewcommand{\labelenumi}{(\roman{enumi})}
\item
The driver $f$ is a real-valued function on $\{(\omega,s,y,h)\ |\ \omega\in\Omega,\ s\in[0,T],\ y\in\mathbb{R},\ h\in U'_{s,\omega}\}$ such that;
\begin{itemize}
\item
for any $U'$-valued predictable process $\mathbb{K}$, the function $(\omega,s,y)\mapsto f(\omega,s,y,\mathbb{K}_{s,\omega})$ is $\mathcal{P}\otimes\mathcal{B}(\mathbb{R})$-measurable, and
\item
there exist positive predictable processes $\eta$ and $\theta$ such that
\begin{equation}\label{Lipschitz}
	|f(t,y_1,\mathbb{K}^1_t)-f(t,y_2,\mathbb{K}^2_t)|\leq\eta_t|y_1-y_2|+\theta_t|\mathbb{K}^1_t-\mathbb{K}^2_t|_{U'_t}\ dA_t\otimes d\bold{P}\text{-a.e.}
\end{equation}
for any $y_1,y_2\in\mathbb{R}$ and $U'$-valued processes $\mathbb{K}^1,\mathbb{K}^2$.
\end{itemize}
\item
The terminal condition $\xi$ is a real-valued and $\mathcal{F}_T$-measurable random variable.
\end{enumerate}
\end{assum}
We define $\alpha_t=\sqrt{\eta_t+\theta^2_t}>0$ and $K_t=\int^t_0\alpha^2_s\,dA_s$. Note that both $\alpha$ and $K$ are predictable, and $K$ is continuous and nondecreasing. For each $\beta\geq0$, define the following spaces;
\begin{align*}
	L^2_\beta&=\left\{\zeta\in L^2(\mathcal{F}_T,\bold{P})\relmiddle|\|\zeta\|^2_\beta=\bold{E}\left[e^{\beta K_T}|\zeta|^2\right]<\infty\right\},\\
	L^2_{T,\beta}&=\left\{\phi\relmiddle|
\begin{gathered}
\phi\ \text{is a real-valued and \cadlag\ adapted process s.t.}\\
\|\phi\|^2_{T,\beta}=\bold{E}\left[\int^T_0e^{\beta K_t}|\phi_t|^2\,dA_t\right]<\infty
\end{gathered}
\right\},\\
	L^2_\beta(\mathbb{M},U')&=\left\{\mathbb{K}\in L^2(\mathbb{M},U')\relmiddle|\|\mathbb{K}\|^2_{\mathbb{M},\beta}=\bold{E}\left[\int^T_0e^{\beta K_t}|\mathbb{K}_t|^2_{U'_t}\,dA_t\right]<\infty\right\},\\
	\mathcal{H}^2_\beta(\bold{P})&=\left\{L\in\mathcal{H}^2(\bold{P})\relmiddle|\|L\|^2_{\mathcal{H}^2_\beta(\bold{P})}=\bold{E}\left[\int^T_0e^{\beta K_t}\,d[L]_t\right]<\infty\right\},\\
	\mathcal{S}_\beta&=\left\{(\phi,\mathbb{K},L)\relmiddle|\alpha\phi\in L^2_{T,\beta},\ \mathbb{K}\in L^2_\beta(\mathbb{M},U'),\ \text{and}\ L\in\mathcal{H}^2_\beta(\bold{P})\right\}.
\end{align*}
\begin{rem}\label{remark: BSDE}
\begin{enumerate}
\renewcommand{\labelenumi}{(\roman{enumi})}
\item
If the process $K=\int^\cdot_0\alpha^2_s\,dA_s$ is uniformly bounded, then all the spaces $L^2_\beta$ (resp.\ $L^2_{T,\beta}$, $L^2_\beta(\mathbb{M},U')$, $\mathcal{H}^2_\beta(\bold{P})$) coincide with $L^2$ (resp.\ $L^2_T=L^2_{T,0}$, $L^2(\mathbb{M},U')$, $\mathcal{H}^2(\bold{P})$) and all the norms are equivalent.
\item
For any $\beta\geq0$, we can define the stochastic integral $\mathbb{K}\bullet\mathbb{M}\in\mathcal{H}^2(\bold{P})$ for $\mathbb{K}\in L^2_\beta(\mathbb{M},U')$. Moreover, it can be shown that $\mathbb{K}\bullet\mathbb{M}\in\mathcal{H}^2_\beta(\bold{P})$.
\item
For any $L\in\mathcal{H}^2_\beta(\bold{P})$, since $[L]-\langle L\rangle$ is a martingale and $K$ is predictable, we have that
\begin{equation*}
	\bold{E}\left[\int^T_0e^{\beta K_t}\,d[L]_t\right]=\bold{E}\left[\int^T_0e^{\beta K_t}\,d\langle L\rangle_t\right]<\infty.
\end{equation*}
\item
The set $\mathcal{S}_\beta$ is a Hilbert space with the norm
\begin{equation*}
	\|(\phi,\mathbb{K},L)\|^2_{\mathcal{S}_\beta}=\|\alpha\phi\|^2_{T,\beta}+\|\mathbb{K}\|^2_{\mathbb{M},\beta}+\|L\|^2_{\mathcal{H}^2_\beta(\bold{P})}.
\end{equation*}
\end{enumerate}
\end{rem}
\begin{defi}
Fix any $\beta\geq0$. A data $(f,\xi)$ satisfying Assumption \ref{assumption: BSDEdata} is called $\beta$-standard if $\alpha^{-1}f(\cdot,0,0)\in L^2_{T,\beta}$ and $\xi\in L^2_\beta$.
\end{defi}
\begin{defi}
For a $\beta$-standard data $(f,\xi)$ with $\beta\geq0$, a triple $(Y,\mathbb{H},N)$ in $\mathcal{S}_\beta$ is called a solution of $\text{BSDE}(f,\xi)$ if $N_0=0$, $\langle N,M^x\rangle\equiv0$ for all $x\in X$, and (\ref{cylindricalBSDE}) holds $\bold{P}$-a.s.
\end{defi}
The following theorem is our first main result about the existence and uniqueness of the solution of the infinite-dimensional BSDE (\ref{cylindricalBSDE}).
\begin{theo}\label{cylindricalBSDE existence and uniqueness}
Let $\beta>3$. For a $\beta$-standard data $(f,\xi)$, there exists a unique solution $(Y,\mathbb{H},N)$ of $\text{BSDE}(f,\xi)$ in $\mathcal{S}_\beta$.
\end{theo}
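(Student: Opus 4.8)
The plan is to realize the solution of $\text{BSDE}(f,\xi)$ as the fixed point of a contraction on the Hilbert space $\mathcal{S}_\beta$. Fix $\beta>3$. Given $(Y,\mathbb{H},N)\in\mathcal{S}_\beta$, Assumption \ref{assumption: BSDEdata} together with the $\beta$-standardness of $(f,\xi)$ makes $\xi+\int_0^T f(s,Y_s,\mathbb{H}_s)\,dA_s$ a square-integrable $\mathcal{F}_T$-measurable random variable; by the martingale representation corollary at the end of Section \ref{section: cylindrical mart} there exist unique $\bar{\mathbb{H}}\in L^2(\mathbb{M},U')$ and $\bar N\in\mathcal{H}^2(\bold{P})$ with $\bar N_0=0$ and $\langle\bar N,M^x\rangle\equiv0$ for all $x\in X$ such that, setting $\bar Y_t:=\bold{E}[\,\xi+\int_t^T f(s,Y_s,\mathbb{H}_s)\,dA_s\mid\mathcal{F}_t\,]$, the triple $(\bar Y,\bar{\mathbb{H}},\bar N)$ is \cadlag\ adapted and satisfies
\[
\bar Y_t=\xi+\int_t^T f(s,Y_s,\mathbb{H}_s)\,dA_s-\int_t^T\bar{\mathbb{H}}_s\,d\mathbb{M}_s-\int_t^T d\bar N_s,\qquad t\in[0,T].
\]
Write $\Phi(Y,\mathbb{H},N):=(\bar Y,\bar{\mathbb{H}},\bar N)$. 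Since $\bar{\mathbb{H}}\bullet\mathbb{M}$ and $\bar N$ are martingales vanishing at $0$, conditioning on $\mathcal{F}_t$ shows that a triple in $\mathcal{S}_\beta$ is a solution of $\text{BSDE}(f,\xi)$ if and only if it is a fixed point of $\Phi$.

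The heart of the argument is an energy estimate obtained by applying It\^o's formula to $t\mapsto e^{\beta K_t}|\bar Y_t|^2$, recalling $dK_s=\alpha_s^2\,dA_s$. Using that $[\bar{\mathbb{H}}\bullet\mathbb{M}+\bar N]-\langle\bar{\mathbb{H}}\bullet\mathbb{M}+\bar N\rangle$ is a martingale, that $\bar N$ is strongly orthogonal to $\bar{\mathbb{H}}\bullet\mathbb{M}$, that $\langle\bar{\mathbb{H}}\bullet\mathbb{M}\rangle_s=\int_0^s|\bar{\mathbb{H}}_u|^2_{U'_u}\,dA_u$ by the construction of the stochastic integral in Section \ref{section: cylindrical mart}, and that $\bar Y_T=\xi$, and taking expectations (the local-martingale terms vanishing after localization), one obtains
\begin{align*}
&\bold{E}[|\bar Y_0|^2]+\beta\|\alpha\bar Y\|^2_{T,\beta}+\|\bar{\mathbb{H}}\|^2_{\mathbb{M},\beta}+\|\bar N\|^2_{\mathcal{H}^2_\beta(\bold{P})}\\
&\qquad=\|\xi\|^2_\beta+2\bold{E}\left[\int_0^T e^{\beta K_s}\bar Y_s\,f(s,Y_s,\mathbb{H}_s)\,dA_s\right],
\end{align*}
where continuity of $A$, hence of $K$, has been used to replace $\bar Y_{s-}$ by $\bar Y_s$. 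Bounding the right-hand side through Young's inequality (splitting $|f(s,Y_s,\mathbb{H}_s)|\le|f(s,0,0)|+\eta_s|Y_s|+\theta_s|\mathbb{H}_s|_{U'_s}$ and using the elementary bounds $\eta_s\le\alpha_s^2$ and $\theta_s\le\alpha_s$) and invoking $\alpha^{-1}f(\cdot,0,0)\in L^2_{T,\beta}$ and $\xi\in L^2_\beta$ shows that $\Phi$ maps $\mathcal{S}_\beta$ into itself.

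For the contraction, let $(\bar Y^i,\bar{\mathbb{H}}^i,\bar N^i)=\Phi(Y^i,\mathbb{H}^i,N^i)$ for $i=1,2$ and write $\Delta\bar Y=\bar Y^1-\bar Y^2$, $\Delta\bar{\mathbb{H}}=\bar{\mathbb{H}}^1-\bar{\mathbb{H}}^2$, $\Delta\bar N=\bar N^1-\bar N^2$, and $\Delta f_s=f(s,Y^1_s,\mathbb{H}^1_s)-f(s,Y^2_s,\mathbb{H}^2_s)$. Applying the identity above to the difference — the terminal term disappears because $\bar Y^1_T=\bar Y^2_T=\xi$ — its right-hand side becomes $2\bold{E}[\int_0^T e^{\beta K_s}\Delta\bar Y_s\,\Delta f_s\,dA_s]$. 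By \eqref{Lipschitz}, $|\Delta f_s|\le\eta_s|Y^1_s-Y^2_s|+\theta_s|\mathbb{H}^1_s-\mathbb{H}^2_s|_{U'_s}$, and Young's inequality with a parameter $\lambda>0$ (again using $\eta_s\le\alpha_s^2$, $\theta_s\le\alpha_s$) bounds the integrand pointwise by $2\lambda\alpha_s^2|\Delta\bar Y_s|^2+\tfrac1\lambda\alpha_s^2|Y^1_s-Y^2_s|^2+\tfrac1\lambda|\mathbb{H}^1_s-\mathbb{H}^2_s|^2_{U'_s}$. Absorbing the $\Delta\bar Y$ contribution into the $\beta$-term and dropping $\bold{E}[|\Delta\bar Y_0|^2]\ge0$ yields
\begin{align*}
&(\beta-2\lambda)\|\alpha\Delta\bar Y\|^2_{T,\beta}+\|\Delta\bar{\mathbb{H}}\|^2_{\mathbb{M},\beta}+\|\Delta\bar N\|^2_{\mathcal{H}^2_\beta(\bold{P})}\\
&\qquad\le\tfrac1\lambda\Bigl(\|\alpha(Y^1-Y^2)\|^2_{T,\beta}+\|\mathbb{H}^1-\mathbb{H}^2\|^2_{\mathbb{M},\beta}\Bigr)\le\tfrac1\lambda\bigl\|(Y^1,\mathbb{H}^1,N^1)-(Y^2,\mathbb{H}^2,N^2)\bigr\|^2_{\mathcal{S}_\beta}.
\end{align*}
For $\beta>3$ one may pick $\lambda\in(1,(\beta-1)/2]$, so that $\beta-2\lambda\ge1$; then the left-hand side dominates $\bigl\|\Phi(Y^1,\mathbb{H}^1,N^1)-\Phi(Y^2,\mathbb{H}^2,N^2)\bigr\|^2_{\mathcal{S}_\beta}$ while the contraction constant $\tfrac1\lambda$ is strictly less than $1$. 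Banach's fixed-point theorem then gives the unique solution of $\text{BSDE}(f,\xi)$ in $\mathcal{S}_\beta$.

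The fixed-point scheme itself is the classical El Karoui--Peng--Quenez argument, so I expect the real work to be the infinite-dimensional and stochastic-coefficient bookkeeping rather than the contraction. Three points need care. First, the well-definedness of $\Phi$: that $\int_0^T f(s,Y_s,\mathbb{H}_s)\,dA_s\in L^2(\mathcal{F}_T,\bold{P})$ and that the image triple genuinely lies in $\mathcal{S}_\beta$ — this is where $\beta$-standardness together with the bounds $\eta_s\le\alpha_s^2$, $\theta_s\le\alpha_s$ on the random Lipschitz coefficients enter, and one should check these estimates respect the weights $e^{\beta K}$. Second, justifying It\^o's formula for the \cadlag\ process $\bar Y$ built from the cylindrical stochastic integral $\bar{\mathbb{H}}\bullet\mathbb{M}$; here Assumption \ref{assumption: A is conti} (continuity of $A$, hence of $K$) is essential, since it makes the jump contributions to the $dA_s$-integrals negligible and lets one replace $\bar Y_{s-}$ by $\bar Y_s$. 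Third, verifying that the local-martingale terms produced by It\^o's formula are genuine martingales, which is a routine localization using the a priori bounds. Once these are settled, the threshold $\beta>3$ is forced precisely by the constraint $1<\lambda\le(\beta-1)/2$ needed to make the contraction constant strictly below $1$.
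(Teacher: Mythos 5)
Your proposal is correct and follows essentially the same route as the paper: the same fixed-point map $\Phi$ built from the martingale representation with frozen driver, the same It\^o computation for $e^{\beta K_t}|\cdot|^2$, the same Young-inequality splitting with $\eta_s\le\alpha_s^2$, $\theta_s\le\alpha_s$, and the same choice of parameter ($\lambda=\mu^2=(\beta-1)/2$) yielding contraction constant $2/(\beta-1)<1$ for $\beta>3$. The three "points needing care" you flag are exactly what the paper isolates as Lemma \ref{BSDE lemma1} (well-definedness of $\Phi$ and membership of the image in $\mathcal{S}_\beta$) and Lemma \ref{lemma: mart} (the a priori bound $\sup_t e^{\beta K_t/2}|Y_t|\in L^2(\bold{P})$ making the local-martingale terms genuine martingales).
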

Theorem \ref{cylindricalBSDE existence and uniqueness} is proved by a slight modification of the arguments in Carbone, Ferrario, and Santacrose \cite{a_Carbone-_07} and El Karoui and Huang \cite{ic_ElKaroui-Huang_97}. For completeness, we give the proof of Theorem \ref{cylindricalBSDE existence and uniqueness} by applying \cite{a_Carbone-_07} in our setting.\par
Firstly, consider the case when a driver $f$ depends neither on $Y$ nor on $\mathbb{K}$.
\begin{lemm}\label{BSDE lemma1}
Fix any $\beta>0$ and let $\xi\in L^2_\beta$ and $f(\omega,t,\cdot,\cdot)\equiv g(\omega,t)$ with $\alpha^{-1}g\in L^2_{T,\beta}$. Then there exists a unique solution of $\text{BSDE}(f,\xi)$ in $\mathcal{S}_\beta$.
\end{lemm}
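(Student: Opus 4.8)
The plan is to construct the solution explicitly from the martingale representation corollary at the end of Section~\ref{section: cylindrical mart}, and then to verify that this candidate lies in $\mathcal{S}_\beta$ by a weighted energy estimate. \textbf{Step 1 (construction).} Put $\zeta:=\xi+\int^T_0 g_s\,dA_s$. First I would check $\zeta\in L^2(\mathcal{F}_T,\bold{P})$: since $e^{\beta K_s}\geq1$ and $\int^T_0\alpha^2_s e^{-\beta K_s}\,dA_s=\int^T_0 e^{-\beta K_s}\,dK_s=\beta^{-1}(1-e^{-\beta K_T})\leq\beta^{-1}$, the Cauchy--Schwarz inequality gives $\bigl(\int^T_0|g_s|\,dA_s\bigr)^2\leq\beta^{-1}\int^T_0 e^{\beta K_s}\alpha^{-2}_s|g_s|^2\,dA_s$, hence $\bold{E}\bigl[(\int^T_0|g_s|\,dA_s)^2\bigr]\leq\beta^{-1}\|\alpha^{-1}g\|^2_{T,\beta}<\infty$; together with $\xi\in L^2_\beta\subseteq L^2(\mathcal{F}_T,\bold{P})$ this yields $\zeta\in L^2(\mathcal{F}_T,\bold{P})$. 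By the corollary in Section~\ref{section: cylindrical mart} there are unique $\mathbb{H}\in L^2(\mathbb{M},U')$ and $N\in\mathcal{H}^2(\bold{P})$ with $N_0=0$, $N$ strongly orthogonal to $\mathbb{M}$ (i.e.\ $\langle N,M^x\rangle\equiv0$ for all $x\in X$), and $\zeta=\bold{E}[\zeta]+\int^T_0\mathbb{H}_s\,d\mathbb{M}_s+N_T$. Define $Y_t:=\bold{E}[\zeta\mid\mathcal{F}_t]-\int^t_0 g_s\,dA_s=\bold{E}[\xi+\int^T_t g_s\,dA_s\mid\mathcal{F}_t]$, using a \cadlag\ version of the conditional-expectation martingale; then $Y$ is \cadlag\ adapted, $Y_T=\xi$, and since $\bold{E}[\zeta\mid\mathcal{F}_t]=\bold{E}[\zeta]+\int^t_0\mathbb{H}_s\,d\mathbb{M}_s+N_t$, a direct rearrangement shows that $(Y,\mathbb{H},N)$ satisfies (\ref{cylindricalBSDE}) with driver $g$.

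\textbf{Step 2 (membership in $\mathcal{S}_\beta$).} It remains to check $\alpha Y\in L^2_{T,\beta}$, $\mathbb{H}\in L^2_\beta(\mathbb{M},U')$ and $N\in\mathcal{H}^2_\beta(\bold{P})$. Since $K$ is continuous by Assumption~\ref{assumption: A is conti}, so is $e^{\beta K}$; I would apply Itô's formula to $e^{\beta K_s}|Y_s|^2$, using $dY_s=-g_s\,dA_s+\mathbb{H}_s\,d\mathbb{M}_s+dN_s$, the identity $\langle\mathbb{H}\bullet\mathbb{M}\rangle_s=\int^s_0|\mathbb{H}_r|^2_{U'_r}\,dA_r$ and the strong orthogonality of $N$ to $\mathbb{M}$. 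Localizing by $\rho_n:=\inf\{t:K_t\geq n\}\wedge T$ (also chosen to reduce the local-martingale part), integrating over $[0,\rho_n]$, taking expectations, moving all non-negative terms to the left and the data to the right, applying Young's inequality $2|Y_s g_s|=2|\alpha_s Y_s|\,|\alpha^{-1}_s g_s|\leq\tfrac{\beta}{2}\alpha^2_s|Y_s|^2+\tfrac{2}{\beta}\alpha^{-2}_s|g_s|^2$ to the resulting $\int e^{\beta K}Y g\,dA$ term and absorbing part of it on the left (here $\beta>0$ is essential: it leaves room to absorb the $|Y|^2\,dK$-term), and finally discarding the non-negative terminal term $\bold{E}[e^{\beta K_{\rho_n}}|Y_{\rho_n}|^2]$, one obtains
\begin{equation*}
\frac{\beta}{2}\,\bold{E}\!\left[\int^{\rho_n}_0 e^{\beta K_s}|Y_s|^2\,dK_s\right]+\bold{E}\!\left[\int^{\rho_n}_0 e^{\beta K_s}|\mathbb{H}_s|^2_{U'_s}\,dA_s\right]+\bold{E}\!\left[\int^{\rho_n}_0 e^{\beta K_s}\,d\langle N\rangle_s\right]\leq\bold{E}\bigl[|Y_0|^2\bigr]+\frac{2}{\beta}\|\alpha^{-1}g\|^2_{T,\beta}.
\end{equation*}
The right-hand side is finite (note $\bold{E}[|Y_0|^2]\leq\bold{E}[\zeta^2]<\infty$) and independent of $n$; letting $n\to\infty$ (so $\rho_n\uparrow T$) by monotone convergence, and using Remark~\ref{remark: BSDE}(iii) for the $N$-term, gives $\|\alpha Y\|^2_{T,\beta}+\|\mathbb{H}\|^2_{\mathbb{M},\beta}+\|N\|^2_{\mathcal{H}^2_\beta(\bold{P})}<\infty$, i.e.\ $(Y,\mathbb{H},N)\in\mathcal{S}_\beta$.

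\textbf{Step 3 (uniqueness).} If $(Y,\mathbb{H},N)\in\mathcal{S}_\beta$ is any solution, then $\mathbb{H}\in L^2(\mathbb{M},U')$ and $N\in\mathcal{H}^2(\bold{P})$ (as $e^{\beta K}\geq1$), so $\mathbb{H}\bullet\mathbb{M}$ and $N$ are genuine martingales; taking $\bold{E}[\,\cdot\mid\mathcal{F}_t]$ on both sides of (\ref{cylindricalBSDE}) forces $Y_t=\bold{E}[\zeta\mid\mathcal{F}_t]-\int^t_0 g_s\,dA_s$, whence $\int^T_0\mathbb{H}_s\,d\mathbb{M}_s+N_T=\zeta-\bold{E}[\zeta]$, and the uniqueness statement of the corollary determines $\mathbb{H}$ and $N$.

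\textbf{Main obstacle.} Steps~1 and~3 are routine. The delicate point is the localization in Step~2: since $(Y,\mathbb{H},N)$ is not a priori known to lie in the weighted spaces, one must justify that the stopped local martingale has zero expectation and that the passage $n\to\infty$ is legitimate. The stopping time $\rho_n$ keeps $e^{\beta K}$ bounded on $[0,\rho_n]$, and together with $\bold{E}[\sup_t|Y_t|^2]<\infty$ (Doob's inequality and the $L^2$-bounds from Step~1), $\mathbb{H}\in L^2(\mathbb{M},U')$ and $N\in\mathcal{H}^2(\bold{P})$ this makes all the stopped quantities integrable; discarding the non-negative terminal term then removes any difficulty in taking the limit.
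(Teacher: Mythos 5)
Your Step 1 (construction via the martingale representation of $\zeta=\xi+\int_0^Tg_s\,dA_s$) and Step 3 (uniqueness by conditioning) are correct and coincide with what the paper does. The problem is in Step 2, and it is not merely a technicality of localization: the energy identity you use has the wrong orientation. Applying It\^o's formula to $e^{\beta K_s}|Y_s|^2$ forward over $[0,\rho_n]$ and taking expectations gives
\begin{equation*}
\bold{E}\bigl[e^{\beta K_{\rho_n}}Y_{\rho_n}^2\bigr]-\bold{E}\bigl[Y_0^2\bigr]
=\beta\,\bold{E}\!\left[\int_0^{\rho_n}e^{\beta K_s}Y_s^2\,dK_s\right]
-2\,\bold{E}\!\left[\int_0^{\rho_n}e^{\beta K_s}Y_sg_s\,dA_s\right]
+\bold{E}\!\left[\int_0^{\rho_n}e^{\beta K_s}\bigl(|\mathbb{H}_s|^2_{U'_s}\,dA_s+d\langle N\rangle_s\bigr)\right],
\end{equation*}
so when you isolate the three nonnegative ``energy'' terms they are bounded by $\bold{E}[e^{\beta K_{\rho_n}}Y_{\rho_n}^2]$ \emph{plus} the cross term, minus $\bold{E}[Y_0^2]$. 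The term $\bold{E}[e^{\beta K_{\rho_n}}Y_{\rho_n}^2]$ therefore sits on the side that must be bounded from above uniformly in $n$; it cannot be ``discarded as a non-negative terminal term''. Your displayed inequality, with $\bold{E}[|Y_0|^2]+\tfrac{2}{\beta}\|\alpha^{-1}g\|^2_{T,\beta}$ on the right and no $\|\xi\|^2_\beta$, cannot be correct in general: when $K$ is unbounded, $\|\alpha Y\|^2_{T,\beta}$ genuinely requires $\bold{E}[e^{\beta K_T}\xi^2]<\infty$ (this is why $\xi\in L^2_\beta$ is assumed) and is not controlled by $\bold{E}[Y_0^2]$ and $\|\alpha^{-1}g\|^2_{T,\beta}$ alone. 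The orientation in which a terminal term may be discarded is the backward one (integrate from $t$ to $T$), but there the data term on the right is $\|\xi\|^2_\beta$, and your stopping time $\rho_n=\inf\{t:K_t\geq n\}$ no longer controls the weight $e^{\beta K}$ on the relevant interval $[\rho_n,T]$, so it does not make $\int e^{\beta K}Y_{s-}\,d(\mathbb{H}\bullet\mathbb{M}+N)_s$ a true martingale either.

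The gap can be closed, but only by importing the estimate your approach was meant to avoid: in the forward identity one must bound $\bold{E}[e^{\beta K_{\rho_n}}Y_{\rho_n}^2]$ uniformly in $n$, and the only available tool is the representation $Y_{\rho_n}=\bold{E}[\xi+\int_{\rho_n}^Tg_s\,dA_s\,|\,\mathcal{F}_{\rho_n}]$ combined with Jensen's inequality and the estimate (\ref{sublem}), which yields $\bold{E}[e^{\beta K_{\rho_n}}Y_{\rho_n}^2]\leq 2\|\xi\|^2_\beta+\tfrac{2}{\beta}\|\alpha^{-1}g\|^2_{T,\beta}$. This is exactly the paper's route: it never applies It\^o's formula in this lemma, but instead bounds $\|\alpha Y\|^2_{T,\beta}$ directly from the conditional-expectation formula for $Y_t$, and bounds $\|\mathbb{H}\|^2_{\mathbb{M},\beta}+\|N\|^2_{\mathcal{H}^2_\beta(\bold{P})}$ by writing $n=\mathbb{H}\bullet\mathbb{M}+N$ and using the conditional-variance identity $\bold{E}[\langle n\rangle_T-\langle n\rangle_t\,|\,\mathcal{F}_t]=\bold{E}[(n_T-n_t)^2\,|\,\mathcal{F}_t]\leq 2\bold{E}[\xi^2+(\int_t^Tg_s\,dA_s)^2\,|\,\mathcal{F}_t]$, thereby avoiding all localization issues. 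You should either adopt that argument or correct the sign of the It\^o identity and supply the uniform bound on the stopped terminal term.
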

\begin{proof}
Uniqueness: It suffices to show that the triple $(0,0,0)$ is the unique solution of $\text{BSDE}(0,0)$. Let $(Y,\mathbb{H},N)\in\mathcal{S}_\beta$ be a solution of $\text{BSDE}(0,0)$. Then $Y_t=-\int^T_t\mathbb{H}_s\,d\mathbb{M}_s-\int^T_tdN_t$ $\bold{P}$-a.s.\ for all $t\in[0,T]$. Since the stochastic integrals in the right hand side are backward martingales and the process $Y$ is adapted, taking the conditional expectation given $\mathcal{F}_t$ for each $t\in[0,T]$ yields that $Y\equiv0$. Then we have that $N_T=-\int^T_0\mathbb{H}_s\,d\mathbb{M}_s$ $\bold{P}$-a.s. Since $N$ is strongly orthogonal to $\mathbb{M}$, we see that $\mathbb{H}=0$ in $L^2(\mathbb{M},U')$ and $N\equiv0$.\\
Existence: Firstly, for any process $h$, constant $p>0$, and $t\in[0,T]$, it holds that
\begin{align}\label{sublem}\nonumber
	\left(\int^T_th_s\,dA_s\right)^2&=\left(\int^T_te^{-pK_s/2}\alpha_se^{pK_s/2}\alpha^{-1}_sh_s\,dA_s\right)^2\\\nonumber
		&\leq\left(\int^T_te^{-pK_s}\alpha^2_s\,dA_s\right)\left(\int^T_te^{pK_s}\alpha^{-2}_sh^2_s\,dA_s\right)\\
		&=\frac{1}{p}\left(e^{-pK_t}-e^{-pK_T}\right)\int^T_te^{pK_s}\alpha^{-2}_sh^2_s\,dA_s.
\end{align}
In particular, since $\alpha^{-1}g\in L^2_{T,\beta}$ by the assumption, setting $ h=g$, $p=\beta$, and $t=0$ in the above estimate yields that $\int^T_0g_s\,dA_s\in L^2(\mathcal{F}_T,\bold{P})$. Set
\begin{equation*}
	\widetilde{L}_t=\bold{E}\left[\xi+\int^T_0g_s\,dAs\relmiddle|\mathcal{F}_t\right],\ t\in[0,T]
\end{equation*}
and denote the right-continuous version of $\widetilde{L}$ by $L$. Then $L$ is in $\mathcal{H}^2(\bold{P})$. Hence, there exist $\mathbb{H}\in L^2(\mathbb{M},U')$ and a martingale $N\in\mathcal{H}^2(\bold{P})$ such that $N_0=0$, $N$ is strongly orthogonal to $\mathbb{M}$, and
\begin{equation*}
L=L_0+\int^\cdot_0\mathbb{H}_s\,d\mathbb{M}_s+N.
\end{equation*}
Set $Y_t=L_t-\int^t_0g_s\,dA_s$. Then $Y$ is right-continuous and adapted. Furthermore, $Y$ can be written as
\begin{equation*}
	Y_t=\xi+\int^T_tg_s\,dA_s-\int^T_t\mathbb{H}_s\,d\mathbb{M}_s-\int^T_tdN_s,\ t\in[0,T].
\end{equation*}
We show that the triple $(Y,\mathbb{H},N)$ is in $\mathcal{S}_\beta$. Note that
\begin{align*}
	Y^2_t&=\bold{E}\left[\xi+\int^T_tg_s\,dA_s\relmiddle|\mathcal{F}_t\right]^2\leq\bold{E}\left[\left(\xi+\int^T_tg_s\,dA_s\right)^2\relmiddle|\mathcal{F}_t\right]\\
		&\leq2\bold{E}\left[\xi^2\relmiddle|\mathcal{F}_t\right]+2\bold{E}\left[\left(\int^T_tg_s\,dA_s\right)^2\relmiddle|\mathcal{F}_t\right]\\
		&\leq2\bold{E}\left[\xi^2\relmiddle|\mathcal{F}_t\right]+\frac{4}{\beta}e^{-\beta K_t/2}\bold{E}\left[\int^T_te^{\beta K_s/2}\alpha^{-2}_sg^2_s\,dA_s\relmiddle|\mathcal{F}_t\right],\ t\in[0,T],
\end{align*}
where we used the Jensen's inequality in the first inequality and the estimate (\ref{sublem}) with $p=\beta/2$ in the third inequality. We see that
\begin{align*}
	\|\alpha Y\|^2_{T,\beta}&=\bold{E}\left[\int^T_0e^{\beta K_t}\alpha^2_tY^2_t\,dA_t\right]\\
		&\leq2\bold{E}\left[\int^T_0\left(e^{\beta K_t}\alpha^2_t\xi^2+\frac{2}{\beta}e^{\beta K_t/2}\alpha^2_t\int^T_te^{\beta K_s/2}\alpha^{-2}_sg^2_s\,dA_s\right)\,dA_t\right]\\
		&=2\bold{E}\left[\xi^2\int^T_0e^{\beta K_t}\,dK_t\right]+\frac{4}{\beta}\bold{E}\left[\int^T_0e^{\beta K_s/2}\alpha^{-2}_sg^2_s\left(\int^s_0e^{\beta K_t/2}\,dK_t\right)\,dA_s\right]\\
		&\leq\frac{2}{\beta}\|\xi\|^2_\beta+\frac{8}{\beta^2}\|\alpha^{-1}g\|^2_{T,\beta}<\infty,
\end{align*}
and hence $\alpha Y\in L^2_{T,\beta}$. Next, let us prove that $\mathbb{H}\in L^2_\beta(\mathbb{M},U')$ and $N\in\mathcal{H}^2_\beta(\bold{P})$. Set $n=\int^\cdot_0\mathbb{H}_s\,d\mathbb{M}_s+N\in\mathcal{H}^2(\bold{P})$. Since $N$ is strongly orthogonal to $\mathbb{M}$, it holds that $\langle n\rangle=\int^\cdot_0|\mathbb{H}_s|^2_{U'_s}\,dA_s+\langle N\rangle$. Noting that the increasing process $K$ is continuous by Assumption \ref{assumption: A is conti}, It\^{o}'s formula yields that
\begin{equation*}
	d(e^{\beta K_t}(\langle n\rangle_T-\langle n\rangle_t))=\beta e^{\beta K_t}(\langle n\rangle_T-\langle n\rangle_t)dK_t-e^{\beta K_t}d\langle n\rangle_t,
\end{equation*}
and hence
\begin{align}\label{estimate1}\nonumber
	\|\mathbb{H}\|^2_{\mathbb{M},\beta}+\|N\|^2_{\mathcal{H}^2_\beta(\bold{P})}&=\bold{E}\left[\int^T_0e^{\beta K_t}\,d\langle n\rangle_t\right]\\
	&=\bold{E}[\langle n\rangle_T]+\beta\bold{E}\left[\int^T_0e^{\beta K_t}(\langle n\rangle_T-\langle n\rangle_t)\,dK_t\right]\\
	&=\|\mathbb{H}\|^2_\mathbb{M}+\|N\|^2_{\mathcal{H}^2(\bold{P})}+\beta\bold{E}\left[\int^T_0e^{\beta K_t}\bold{E}\left[\langle n\rangle_T-\langle n\rangle_t\middle|\mathcal{F}_t\right]\,dK_t\right].
\end{align}
Furthermore, we have
\begin{align}\label{estimate2}\nonumber
	\bold{E}\left[\langle n\rangle_T-\langle n\rangle_t\relmiddle|\mathcal{F}_t\right]&=\bold{E}\left[(n_T-n_t)^2\relmiddle|\mathcal{F}_t\right]\\\nonumber
		&=\bold{E}\left[\left(\xi-Y_t+\int^T_tg_s\,dA_s\right)^2\relmiddle|\mathcal{F}_t\right]\\\nonumber
		&=\bold{E}\left[\left(\xi+\int^T_tg_s\,dA_s\right)^2\relmiddle|\mathcal{F}_t\right]-\bold{E}\left[\xi+\int^T_tg_s\,dA_s\relmiddle|\mathcal{F}_t\right]^2\\
		&\leq2\bold{E}\left[\xi^2+\left(\int^T_tg_s\,dA_s\right)^2\relmiddle|\mathcal{F}_t\right],
\end{align}
where we used the identity $Y_t=\bold{E}[\xi+\int^T_tg_s\,dA_s\ |\ \mathcal{F}_t]$. Hence, in the same way as above estimates, we can show that the third term in (\ref{estimate1}) is finite. Hence $\|H\|^2_{\mathbb{M},\beta}+\|N\|^2_{\mathcal{H}^2_\beta(\bold{P})}<\infty$ and all assertions are proved.
\end{proof}
\begin{lemm}\label{lemma: mart}
Fix a $\beta$-standard data with $\beta>0$ and let $(Y,\mathbb{H},N)\in\mathcal{S}_\beta$ be a solution of $\text{BSDE}(f,\xi)$. Then, $\sup_{t\in[0,T]}|e^{\frac{\beta}{2}K_t}Y_t|$ is in $L^2(\bold{P})$. Consequently, for any $L\in\mathcal{H}^2_\beta(\bold{P})$, the stochastic integral $\int^\cdot_0e^{\beta K_s}Y_{s-}\,dL_s$ is a martingale.
\end{lemm}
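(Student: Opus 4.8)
The plan is to turn (\ref{cylindricalBSDE}) into a conditional-expectation representation for $Y$, dominate $e^{\frac\beta2 K}|Y|$ by two nonnegative square-integrable martingales and apply Doob's maximal inequality for the first assertion, and then derive the ``consequently'' part from the Burkholder--Davis--Gundy (BDG) inequality. Concretely, I would first set $g_s:=f(s,Y_s,\mathbb{H}_s)$; since $A$ is continuous (Assumption \ref{assumption: A is conti}) the measure $dA$ is non-atomic, so $g$ agrees $dA\otimes d\bold{P}$-a.e.\ with the predictable process $f(s,Y_{s-},\mathbb{H}_s)$ and may be taken predictable. The Lipschitz bound (\ref{Lipschitz}) together with $\alpha^2_s=\eta_s+\theta^2_s$ (hence $\alpha^{-1}_s\eta_s\le\alpha_s$ and $\alpha^{-1}_s\theta_s\le1$) gives
\[
\alpha^{-1}_s|g_s|\le\alpha^{-1}_s|f(s,0,0)|+\alpha_s|Y_s|+|\mathbb{H}_s|_{U'_s}\quad dA\otimes d\bold{P}\text{-a.e.}
\]
Since $(f,\xi)$ is $\beta$-standard and $(Y,\mathbb{H},N)\in\mathcal{S}_\beta$, the three processes on the right lie in $L^2_{T,\beta}$, so $\alpha^{-1}g\in L^2_{T,\beta}$, i.e.\ $\bold{E}[\int^T_0 e^{\beta K_s}\alpha^{-2}_sg^2_s\,dA_s]<\infty$; by (\ref{sublem}) with $p=\beta$, $t=0$ this also shows $\int^T_0|g_s|\,dA_s\in L^2(\bold{P})$. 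Taking $\bold{E}[\,\cdot\mid\mathcal{F}_t]$ in (\ref{cylindricalBSDE}) and using that $\mathbb{H}\bullet\mathbb{M}$ and $N$ are genuine martingales in $\mathcal{H}^2(\bold{P})$ yields $Y_t=\bold{E}[\xi+\int^T_tg_s\,dA_s\mid\mathcal{F}_t]$ $\bold{P}$-a.s., for each $t\in[0,T]$.

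Next, because $K$ is $\mathcal{F}_t$-measurable and nondecreasing,
\[
e^{\frac\beta2 K_t}|Y_t|\le\bold{E}\!\left[e^{\frac\beta2 K_T}|\xi|\relmiddle|\mathcal{F}_t\right]+\bold{E}\!\left[e^{\frac\beta2 K_t}\int^T_t|g_s|\,dA_s\relmiddle|\mathcal{F}_t\right].
\]
For the last term, Cauchy--Schwarz with respect to $dA_s$ and the elementary estimate $\int^T_te^{-\beta K_s}\,dK_s\le\frac1\beta e^{-\beta K_t}$ give
\[
e^{\frac\beta2 K_t}\int^T_t|g_s|\,dA_s\le\frac1{\sqrt\beta}\left(\int^T_0 e^{\beta K_s}\alpha^{-2}_sg^2_s\,dA_s\right)^{1/2},
\]
with no surviving $e^{\beta K_T}$ factor outside. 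Hence $e^{\frac\beta2 K_t}|Y_t|\le\Phi^{(1)}_t+\Phi^{(2)}_t$, where $\Phi^{(1)},\Phi^{(2)}$ are \cadlag\ versions of the nonnegative martingales with terminal values $e^{\frac\beta2 K_T}|\xi|$ and $\beta^{-1/2}(\int^T_0 e^{\beta K_s}\alpha^{-2}_sg^2_s\,dA_s)^{1/2}$, both in $L^2(\bold{P})$ with $L^2$-norms $\|\xi\|_\beta$ and $\beta^{-1/2}\|\alpha^{-1}g\|_{T,\beta}$. Since $e^{\frac\beta2 K}Y$ ($K$ continuous, $Y$ \cadlag) and $\Phi^{(1)},\Phi^{(2)}$ are \cadlag, the domination passes from a countable dense set of times to all $t$ simultaneously, and Doob's $L^2$ maximal inequality yields $\bold{E}[\sup_{t\in[0,T]}|e^{\frac\beta2 K_t}Y_t|^2]<\infty$.

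For the last assertion, put $Y^*:=\sup_{t\in[0,T]}|e^{\frac\beta2 K_t}Y_t|\in L^2(\bold{P})$ and fix $L\in\mathcal{H}^2_\beta(\bold{P})$. As $K$ is continuous, $e^{\beta K_s}|Y_{s-}|=e^{\frac\beta2 K_s}|(e^{\frac\beta2 K}Y)_{s-}|\le e^{\frac\beta2 K_s}Y^*$, so the predictable process $e^{\beta K_s}Y_{s-}$ is $L$-integrable (indeed $(Y^*)^2\int^T_0 e^{\beta K_s}\,d[L]_s<\infty$ a.s., since $L\in\mathcal{H}^2_\beta(\bold{P})$), and $X:=\int^\cdot_0 e^{\beta K_s}Y_{s-}\,dL_s$ is a local martingale with $[X]_T=\int^T_0 e^{2\beta K_s}Y^2_{s-}\,d[L]_s\le(Y^*)^2\int^T_0 e^{\beta K_s}\,d[L]_s$. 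By Cauchy--Schwarz,
\[
\bold{E}\big[\,[X]^{1/2}_T\,\big]\le\bold{E}[(Y^*)^2]^{1/2}\,\bold{E}\!\left[\int^T_0 e^{\beta K_s}\,d[L]_s\right]^{1/2}=\bold{E}[(Y^*)^2]^{1/2}\,\|L\|_{\mathcal{H}^2_\beta(\bold{P})}<\infty,
\]
so BDG gives $\bold{E}[\sup_{t\in[0,T]}|X_t|]<\infty$; a local martingale dominated in $L^1$ by an integrable random variable is a (uniformly integrable) martingale.

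The main obstacle is the weighted estimate in the second paragraph: a naive bound $e^{\frac\beta2 K_t}\le e^{\frac\beta2 K_T}$ inside the $dA_s$-integral would leave the possibly unbounded factor $e^{\beta K_T}$ outside the conditional expectation, whereas using the forward increment $\int^T_te^{-\beta(K_s-K_t)}\,dK_s\le\frac1\beta$ is exactly what makes both dominating processes genuine $L^2$-martingales. The same care is needed in the third step, where $\bold{E}[\int^T_0 e^{2\beta K_s}Y^2_{s-}\,d[L]_s]$ need not be finite and one must instead route through $\bold{E}\big[\,[X]^{1/2}_T\,\big]$ and BDG.
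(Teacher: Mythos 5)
Your proposal is correct and follows essentially the same route as the paper: both use the weighted Cauchy--Schwarz estimate (\ref{sublem}) with $p=\beta$ to dominate $e^{\frac{\beta}{2}K_t}|Y_t|$ by conditional expectations of fixed $L^2$ random variables (controlled via the Lipschitz condition and the $\mathcal{S}_\beta$-membership of the solution), then apply Doob's maximal inequality, and both establish the martingale property of $\int^\cdot_0 e^{\beta K_s}Y_{s-}\,dL_s$ by bounding $\bold{E}\bigl[[X]_T^{1/2}\bigr]$ through Cauchy--Schwarz against $\|L\|_{\mathcal{H}^2_\beta(\bold{P})}$ and invoking the BDG inequality. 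Your version is, if anything, slightly more careful on minor points (working with $|\xi|$, the predictable modification of $g$, and passing the domination from a dense set of times to all $t$).
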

\begin{proof}
By using the estimate (\ref{sublem}) with $h_s=f(s,Y_s,\mathbb{H}_s)$ and $p=\beta$, we can show that
\begin{align*}
	e^{\beta K_t}Y^2_t&=\left(\bold{E}\left[e^{\frac{\beta}{2}K_t}\xi\relmiddle|\mathcal{F}_t\right]+\bold{E}\left[e^{\frac{\beta}{2}K_t}\int^T_tf(s,Y_s,\mathbb{H}_s)\,dA_s\relmiddle|\mathcal{F}_t\right]\right)^2\\
		&\leq2\bold{E}\left[e^{\frac{\beta}{2}K_t}\xi\relmiddle|\mathcal{F}_t\right]^2+2\bold{E}\left[e^{\frac{\beta}{2}K_t}\int^T_tf(s,Y_s,\mathbb{H}_s)\,dA_s\relmiddle|\mathcal{F}_t\right]^2\\
		&\leq2\bold{E}\left[e^{\frac{\beta}{2}K_T}\xi\relmiddle|\mathcal{F}_t\right]^2+\frac{2}{\beta}\bold{E}\left[\left(\int^T_0e^{\beta K_s}\alpha^{-2}_sf^2(s,Y_s,\mathbb{H}_s)\,dA_s\right)^{1/2}\relmiddle|\mathcal{F}_t\right]^2.
\end{align*}
Hence,
\begin{align*}
	\bold{E}\left[\sup_{t\in[0,T]}e^{\beta K_t}Y^2_t\right]&\leq2\bold{E}\left[\sup_{t\in[0,T]}\bold{E}\left[e^{\beta K_T/2}\xi\relmiddle|\mathcal{F}_t\right]^2\right]\\
		&\hspace{6mm}+\frac{2}{\beta}\bold{E}\left[\sup_{t\in[0,T]}\bold{E}\left[\left(\int^T_0e^{\beta K_s}\alpha^{-2}_sf^2(s,Y_s,\mathbb{H}_s)\,dA_s\right)^{1/2}\relmiddle|\mathcal{F}_t\right]^2\right]\\
		&\leq8\bold{E}\left[e^{\beta K_T}\xi^2\right]+\frac{8}{\beta}\bold{E}\left[\int^T_0e^{\beta K_s}\alpha^{-2}_sf^2(s,Y_s,\mathbb{H}_s)\,dA_s\right]\\
		&\leq8\|\xi\|^2_\beta+\frac{24}{\beta}\left(\|\alpha^{-1}f(\cdot,0,0)\|^2_{T,\beta}+\|\alpha Y\|^2_{T,\beta}+\|\mathbb{H}\|^2_{\mathbb{M},\beta}\right)\\
		&<\infty,
\end{align*}
where we used Doob's inequality in the second inequality and the Lipschitz condition (\ref{Lipschitz}) in the third inequality.\par
Let $L\in\mathcal{H}^2_\beta(\bold{P})$ be given. Noting Remark \ref{remark: BSDE} (\rnum{3}) and that $K$ is continuous, we have that
\begin{align*}
	\bold{E}\left[\left(\int^T_0e^{2\beta K_t}Y^2_{t-}\,d[L]_t\right)^{1/2}\right]&\leq\bold{E}\left[\left(\int^T_0e^{\beta K_t}\,d[L]_t\right)^{1/2}\sup_{t\in[0,T]}e^{\beta K_t/2}|Y_t|\right]\\
		&\leq\|L\|_{\mathcal{H}^2_\beta(\bold{P})}\cdot\left\|\sup_{t\in[0,T]}e^{\beta K_t/2}|Y_t|\right\|_{L^2(\bold{P})}<\infty,
\end{align*}
and hence the stochastic integral $\int^\cdot_0e^{\beta K_s}Y_{s-}\,dL_s$ is a martingale.
\end{proof}
\begin{proof}[Proof of Theorem \ref{cylindricalBSDE existence and uniqueness}]
Define a map $\Phi\colon\mathcal{S}_\beta\to\mathcal{S}_\beta$ by $\Phi(Y,\mathbb{H},N)=(\widehat{Y},\widehat{\mathbb{H}},\widehat{N})$, where $(\widehat{Y},\widehat{\mathbb{H}},\widehat{N})$ is the unique solution of the BSDE
\begin{equation*}
	\widehat{Y}_t=\xi+\int^T_tf(s,Y_s,\mathbb{H}_s)\,dA_s-\int^T_t\widehat{\mathbb{H}}_s\,d\mathbb{M}_s-\int^T_td\widehat{N}_s,\ t\in[0,T].
\end{equation*}
From Lemma \ref{BSDE lemma1}, since the process $(\alpha^{-1}_sf(s,Y_s,\mathbb{H}_s))_{s\in[0,T]}$ is in $L^2_{T,\beta}$ for any $(Y,\mathbb{H},N)\in\mathcal{S}_\beta$, the map $\Phi$ is well-defined. To prove Theorem \ref{cylindricalBSDE existence and uniqueness}, it suffices to show that $\Phi$ is a contraction mapping on $\mathcal{S}_\beta$.\par
Fix two elements $(Y^1,\mathbb{H}^1,N^1),\ (Y^2,\mathbb{H}^2,N^2)\in\mathcal{S}_\beta$ and consider $(\widehat{Y}^i,\widehat{\mathbb{H}}^i,\widehat{N}^i)=\Phi(Y^i,\mathbb{H}^i,N^i)$ for each $i=1,2$. Define $\delta Y=Y^1-Y^2$ and $\delta\widehat{Y}=\widehat{Y}^1-\widehat{Y}^2$. Define also $\delta\mathbb{H}$, $\delta\widehat{\mathbb{H}}$, $\delta N$, and $\delta\widehat{N}$ in the same manner. Then $\delta\widehat{Y}$ satisfies
\begin{equation*}
	\delta\widehat{Y}_t=\int^T_t(f(s,Y^1_s,\mathbb{H}^1_s)-f(s,Y^2_s,\mathbb{H}^2_s))\,dA_s-\int^T_t\delta\widehat{\mathbb{H}}_s\,d\mathbb{M}_s-\int^T_td\delta\widehat{N}_s,\ t\in[0,T]
\end{equation*}
for $\bold{P}$-a.s. By noting that $K=\int^\cdot_0\alpha^2_s\,dA_s$ as well as $A$ is continuous by Assumption \ref{assumption: A is conti}, It\^{o}'s formula implies that
\begin{align*}
	d|\delta\widehat{Y}_t|^2&=-2\delta\widehat{Y}_t(f(t,Y^1_t,\mathbb{H}^1_t)-f(t,Y^2_t,\mathbb{H}^2_t))dA_t+d\left[\int^\cdot_0\delta\widehat{\mathbb{H}}_s\,d\mathbb{M}_s\right]_t+d[\delta\widehat{N}]_t\\
		&\hspace{6mm}+2\delta\widehat{Y}_{t-}d\left(\int^t_0\delta\widehat{\mathbb{H}}_s\,d\mathbb{M}_s+\delta\widehat{N}_t\right)
\end{align*}
and that
\begin{equation*}
	d(e^{\beta K_t}|\delta\widehat{Y}_t|^2)=e^{\beta K_t}(\beta\alpha^2_t|\delta\widehat{Y}_t|^2dA_t+d|\delta\widehat{Y}_t|^2).
\end{equation*}
Since $\delta\widehat{\mathbb{H}}\in L^2_\beta(\mathbb{M},U')$ and $\delta\widehat{N}\in\mathcal{H}^2_\beta(\bold{P})$, Lemma \ref{lemma: mart} implies that the stochastic integral $\int^\cdot_0e^{\beta K_t}\delta\widehat{Y}_{t-}\,d(\int^t_0\delta\widehat{\mathbb{H}}_s\,d\mathbb{M}_s+\delta\widehat{N}_t)$ is a martingale. Hence, integrating  on $[0,T]$ and taking expectations yield that
\begin{align*}
	&\bold{E}\left[e^{\beta K_T}|\delta\widehat{Y}_T|^2\right]-\bold{E}\left[|\delta\widehat{Y}_0|^2\right]\\
		&=\bold{E}\left[\int^T_0e^{\beta K_t}(\beta\alpha^2_t|\delta\widehat{Y}_t|^2-2\delta\widehat{Y_t}(f(t,Y^1_t,\mathbb{H}^1_t)-f(t,Y^2_t,\mathbb{H}^2_t)))\,dA_t\right]\\
		&\hspace{6mm}+\bold{E}\left[\int^T_0e^{\beta K_t}\left(d\left\langle\int^\cdot_0\delta\widehat{\mathbb{H}}_s\,d\mathbb{M}_s\right\rangle_t+d\langle\delta\widehat{N}\rangle_t\right)\right]\\
		&=\bold{E}\left[\int^T_0e^{\beta K_t}(\beta\alpha^2_t|\delta\widehat{Y}_t|^2-2\delta\widehat{Y}_t(f(t,Y^1_t,\mathbb{H}^1_t)-f(t,Y^2_t,\mathbb{H}^2_t)))\,dA_t\right]\\
		&\hspace{6mm}+\bold{E}\left[\int^T_0e^{\beta K_t}\left(|\delta\widehat{\mathbb{H}}_t|^2_{U'_t}\,dA_t+d\langle\delta\widehat{N}\rangle_t\right)\right],
\end{align*}
where we used the relation $\langle\int^\cdot_0\delta_n\mathbb{H}_s\,d\mathbb{M}_s\rangle_t=\int^t_0|\delta_n\mathbb{H}_s|^2_{U'_s}\,dA_s$, $t\in[0,T]$, in the second equality. Since $\delta\widehat{Y}_T=0$, it holds that
\begin{align*}
	&\beta\|\alpha\delta\widehat{Y}\|^2_{T,\beta}+\|\delta\widehat{\mathbb{H}}\|^2_{\mathbb{M},\beta}+\|\delta\widehat{N}\|^2_{\mathcal{H}^2_\beta(\bold{P})}\\
	&\leq\bold{E}\left[\int^T_0e^{\beta K_t}2|\delta\widehat{Y}_t|\cdot|f(t,Y^1_t,\mathbb{H}^1_t)-f(t,Y^2_t,\mathbb{H}^2_t)|\,dA_t\right]\\
	&\leq\bold{E}\left[\int^T_0e^{\beta K_t}2|\delta\widehat{Y}_t|\cdot(\eta_t|\delta{Y}_t|+\theta_t|\delta\mathbb{H}_t|_{U'_t})\,dA_t\right]\\
	&\leq\bold{E}\left[\int^T_0e^{\beta K_t}\left(2\mu^2\alpha^2_t|\delta\widehat{Y}_t|^2+\frac{\alpha^2_t}{\mu^2}|\delta Y_t|^2+\frac{1}{\mu^2}|\delta{\mathbb{H}}_t|^2_{U'_t}\right)\,dA_t\right]
\end{align*}
for any constant $\mu>0$, where we used the Lipschitz condition (\ref{Lipschitz}) and the trivial inequality
\begin{equation}\label{trivial inequality}
	2ab\leq\mu^2a^2+\frac{b^2}{\mu^2}\ \text{for all}\ a,b\in\mathbb{R}\ \text{and}\ \mu>0.
\end{equation}
Hence, we have
\begin{equation*}
	(\beta-2\mu^2)\|\alpha\delta\widehat{Y}\|^2_{T,\beta}+\|\delta\widehat{\mathbb{H}}\|^2_{\mathbb{M},\beta}+\|\delta\widehat{N}\|^2_{\mathcal{H}^2_\beta(\bold{P})}\leq\frac{1}{\mu^2}(\|\alpha\delta Y\|^2_{T,\beta}+\|\delta\mathbb{H}\|^2_{\mathbb{M},\beta})
\end{equation*}
for any constant $\mu>0$. Now, since $\beta>3$, we can choose a constant $\mu>0$ so that $\beta-2\mu^2=1$, and then $\frac{1}{\mu^2}=\frac{2}{\beta-1}\in(0,1)$. Taking such $\mu>0$ yields that $\|(\delta\widehat{Y},\delta\widehat{\mathbb{H}},\delta\widehat{N})\|^2_{\mathcal{S}_\beta}\leq\frac{2}{\beta-1}\|(\delta Y,\delta\mathbb{H},\delta N)\|^2_{\mathcal{S}_\beta}$, and hence the map $\Phi$ is a contraction mapping on $\mathcal{S}_\beta$.
\end{proof}
Next, we show an approximation result of the unique solution $(Y,\mathbb{H},N)\in\mathcal{S}_\beta$ of $\text{BSDE}(f,\xi)$ by a sequence of solutions of corresponding finite-dimensional BSDEs.\par
Let $\{x_1,x_2,\dots\}$ be a countable dense subset of $X$. For each $n\in\mathbb{N}$, let $\mathbb{M}^n$ be the $n$-dimensional square-integrable martingale $\mathbb{M}^n=(M^{x_1},\dots,M^{x_n})^{\text{tr}}$. Here, $(\cdot)^{\text{tr}}$ denotes transposition of a (finite-dimensional) vector, so that $\mathbb{M}^n$ is a column vector. Fix any $n\in\mathbb{N}$. The space of $n$-dimensional integrands is defined by 
\begin{equation*}
	L^2(\mathbb{M}^n,\mathbb{R}^n)=\left\{H\relmiddle|H\ \text{is}\ \mathbb{R}^n\text{-valued predictable and}\ \|H\|^2_{\mathbb{M}^n}=\bold{E}\left[\int^T_0H^{\text{tr}}_td\langle\mathbb{M}^n\rangle_tH_t\right]<\infty\right\}.
\end{equation*}
Then $L^2(\mathbb{M}^n,\mathbb{R}^n)$ is a Hilbert space. Furthermore, for any $H\in L^2(\mathbb{M}^n,\mathbb{R}^n)$, we have that $H\bullet\mathbb{M}^n\in\mathcal{H}^2(\bold{P})$ and $\|H\bullet\mathbb{M}^n\|_{\mathcal{H}^2(\bold{P})}=\|H\|_{\mathbb{M}^n}$. The stable subspace generated by $\mathbb{M}^n$ in the set of square-integrable martingales coincides with the set of $n$-dimensional vector stochastic integrals $\{H\bullet\mathbb{M}\ |\ H\in L^2(\mathbb{M}^n,\mathbb{R}^n)\}$. As a consequence, for any random variable $\xi\in L^2(\mathcal{F}_T,\bold{P})$, there exist a unique $H\in L^2(\mathbb{M}^n,\mathbb{R}^n)$ and $N\in\mathcal{H}^2(\bold{P})$ such that $N_0=0$, $N$ is strongly orthogonal to $\mathbb{M}^n$, and
\begin{equation}\label{finite-dim decomposition}
	\xi=\bold{E}[\xi]+\int^T_0H_s\,d\mathbb{M}^n_s+N_T.
\end{equation}
Note that the predictable quadratic variation of $\mathbb{M}^n$ can be written as
\begin{equation*}
	\langle\mathbb{M}^n\rangle_t=\int^t_0\Sigma^n_s\,dA_s,\ t\in[0,T],
\end{equation*}
where $\Sigma^n_{s,\omega}=(\Sigma^{n,(i,j)}_{s,\omega})_{i,j=1,\dots,n}$ with $\Sigma^{n,(i,j)}_{s,\omega}=\mathcal{Q}_{s,\omega}(x_i,x_j)$. We can identify each element $H=(H^1,\dots,H^n)^{\text{tr}}\in L^2(\mathbb{M}^n,\mathbb{R}^n)$ with an $\mathcal{M}$-valued predictable process (also denoted by $H$) by setting $H=\sum^n_{i=1}H^i\delta_{x_i}$. Then we have that
\begin{align*}
	\|H\|^2_{\mathbb{M}^n}&=\bold{E}\left[\int^T_0H^{\text{tr}}_td\langle\mathbb{M}^n\rangle_tH_t\right]\\
		&=\bold{E}\left[\int^T_0H^{\text{tr}}_t\Sigma^n_tH_t\,dA_t\right]\\
		&=\bold{E}\left[\int^T_0\sum^n_{i,j=1}H^i_t\mathcal{Q}_t(x_i,x_j)H^j_t\,dA_t\right]\\
		&=\bold{E}\left[\int^T_0\langle H_t,\mathcal{Q}_tH_t\rangle_{\mathcal{M},\mathcal{C}}\,dA_t\right]\\
		&=\bold{E}\left[\int^T_0|H_t|^2_{U'_t}\,dA_t\right]\\
		&=\|H\|^2_\mathbb{M}<\infty.
\end{align*}
Hence, we see that the measure-valued process $H$ is in $L^2(\mathbb{M},U')$ and the space $L^2(\mathbb{M}^n,\mathbb{R}^n)$ is isometric to a subspace of $L^2(\mathbb{M},U')$.\par
For each $n\in\mathbb{N}$ and a $\beta$-standard data $(f,\xi)$, consider the finite-dimensional BSDE
\begin{equation}\label{finite-dimBSDE}
	Y^n_t=\xi+\int^T_tf(s,Y^n_s,H^n_s)\,dA_s-\int^T_tH^n_s\,d\mathbb{M}^n_s-\int^T_tdN^n_s,\ t\in[0,T].
\end{equation}
We refer this BSDE to $\text{BSDE}^n(f,\xi)$. Let $L^2_\beta(\mathbb{M}^n,\mathbb{R}^n)$ and $\mathcal{S}^n_\beta$ be corresponding finite-dimensional solution spaces, i.e.,
\begin{equation*}
	L^2_{\beta}(\mathbb{M}^n,\mathbb{R}^n)=\left\{H\in L^2(\mathbb{M}^n,\mathbb{R}^n)\relmiddle|\|H\|^2_{\mathbb{M}^n,\beta}=\bold{E}\left[\int^T_0e^{\beta K_t}H^{\text{tr}}_t\,d\langle\mathbb{M}^n\rangle_tH_t\right]<\infty\right\}
\end{equation*}
and
\begin{equation*}
	\mathcal{S}^n_\beta=\left\{(\phi,K,L)\relmiddle|\alpha\phi\in L^2_{T,\beta},\ K\in L^2_\beta(\mathbb{M}^n,\mathbb{R}^n),\ \text{and}\ L\in\mathcal{H}^2_\beta(\bold{P})\right\}.
\end{equation*}
We can see that $L^2_\beta(\mathbb{M}^n,\mathbb{R}^n)\subset L^2(\mathbb{M},U')$ and $\mathcal{S}^n_\beta\subset\mathcal{S}_\beta$ by identifying each finite-dimensional integrand $H$ with the corresponding measure-valued integrand, for any $n\in\mathbb{N}$ and $\beta\geq0$.
\begin{defi}
For $n\in\mathbb{N}$ and a $\beta$-standard data $(f,\xi)$ with $\beta\geq0$, a triple $(Y^n,H^n,N^n)\in\mathcal{S}^n_\beta$ is called a solution of $\text{BSDE}^n(f,\xi)$ if $N^n_0=0$, $\langle N^n,M^{x_i}\rangle\equiv0$ for $i=1,\dots,n$ and (\ref{finite-dimBSDE}) holds $\bold{P}$-a.s.
\end{defi}
Replacing $\mathbb{M}$ by $\mathbb{M}^n$ and noting the representation formula (\ref{finite-dim decomposition}), we can show the following proposition in the same way as Theorem \ref{cylindricalBSDE existence and uniqueness}.
\begin{theo}\label{finite-dimBSDE existence and uniqueness}
Let $\beta>3$. For $n\in\mathbb{N}$ and a $\beta$-standard data $(f,\xi)$, there exists a unique solution $(Y^n,H^n,N^n)$ of $\text{BSDE}^n(f,\xi)$ in $\mathcal{S}^n_\beta$.
\end{theo}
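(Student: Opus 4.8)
The plan is to repeat, essentially verbatim, the three-step argument used to prove Theorem \ref{cylindricalBSDE existence and uniqueness}, with the cylindrical martingale $\mathbb{M}$ replaced by the finite-dimensional martingale $\mathbb{M}^n$ and the representation corollary replaced by the $n$-dimensional martingale representation (\ref{finite-dim decomposition}). The only structural inputs used in the proof of Theorem \ref{cylindricalBSDE existence and uniqueness} are: (a) the martingale representation property, i.e.\ that every $\xi\in L^2(\mathcal{F}_T,\bold{P})$ splits as $\bold{E}[\xi]+H\bullet\mathbb{M}^n+N_T$ with $N$ strongly orthogonal to $\mathbb{M}^n$ and $H\in L^2(\mathbb{M}^n,\mathbb{R}^n)$; (b) the isometry $\|H\bullet\mathbb{M}^n\|_{\mathcal{H}^2(\bold{P})}=\|H\|_{\mathbb{M}^n}$ together with the identity $\langle H\bullet\mathbb{M}^n\rangle_t=\int^t_0H^{\text{tr}}_s\Sigma^n_sH_s\,dA_s=\int^t_0|H_s|^2_{U'_s}\,dA_s$; (c) continuity of $A$ (Assumption \ref{assumption: A is conti}), used to apply It\^{o}'s formula to $e^{\beta K_t}(\cdot)$; and (d) the Lipschitz condition (\ref{Lipschitz}) on $f$. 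All of these hold in the finite-dimensional setting — in particular (b) is precisely the chain of identities displayed before (\ref{finite-dimBSDE}), which moreover shows $L^2_\beta(\mathbb{M}^n,\mathbb{R}^n)$ is isometric to a closed subspace of $L^2_\beta(\mathbb{M},U')$, so that $\mathcal{S}^n_\beta$, with norm $\|(\phi,K,L)\|^2_{\mathcal{S}^n_\beta}=\|\alpha\phi\|^2_{T,\beta}+\|K\|^2_{\mathbb{M}^n,\beta}+\|L\|^2_{\mathcal{H}^2_\beta(\bold{P})}$, is itself a Hilbert space.

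First I would prove the analogue of Lemma \ref{BSDE lemma1}: if $f(\omega,t,\cdot,\cdot)\equiv g(\omega,t)$ with $\alpha^{-1}g\in L^2_{T,\beta}$ and $\xi\in L^2_\beta$, then $\text{BSDE}^n(f,\xi)$ has a unique solution in $\mathcal{S}^n_\beta$. Uniqueness follows by taking conditional expectations in $Y^n_t=-\int_t^TH^n_s\,d\mathbb{M}^n_s-\int_t^TdN^n_s$ and then using strong orthogonality of $N^n$ to $\mathbb{M}^n$. For existence, one sets $L_t=\bold{E}[\xi+\int_0^Tg_s\,dA_s\mid\mathcal{F}_t]$ (well defined in $\mathcal{H}^2(\bold{P})$ by estimate (\ref{sublem})), applies (\ref{finite-dim decomposition}) to obtain $H^n\in L^2(\mathbb{M}^n,\mathbb{R}^n)$ and $N^n$ orthogonal to $\mathbb{M}^n$, puts $Y^n_t=L_t-\int_0^tg_s\,dA_s$, and verifies $\alpha Y^n\in L^2_{T,\beta}$, $H^n\in L^2_\beta(\mathbb{M}^n,\mathbb{R}^n)$, $N^n\in\mathcal{H}^2_\beta(\bold{P})$ by the same two computations as in Lemma \ref{BSDE lemma1} (Jensen/Doob combined with (\ref{sublem}), and It\^{o}'s formula applied to $e^{\beta K_t}(\langle n\rangle_T-\langle n\rangle_t)$ with $n=H^n\bullet\mathbb{M}^n+N^n$). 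Next I would record the analogue of Lemma \ref{lemma: mart}: for a solution $(Y^n,H^n,N^n)\in\mathcal{S}^n_\beta$ one has $\sup_{t\in[0,T]}|e^{\frac{\beta}{2}K_t}Y^n_t|\in L^2(\bold{P})$ (via Doob's inequality, (\ref{sublem}) with $p=\beta$, and (\ref{Lipschitz})), hence $\int_0^\cdot e^{\beta K_s}Y^n_{s-}\,dL_s$ is a true martingale for any $L\in\mathcal{H}^2_\beta(\bold{P})$.

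Finally I would define $\Phi^n\colon\mathcal{S}^n_\beta\to\mathcal{S}^n_\beta$ by letting $\Phi^n(Y,H,N)$ be the unique solution, from the linear step, of the BSDE with driver $g_s=f(s,Y_s,H_s)$; this is well defined since $\alpha^{-1}f(\cdot,Y,H)\in L^2_{T,\beta}$ by (\ref{Lipschitz}) and $\beta$-standardness, and it maps $\mathcal{S}^n_\beta$ into itself because (\ref{finite-dim decomposition}) produces an $\mathbb{R}^n$-valued integrand and a complement orthogonal to $\mathbb{M}^n$. For two inputs, applying It\^{o}'s formula to $e^{\beta K_t}|\delta\widehat{Y}_t|^2$ (legitimate by continuity of $K$, the stochastic-integral term being a martingale by the Lemma \ref{lemma: mart} analogue), integrating over $[0,T]$, using $\delta\widehat{Y}_T=0$, the Lipschitz bound (\ref{Lipschitz}) and the elementary inequality (\ref{trivial inequality}) yields, for every $\mu>0$,
\begin{equation*}
	(\beta-2\mu^2)\|\alpha\delta\widehat{Y}\|^2_{T,\beta}+\|\delta\widehat{H}\|^2_{\mathbb{M}^n,\beta}+\|\delta\widehat{N}\|^2_{\mathcal{H}^2_\beta(\bold{P})}\leq\frac{1}{\mu^2}\bigl(\|\alpha\delta Y\|^2_{T,\beta}+\|\delta H\|^2_{\mathbb{M}^n,\beta}\bigr).
\end{equation*}
Since $\beta>3$, choosing $\mu$ with $\beta-2\mu^2=1$ gives $\frac{1}{\mu^2}=\frac{2}{\beta-1}\in(0,1)$, so $\Phi^n$ is a contraction on the Hilbert space $\mathcal{S}^n_\beta$ and the Banach fixed point theorem delivers the unique solution. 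I do not expect a genuine obstacle: the entire content is that the proof of Theorem \ref{cylindricalBSDE existence and uniqueness} used nothing beyond ingredients (a)--(d), all of which survive the passage to $\mathbb{M}^n$; the only point worth checking with care is completeness of $\mathcal{S}^n_\beta$ (needed for the fixed point theorem), which follows from completeness of $L^2(\mathbb{M}^n,\mathbb{R}^n)$ and the isometry $\|H\|_{\mathbb{M}^n}=\|H\|_\mathbb{M}$.
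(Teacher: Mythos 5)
Your proposal is correct and is exactly the route the paper takes: the paper proves Theorem \ref{finite-dimBSDE existence and uniqueness} by the single remark that one replaces $\mathbb{M}$ by $\mathbb{M}^n$, invokes the representation formula (\ref{finite-dim decomposition}) in place of the cylindrical one, and repeats the proof of Theorem \ref{cylindricalBSDE existence and uniqueness} verbatim. Your identification of the four structural inputs (representation, isometry, continuity of $A$, Lipschitz condition) and the check of completeness of $\mathcal{S}^n_\beta$ merely make explicit what the paper leaves implicit.
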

The following theorem is our second main result about an approximation method of the unique solution of the infinite-dimensional $\text{BSDE}(f,\xi)$.
\begin{theo}\label{BSDEapproximation}
Let $(f,\xi)$ be a $\beta$-standard data with $\beta>3$. Let $(Y,\mathbb{H},N)\in\mathcal{S}_\beta$ be the unique solution of $\text{BSDE}(f,\xi)$ and $(Y^n,H^n,N^n)\in\mathcal{S}^n_\beta$ be the unique solution of $\text{BSDE}^n(f,\xi)$ for each $n\in\mathbb{N}$. Then we have that
\begin{equation*}
	\|(Y,\mathbb{H},N)-(Y^n,H^n,N^n)\|_{\mathcal{S}_\beta}\to0\ \text{as}\ n\to\infty.
\end{equation*}
Moreover, there exists a constant $\gamma>0$ depending only on $\beta$ such that
\begin{equation}\label{gamma1}
	\|\alpha(Y-Y^n)\|^2_{T,\beta}+\|\mathbb{H}-H^n\|^2_{\mathbb{M},\beta}\leq\gamma\|N-N^n\|^2_{\mathcal{H}^2_\beta(\bold{P})}
\end{equation}
for all $n\in\mathbb{N}$.
\end{theo}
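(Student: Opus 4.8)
The strategy is to exploit that $(Y,\mathbb{H},N)$ and $(Y^n,H^n,N^n)$ are the fixed points of the contraction maps $\Phi$ and its finite-dimensional analogue $\Phi^n$, which share the \emph{common} contraction constant $\kappa:=\sqrt{2/(\beta-1)}\in(0,1)$ (the map $\Phi^n$ is well defined on all of $\mathcal{S}_\beta$, not just $\mathcal{S}^n_\beta$, and the contraction estimate in the proof of Theorem~\ref{cylindricalBSDE existence and uniqueness} applies verbatim with $\mathbb{M}$ replaced by $\mathbb{M}^n$, finite-dimensional integrands being regarded as measure-valued), and to reduce both assertions to a comparison of the infinite- and finite-dimensional solutions of \emph{driver-independent} BSDEs, which are written out explicitly in Lemma~\ref{BSDE lemma1}. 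As preliminaries I would record that $\mathcal{H}^2_\beta(\bold{P})$ is a Hilbert space, that for $\mathbb{K}\in L^2_\beta(\mathbb{M},U')$ and $H\in L^2_\beta(\mathbb{M}^n,\mathbb{R}^n)$ one has $\|\mathbb{K}\bullet\mathbb{M}\|_{\mathcal{H}^2_\beta(\bold{P})}=\|\mathbb{K}\|_{\mathbb{M},\beta}$ and $\|H\bullet\mathbb{M}^n\|_{\mathcal{H}^2_\beta(\bold{P})}=\|H\|_{\mathbb{M}^n,\beta}$ (since $\langle\mathbb{K}\bullet\mathbb{M}\rangle_t=\int^t_0|\mathbb{K}_s|^2_{U'_s}\,dA_s$ and Remark~\ref{remark: BSDE}~(\rnum{3})), so that $\mathcal{I}_\beta:=\{\mathbb{K}\bullet\mathbb{M}:\mathbb{K}\in L^2_\beta(\mathbb{M},U')\}$ and $\mathcal{I}^n_\beta:=\{H\bullet\mathbb{M}^n:H\in L^2_\beta(\mathbb{M}^n,\mathbb{R}^n)\}$ are closed subspaces of $\mathcal{H}^2_\beta(\bold{P})$ with $\mathcal{I}^1_\beta\subseteq\mathcal{I}^2_\beta\subseteq\cdots\subseteq\mathcal{I}_\beta$; and, using the complete orthonormal systems $\{e^m\}$ of Remark~\ref{remark: CONS} together with the dominated convergence theorem (the partial sums $\sum_{m\le M}(\mathbb{H},e^m)_{U'}e^m$ lie in some $L^2_\beta(\mathbb{M}^M,\mathbb{R}^M)$ and converge to $\mathbb{H}$ in $L^2_\beta(\mathbb{M},U')$), $\bigcup_n\mathcal{I}^n_\beta$ is dense in $\mathcal{I}_\beta$, whence the orthogonal projections $\Pi^n_\beta$ onto $\mathcal{I}^n_\beta$ converge strongly to the projection $\Pi_\beta$ onto $\mathcal{I}_\beta$. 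Finally, if $L\in\mathcal{H}^2_\beta(\bold{P})$ satisfies $\langle L,M^x\rangle\equiv0$ for all $x\in X$, then $\langle L,Z\rangle\equiv0$ for every $Z$ in the stable subspace generated by $\mathbb{M}$, so $L\perp\mathcal{I}_\beta$ in $\mathcal{H}^2_\beta(\bold{P})$ and $\Pi_\beta L=\Pi^n_\beta L=0$.

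Next I would prove a linear comparison lemma: if $g$ is a process with $\alpha^{-1}g\in L^2_{T,\beta}$ and $(Y^g,\mathbb{H}^g,N^g)$, $(Y^{g,n},H^{g,n},N^{g,n})$ denote the unique solutions of $\text{BSDE}(g,\xi)$ and $\text{BSDE}^n(g,\xi)$, then, reading off the construction in Lemma~\ref{BSDE lemma1} — both solutions are built from the \emph{same} martingale $L_t=\bold{E}[\xi+\int^T_0 g_s\,dA_s\mid\mathcal{F}_t]$ via $L=L_0+\mathbb{H}^g\bullet\mathbb{M}+N^g=L_0+H^{g,n}\bullet\mathbb{M}^n+N^{g,n}$ and $Y^g_t=L_t-\int^t_0 g_s\,dA_s=Y^{g,n}_t$ — one gets $Y^{g,n}=Y^g$ and $N^{g,n}-N^g=(\mathbb{H}^g-H^{g,n})\bullet\mathbb{M}$ (here $H^{g,n}$ is regarded as measure-valued, and $H^{g,n}\bullet\mathbb{M}^n=H^{g,n}\bullet\mathbb{M}$). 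Hence $\|N^{g,n}-N^g\|_{\mathcal{H}^2_\beta(\bold{P})}=\|\mathbb{H}^g-H^{g,n}\|_{\mathbb{M},\beta}$ and therefore $\|(Y^g,\mathbb{H}^g,N^g)-(Y^{g,n},H^{g,n},N^{g,n})\|_{\mathcal{S}_\beta}=\sqrt2\,\|\mathbb{H}^g-H^{g,n}\|_{\mathbb{M},\beta}$. Moreover, since $N^g\perp\mathbb{M}$, both decompositions of $L-L_0$ give $H^{g,n}\bullet\mathbb{M}^n=\Pi^n_\beta(L-L_0)=\Pi^n_\beta(\mathbb{H}^g\bullet\mathbb{M})$, while the decomposition $N^{g,n}=N^g+(\mathbb{H}^g-H^{g,n})\bullet\mathbb{M}$ (orthogonal in $\mathcal{H}^2_\beta(\bold{P})$) gives $\Pi_\beta N^{g,n}=(\mathbb{H}^g-H^{g,n})\bullet\mathbb{M}$; thus $\|\mathbb{H}^g-H^{g,n}\|_{\mathbb{M},\beta}$ equals both $\|(\mathrm{Id}-\Pi^n_\beta)(\mathbb{H}^g\bullet\mathbb{M})\|_{\mathcal{H}^2_\beta(\bold{P})}$ and $\|\Pi_\beta N^{g,n}\|_{\mathcal{H}^2_\beta(\bold{P})}$.

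For \eqref{gamma1} I would apply the lemma with $g=f(\cdot,Y^n_\cdot,H^n_\cdot)$ (admissible, as $(Y^n,H^n,N^n)\in\mathcal{S}_\beta$ and $(f,\xi)$ is $\beta$-standard): since $(Y^n,H^n,N^n)$ solves $\text{BSDE}^n(f,\xi)$ it also solves $\text{BSDE}^n(g,\xi)$ for this frozen $g$, so by uniqueness $(Y^{g,n},H^{g,n},N^{g,n})=(Y^n,H^n,N^n)$, while $(Y^g,\mathbb{H}^g,N^g)=\Phi(Y^n,H^n,N^n)$; by the lemma and $\Pi_\beta N=0$, $\|\Phi(Y^n,H^n,N^n)-(Y^n,H^n,N^n)\|_{\mathcal{S}_\beta}=\sqrt2\,\|\Pi_\beta(N^n-N)\|_{\mathcal{H}^2_\beta(\bold{P})}\le\sqrt2\,\|N-N^n\|_{\mathcal{H}^2_\beta(\bold{P})}$; since $(Y,\mathbb{H},N)=\Phi(Y,\mathbb{H},N)$, the contraction property of $\Phi$ and the triangle inequality give $\|(Y,\mathbb{H},N)-(Y^n,H^n,N^n)\|_{\mathcal{S}_\beta}\le\frac{\sqrt2}{1-\kappa}\|N-N^n\|_{\mathcal{H}^2_\beta(\bold{P})}$, i.e.\ \eqref{gamma1} with $\gamma=2/(1-\kappa)^2$, which depends only on $\beta$. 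For the convergence I would apply the lemma symmetrically with $g=f(\cdot,Y_\cdot,\mathbb{H}_\cdot)$: then $(Y^g,\mathbb{H}^g,N^g)=(Y,\mathbb{H},N)$ and $(Y^{g,n},H^{g,n},N^{g,n})=\Phi^n(Y,\mathbb{H},N)$, so $\|(Y,\mathbb{H},N)-\Phi^n(Y,\mathbb{H},N)\|_{\mathcal{S}_\beta}=\sqrt2\,\|(\mathrm{Id}-\Pi^n_\beta)(\mathbb{H}\bullet\mathbb{M})\|_{\mathcal{H}^2_\beta(\bold{P})}\to0$ as $n\to\infty$ because $\mathbb{H}\bullet\mathbb{M}\in\mathcal{I}_\beta$ and $\Pi^n_\beta\to\Pi_\beta$ strongly; since $(Y^n,H^n,N^n)=\Phi^n(Y^n,H^n,N^n)$, the contraction property of $\Phi^n$ and the triangle inequality give $\|(Y,\mathbb{H},N)-(Y^n,H^n,N^n)\|_{\mathcal{S}_\beta}\le\frac{\sqrt2}{1-\kappa}\|(\mathrm{Id}-\Pi^n_\beta)(\mathbb{H}\bullet\mathbb{M})\|_{\mathcal{H}^2_\beta(\bold{P})}\to0$.

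The main obstacle is the preliminary step that $\bigcup_n\mathcal{I}^n_\beta$ is dense in $\mathcal{I}_\beta$ — equivalently, the strong convergence $\Pi^n_\beta\to\Pi_\beta$ in the \emph{weighted} Hilbert space $\mathcal{H}^2_\beta(\bold{P})$ — which is exactly where Remark~\ref{remark: CONS} (a measure-valued complete orthonormal system of each $U'_{s,\omega}$ supported on $\{x_1,x_2,\dots\}$) is essential; the remainder is bookkeeping, namely checking that the linear comparison lemma is being applied to the correct frozen driver and that the contraction estimate for $\Phi^n$ still holds when its argument is taken from $\mathcal{S}_\beta\setminus\mathcal{S}^n_\beta$.
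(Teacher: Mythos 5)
Your proposal is correct, and it follows a genuinely different route from the paper. The paper proves both assertions by a single direct energy estimate: it applies It\^{o}'s formula to $e^{\beta K_t}|\delta_nY_t|^2$ for the difference $(\delta_nY,\delta_n\mathbb{H},\delta_nN)=(Y-Y^n,\mathbb{H}-H^n,N-N^n)$ of the two solutions themselves. The crux there is that the cross-variation $\langle\delta_n\mathbb{H}\bullet\mathbb{M},\delta_nN\rangle$ does not vanish (since $N^n$ is orthogonal only to $\mathbb{M}^n$); the paper shows it equals $\langle\widehat{\mathbb{H}}^n\bullet\mathbb{M},\delta_nN\rangle$ with $\widehat{\mathbb{H}}^n$ the tail of the expansion of $\mathbb{H}$ in the orthonormal system of Remark \ref{remark: CONS}, controls it by the Kunita--Watanabe inequality, and sends the tail to zero by dominated convergence; for (\ref{gamma1}) it reuses Kunita--Watanabe with a different splitting of constants. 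You instead never estimate the difference of the two nonlinear solutions directly: you treat them as fixed points of $\Phi$ and $\Phi^n$ with the common modulus $\kappa=\sqrt{2/(\beta-1)}$, reduce everything to the defect terms $\|\Phi^n(Y,\mathbb{H},N)-(Y,\mathbb{H},N)\|_{\mathcal{S}_\beta}$ and $\|\Phi(Y^n,H^n,N^n)-(Y^n,H^n,N^n)\|_{\mathcal{S}_\beta}$, and compute these exactly for frozen drivers via the Galtchouk--Kunita--Watanabe decompositions of one and the same martingale $L$ with respect to $\mathbb{M}$ and $\mathbb{M}^n$, i.e.\ as orthogonal projections in the weighted space $\mathcal{H}^2_\beta(\bold{P})$. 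The essential analytic ingredient is the same in both arguments --- the measure-valued orthonormal system of Remark \ref{remark: CONS} plus dominated convergence, which you use to get density of $\bigcup_n\mathcal{I}^n_\beta$ in $\mathcal{I}_\beta$ and hence $\Pi^n_\beta\to\Pi_\beta$ strongly, and which the paper uses to kill the tail $\widehat{\mathbb{H}}^n$. What your route buys is a cleaner conceptual picture (the linear case is isolated as an exact projection identity, and the nonlinearity is absorbed entirely into the contraction constant) and a transparent constant $\gamma=2/(1-\kappa)^2$; what the paper's route buys is a single self-contained inequality, (\ref{BSDEinequality2}), with no need to verify completeness of $\mathcal{H}^2_\beta(\bold{P})$, closedness of the integral subspaces, or that $\Phi^n$ contracts with the same modulus on all of $\mathcal{S}_\beta$ --- all of which you correctly flag and which do hold, but which you should spell out (in particular that the Kunita--Watanabe inequality justifies $\bold{E}[\int^T_0e^{\beta K_t}\,d[L,Z]_t]=\bold{E}[\int^T_0e^{\beta K_t}\,d\langle L,Z\rangle_t]=0$ for $L$ strongly orthogonal to $\mathbb{M}$ and $Z\in\mathcal{I}_\beta$, so that strong orthogonality really does become Hilbert-space orthogonality in the weighted norm).
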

\begin{proof}
For each $n\in\mathbb{N}$, define $\delta_nY=Y-Y^n$, $\delta_n\mathbb{H}=\mathbb{H}-H^n$, and $\delta_nN=N-N^n$. Then $\delta_n\mathbb{H}\in L^2_\beta(\mathbb{M},U')$, $\delta_nN\in\mathcal{H}^2_\beta(\bold{P})$, and $\delta_nY$ satisfies
\begin{equation*}
	\delta_nY_t=\int^T_t(f(s,Y_s,\mathbb{H}_s)-f(s,Y^n_s,H^n_s))\,dA_s-\int^T_t\delta_n\mathbb{H}_s\,d\mathbb{M}_s-\int^T_td\delta_nN_s
\end{equation*}
for all $t\in[0,T]$, $\bold{P}$-a.s. By noting that $K=\int^\cdot_0\alpha^2_s\,dA_s$ as well as $A$ is continuous by Assumption \ref{assumption: A is conti}, It\^{o}'s formula implies that
\begin{align*}
	d|\delta_nY_t|^2&=-2\delta_nY_t(f(t,Y_t,\mathbb{H}_t)-f(t,Y^n_t,H^n_t))\,dA_t\\
		&\hspace{6mm}+d\left[\int^\cdot_0\delta_n\mathbb{H}_s\,d\mathbb{M}_s\right]_t+d[\delta_nN]_t+2d\left[\int^\cdot_0\delta_n\mathbb{H}_s\,d\mathbb{M}_s,\delta_nN\right]_t\\
		&\hspace{6mm}+2\delta_nY_{t-}\delta_n\mathbb{H}_s\,d\mathbb{M}_t+2\delta_nY_{t-}\,d\delta_nN_t
\end{align*}
and that
\begin{equation}\label{in the proof of the main theorem}
	d(e^{\beta K_t}|\delta_nY_t|^2)=e^{\beta K_t}(\beta\alpha^2_t|\delta_nY_t|^2dA_t+d|\delta_nY_t|^2).
\end{equation}
Note that the processes $2\int^\cdot_0e^{\beta K_t}\delta_nY_{t-}\delta_n\mathbb{H}_t\,d\mathbb{M}_t$ and $2\int^\cdot_0e^{\beta K_t}\delta_nY_{t-}\,d\delta_nN_t$ are martingales. Hence, by integrating (\ref{in the proof of the main theorem}) on $[0,T]$ and taking expectations, we have
\begin{align*}
	&\bold{E}\left[e^{\beta K_T}|\delta_nY_T|^2\right]-\bold{E}\left[|\delta_nY_0|^2\right]\\
		&=\bold{E}\left[\int^T_0e^{\beta K_t}(\beta\alpha^2_t|\delta_nY_t|^2-2\delta_nY_t(f(t,Y_t,\mathbb{H}_t)-f(t,Y^n_t,H^n_t)))\,dA_t\right]\\
		&\hspace{6mm}+\bold{E}\left[\int^T_0e^{\beta K_t}\left(d\left\langle\int^\cdot_0\delta_n\mathbb{H}_s\,d\mathbb{M}_s\right\rangle_t+d\langle\delta_nN\rangle_t+2d\left\langle\int^\cdot_0\delta_n\mathbb{H}_s\,d\mathbb{M}_s,\delta_nN\right\rangle_t\right)\right]\\
		&=\bold{E}\left[\int^T_0e^{\beta K_t}(\beta\alpha^2_t|\delta_nY_t|^2-2\delta_nY_t(f(t,Y_t,\mathbb{H}_t)-f(t,Y^n_t,H^n_t)))\,dA_t\right]\\
		&\hspace{6mm}+\bold{E}\left[\int^T_0e^{\beta K_t}\left(|\delta_n\mathbb{H}_t|^2_{U'_t}\,dA_t+d\langle\delta_nN\rangle_t+2d\left\langle\int^\cdot_0\delta_n\mathbb{H}_s\,d\mathbb{M}_s,\delta_nN\right\rangle_t\right)\right],
\end{align*}
where we used the relation $\langle\int^\cdot_0\delta_n\mathbb{H}_s\,d\mathbb{M}_s\rangle_t=\int^t_0|\delta_n\mathbb{H}_s|^2_{U'_s}\,dA_s$, $t\in[0,T]$, in the second equality. Since $\delta_nY_T=0$, we then have
\begin{align}\label{BSDEinequality}\nonumber
	&\beta\|\alpha\delta_nY\|^2_{T,\beta}+\|\delta_n\mathbb{H}\|^2_{\mathbb{M},\beta}+\|\delta_nN\|^2_{\mathcal{H}^2_\beta(\bold{P})}\\\nonumber
	&\leq\bold{E}\left[\int^T_0e^{\beta K_t}2|\delta_nY_t|\cdot|f(t,Y_t,\mathbb{H}_t)-f(t,Y^n_t,H^n_t)|\,dA_t\right]\\
	&\hspace{6mm}+2\bold{E}\left[\int^T_0e^{\beta K_t}\left|d\left\langle\int^\cdot_0\delta_n\mathbb{H}_s\,d\mathbb{M}_s,\delta_nN\right\rangle_t\right|\right].
\end{align}
Since the driver $f$ satisfies the Lipschitz condition (\ref{Lipschitz}), it holds that
\begin{align}\label{Lipschitz estimate}\nonumber
	&\bold{E}\left[\int^T_0e^{\beta K_t}2|\delta_nY_t|\cdot|f(t,Y_t,\mathbb{H}_t)-f(t,Y^n_t,H^n_t)|\,dA_t\right]\\\nonumber
	&\leq\bold{E}\left[\int^T_0e^{\beta K_t}2|\delta_nY_t|\cdot(\eta_t|\delta_nY_t|+\theta_t|\delta_n\mathbb{H}_t|_{U'_t})\,dA_t\right]\\
	&\leq(2+\mu^2)\bold{E}\left[\int^T_0e^{\beta K_t}\alpha^2_t|\delta_nY_t|\,dA_t\right]+\frac{1}{\mu^2}\bold{E}\left[\int^T_0e^{\beta K_t}|\delta_n\mathbb{H}_t|^2_{U'_t}\,dA_t\right]
\end{align}
for all $\mu>0$, where we used the trivial inequality (\ref{trivial inequality}) in the second inequality.\par
For the estimate of the second term in the right hand side of the inequality (\ref{BSDEinequality}), consider the process $\langle\int^\cdot_0\delta_n\mathbb{H}_s\,d\mathbb{M}_s,\delta_nN\rangle$. Each $\mathbb{H}_{s,\omega}$ can be expanded in the covariance space $U'_{s,\omega}$ as
\begin{equation*}
	\mathbb{H}_{s,\omega}=\sum^\infty_{m=1}(\mathbb{H}_{s,\omega},e^m_{s,\omega})_{U'_{s,\omega}}e^m_{s,\omega},
\end{equation*}
where $\{e^m\}_{m\in\mathbb{N}}$ is a sequence of measure-valued predictable processes such that, for each $(s,\omega)\in[0,T]\times\Omega$, $e^m_{s,\omega}\in\text{span}\{\delta_{x_1},\dots,\delta_{x_m}\}$ for all $m\in\mathbb{N}$ and the sequence $\{e^m_{s,\omega}\}_{m\in\mathbb{N}}$ is a complete orthonormal system of $U'_{s,\omega}$; see Remark \ref{remark: CONS}. Define $\widehat{\mathbb{H}}^n$ by
\begin{equation*}
	\widehat{\mathbb{H}}^n_{s,\omega}=\sum^\infty_{m=n+1}(\mathbb{H}_{s,\omega},e^m_{s,\omega})_{U'_{s,\omega}}e^m_{s,\omega},\ (s,\omega)\in[0,T]\times\Omega.
\end{equation*}
Then obviously the process $\widehat{\mathbb{H}}^n$ is in $L^2(\mathbb{M},U')$. Furthermore, since $\mathbb{H}-\widehat{\mathbb{H}}^n$ is a linear combination of the processes $\{e^1,\dots,e^n\}$ and hence of the Dirac measures $\{\delta_{x_1},\dots,\delta_{x_n}\}$, we see that it is in $L^2(\mathbb{M}^n,\mathbb{R}^n)$. Since the martingale $\delta_nN=N-N^n$ is strongly orthogonal to $\mathbb{M}^n=(M^{x_1},\dots,M^{x_n})^{\text{tr}}$, we have that
\begin{align*}
	\left\langle\int^\cdot_0\delta_n\mathbb{H}_s\,d\mathbb{M}_s,\delta_nN\right\rangle&=\left\langle\int^\cdot_0\widehat{\mathbb{H}}^n_s\,d\mathbb{M}_s,\delta_nN\right\rangle+\left\langle\int^\cdot_0(\mathbb{H}_s-\widehat{\mathbb{H}}^n_s-H^n_s)\,d\mathbb{M}^n_s,\delta_nN\right\rangle\\
		&=\left\langle\int^\cdot_0\widehat{\mathbb{H}}^n_s\,d\mathbb{M}_s,\delta_nN\right\rangle.
\end{align*}
Hence, the second term in the right hand side of the inequality (\ref{BSDEinequality}) can be estimated as
\begin{align}\label{covariance estimate}\nonumber
	&2\bold{E}\left[\int^T_0e^{\beta K_t}\left|d\left\langle\int^\cdot_0\delta_n\mathbb{H}_s\,d\mathbb{M}_s,\delta_nN\right\rangle_t\right|\right]\\\nonumber
	&=2\bold{E}\left[\int^T_0e^{\beta K_t}\left|d\left\langle\int^\cdot_0\widehat{\mathbb{H}}^n_s\,d\mathbb{M}_s,\delta_nN\right\rangle_t\right|\right]\\\nonumber
	&\leq2\bold{E}\left[\int^T_0e^{\beta K_t}|\widehat{\mathbb{H}}^n_t|^2_{U'_t}\,dA_t\right]^{1/2}\bold{E}\left[\int^T_0e^{\beta K_t}\,d\langle\delta_nN\rangle_t\right]^{1/2}\\
	&\leq2\bold{E}\left[\int^T_0e^{\beta K_t}|\widehat{\mathbb{H}}^n_t|^2_{U'_t}\,dA_t\right]+\frac{1}{2}\bold{E}\left[\int^T_0e^{\beta K_t}\,d\langle\delta_nN\rangle_t\right],
\end{align}
where we used the Kunita--Watanabe inequality in the first inequality.\par
Combining (\ref{BSDEinequality}), (\ref{Lipschitz estimate}) and (\ref{covariance estimate}), we have that
\begin{equation}\label{BSDEinequality2}
	\left(\beta-2-\mu^2\right)\|\alpha\delta_nY\|^2_{T,\beta}+\left(1-\frac{1}{\mu^2}\right)\|\delta_n\mathbb{H}\|^2_{\mathbb{M},\beta}+\frac{1}{2}\|\delta_nN\|^2_{\mathcal{H}^2_\beta(\bold{P})}\leq2\bold{E}\left[\int^T_0e^{\beta K_t}|\widehat{\mathbb{H}}^n_t|^2_{U'_t}\,dA_t\right],
\end{equation}
for all $n\in\mathbb{N}$ and any constant $\mu>0$. Since $\beta>3$ by the assumption, we can choose a constant $\mu>0$ so that $\beta-2-\mu^2>0$ and $1-\frac{1}{\mu^2}>0$.\par
Now, since
\begin{equation*}
	e^{\beta K_t}|\widehat{\mathbb{H}}^n_t|^2_{U'_t}\leq e^{\beta K_t}|\mathbb{H}_t|^2_{U'_t}\in L^1(dA_t\otimes d\bold{P})
\end{equation*}
for all $n\in\mathbb{N}$ and
\begin{equation*}
	|\widehat{\mathbb{H}}^n_{t,\omega}|^2_{U'_{t,\omega}}=\sum^\infty_{m=n+1}|(\mathbb{H}_{t,\omega},e^m_{t,\omega})_{U'_{t,\omega}}|^2\overset{n\to\infty}{\longrightarrow}0
\end{equation*}
for all $(t,\omega)\in[0,T]\times\Omega$, the dominated convergence theorem yields that the right-hand side of the inequality (\ref{BSDEinequality2}) converges to $0$ as $n$ tends to infinity and hence we have that $\lim_{n\to\infty}\|(\delta_nY,\delta_n\mathbb{H},\delta_nN)\|_{\mathcal{S}_\beta}=0$.\par
To prove the inequality (\ref{gamma1}), consider again the second term in the right-hand side of the inequality (\ref{BSDEinequality}). By using the Kunita--Watanabe inequality and the inequality (\ref{trivial inequality}), we have that
\begin{align}\label{covariance estimate2}\nonumber
	&2\bold{E}\left[\int^T_0e^{\beta K_t}\left|d\left\langle\int^\cdot_0\delta_n\mathbb{H}_s\,d\mathbb{M}_s,\delta_nN\right\rangle_t\right|\right]\\\nonumber
	&\leq2\bold{E}\left[\int^T_0e^{\beta K_t}|\delta_n\mathbb{H}_t|^2_{U'_t}\,dA_t\right]^{1/2}\bold{E}\left[\int^T_0e^{\beta K_t}\,d\langle\delta_nN\rangle_t\right]^{1/2}\\
	&\leq\lambda^2\bold{E}\left[\int^T_0e^{\beta K_t}\,d\langle\delta_nN\rangle_t\right]+\frac{1}{\lambda^2}\bold{E}\left[\int^T_0e^{\beta K_t}|\delta_n\mathbb{H}_t|^2_{U'_t}\,dA_t\right]
\end{align}
for all $\lambda>0$. Combining (\ref{BSDEinequality}), (\ref{Lipschitz estimate}) and (\ref{covariance estimate2}), we have that
\begin{equation}
	\left(\beta-2-\mu^2\right)\|\alpha\delta_nY\|^2_{T,\beta}+\left(1-\frac{1}{\mu^2}-\frac{1}{\lambda^2}\right)\|\delta_n\mathbb{H}\|^2_{\mathbb{M},\beta}\leq(\lambda^2-1)\|\delta_nN\|^2_{\mathcal{H}^2_\beta(\bold{P})}
\end{equation}
for all $n\in\mathbb{N}$ and any constants $\mu,\lambda>0$. Since $\beta>3$, we can choose constants $\mu,\lambda>0$ so that $\beta-2-\mu^2>0$, $1-1/\mu^2-1/\lambda^2>0$ and $\lambda^2-1>0$. Then, (\ref{gamma1}) holds with $\gamma=(\lambda^2-1)\max\{(\beta-2-\mu^2)^{-1},(1-1/\mu^2-1/\lambda^2)^{-1}\}$.
\end{proof}
Suppose that the process $K=\int^\cdot_0\alpha^2_s\,dA_s$ is uniformly bounded. In this case, replacing $\alpha^2$ by $\widetilde{\alpha}^2=\alpha^2+1$ and $K$ by $\widetilde{K}=\int^\cdot_0\widetilde{\alpha}^2_s\,dA_s$ (then $\widetilde{K}$ is also bounded because of our assumption that the increasing process $A$ is bounded), we obtain the following corollary.
\begin{cor}\label{cor: BSDE}
Let $(f,\xi)$ be a data satisfying Assumption \ref{assumption: BSDEdata}. Assume that the process $K$ is uniformly bounded, $f(\cdot,0,0)\in L^2_T$, and $\xi\in L^2$. Then the following assertions hold.
\begin{enumerate}
\renewcommand{\labelenumi}{(\roman{enumi})}
\item
There exists a unique triple $(Y,\mathbb{H},N)\in L^2_T\times L^2(\mathbb{M},U')\times\mathcal{H}^2(\bold{P})$ such that the martingale $N$ is null at zero and strongly orthogonal to $\mathbb{M}=(M^x)_{x\in X}$ and that the triple $(Y,\mathbb{H},N)$ satisfies the infinite-dimensional BSDE (\ref{cylindricalBSDE}).
\item
For each $n\in\mathbb{N}$, there exists a unique triple $(Y^n,H^n,N^n)\in L^2_T\times L^2(\mathbb{M}^n,\mathbb{R}^n)\times\mathcal{H}^2(\bold{P})$ such that the martingale $N^n$ is null at zero and strongly orthogonal to $\mathbb{M}^n=(M^{x_1},\dots,M^{x_n})^{\text{tr}}$ and that the triple $(Y^n,H^n,N^n)$ satisfies the finite-dimensional BSDE (\ref{finite-dimBSDE}).
\item
For the above $(Y,\mathbb{H},N)$ and $(Y^n,H^n,N^n)$, $n\in\mathbb{N}$, it holds that
\begin{align*}
	\lim_{n\to\infty}Y^n=Y\hspace{4mm}&\text{in}\ L^2_T,\\
	\lim_{n\to\infty}H^n=\mathbb{H}\hspace{4mm}&\text{in}\ L^2(\mathbb{M},U'),\\
	\lim_{n\to\infty}N^n=N\hspace{4mm}&\text{in}\ \mathcal{H}^2(\bold{P}).
\end{align*}
\item
There exists a constant $\gamma>0$ depending only on $\|A_T\|_{L^\infty}$ and $\|K_T\|_{L^\infty}$ such that
\begin{equation}\label{gamma2}
	\|Y-Y^n\|^2_T+\|\mathbb{H}-H^n\|^2_\mathbb{M}\leq\gamma\|N-N^n\|^2_{\mathcal{H}^2(\bold{P})}
\end{equation}
for all $n\in\mathbb{N}$.
\end{enumerate}
\end{cor}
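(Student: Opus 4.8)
The plan is to derive Corollary~\ref{cor: BSDE} from Theorems~\ref{cylindricalBSDE existence and uniqueness}, \ref{finite-dimBSDE existence and uniqueness} and~\ref{BSDEapproximation} by the substitution indicated just before the statement. First I would enlarge the Lipschitz processes to $\widetilde{\eta}=\eta+1$, $\widetilde{\theta}=\theta$, which keeps (\ref{Lipschitz}) valid and produces the weights $\widetilde{\alpha}=\sqrt{\widetilde{\eta}+\widetilde{\theta}^{2}}=\sqrt{\alpha^{2}+1}\ge1$ and $\widetilde{K}_{\cdot}=\int^{\cdot}_{0}\widetilde{\alpha}^{2}_{s}\,dA_{s}=K_{\cdot}+A_{\cdot}-A_{0}$; the latter is uniformly bounded because $A$ is bounded (Assumption~\ref{assumption: cylindrical mart}) and $K$ is bounded by hypothesis, and it is continuous since $A$ is (Assumption~\ref{assumption: A is conti}), so the standing hypotheses of the section are preserved. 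As $\widetilde{\alpha}^{-1}\le1$, the hypotheses $f(\cdot,0,0)\in L^{2}_{T}$ and $\xi\in L^{2}$ give $\widetilde{\alpha}^{-1}f(\cdot,0,0)\in L^{2}_{T}$ and $\xi\in L^{2}$, so $(f,\xi)$ is $\beta$-standard for the modified data for every $\beta\ge0$; I would fix some $\beta>3$.

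Since $\widetilde{K}$ is bounded, Remark~\ref{remark: BSDE}~(\rnum{1}), applied with $\widetilde{K}$ in place of $K$, tells us that each $\beta$-weighted space coincides with the corresponding unweighted one and all the norms are equivalent, with constants controlled by $e^{\beta\|\widetilde{K}_{T}\|_{L^{\infty}}}$, hence by $\|A_{T}\|_{L^{\infty}}$ and $\|K_{T}\|_{L^{\infty}}$. Theorems~\ref{cylindricalBSDE existence and uniqueness} and~\ref{finite-dimBSDE existence and uniqueness} then provide solutions $(Y,\mathbb{H},N)\in\mathcal{S}_{\beta}$ and $(Y^{n},H^{n},N^{n})\in\mathcal{S}^{n}_{\beta}$; because $\widetilde{\alpha}\ge1$, membership in $\mathcal{S}_{\beta}$ (resp.\ $\mathcal{S}^{n}_{\beta}$) forces $Y\in L^{2}_{T}$, $\mathbb{H}\in L^{2}(\mathbb{M},U')$, $N\in\mathcal{H}^{2}(\bold{P})$ (resp.\ the finite-dimensional analogue), which yields the existence parts of (\rnum{1}) and (\rnum{2}). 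Claim~(\rnum{3}) is immediate from $\|(Y,\mathbb{H},N)-(Y^{n},H^{n},N^{n})\|_{\mathcal{S}_{\beta}}\to0$ in Theorem~\ref{BSDEapproximation}, because (using $\widetilde{\alpha}\ge1$ and the boundedness of $\widetilde{K}$) this norm dominates a fixed multiple of $\|Y-Y^{n}\|_{T}+\|\mathbb{H}-H^{n}\|_{\mathbb{M}}+\|N-N^{n}\|_{\mathcal{H}^{2}(\bold{P})}$; and (\ref{gamma1}) for the modified data becomes (\ref{gamma2}) once the weights are absorbed into the equivalence constants, so that the resulting $\gamma$ depends only on $\beta$ (fixed) and on $\|A_{T}\|_{L^{\infty}}$, $\|K_{T}\|_{L^{\infty}}$; this is (\rnum{4}).

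The only assertion not obtained by a pure translation is the uniqueness in (\rnum{1}) and (\rnum{2}): the theorems above give uniqueness inside $\mathcal{S}_{\beta}$ (resp.\ $\mathcal{S}^{n}_{\beta}$), which is a priori strictly smaller than $L^{2}_{T}\times L^{2}(\mathbb{M},U')\times\mathcal{H}^{2}(\bold{P})$ when $\widetilde{\alpha}$ is unbounded. I would bridge this by showing that any triple in the larger space solving the BSDE (with $N$ null at zero and strongly orthogonal to $\mathbb{M}$) automatically lies in $\mathcal{S}_{\beta}$, so that the theorem's uniqueness applies. The key observation is that $Y$ is $\cadlag$, hence $\bold{P}$-a.s.\ bounded on $[0,T]$; writing the BSDE as $Y_{t}=\xi-(L_{T}-L_{t})+\int^{T}_{t}f(s,Y_{s},\mathbb{H}_{s})\,dA_{s}$ with $L=\int^{\cdot}_{0}\mathbb{H}_{s}\,d\mathbb{M}_{s}+N\in\mathcal{H}^{2}(\bold{P})$, the Lipschitz bound (\ref{Lipschitz}) together with a pathwise backward Gronwall inequality (using that $\int^{\cdot}_{0}\eta_{s}\,dA_{s}\le\widetilde{K}$ is bounded and continuous) gives $\sup_{t}|Y_{t}|\le e^{\|\widetilde{K}_{T}\|_{L^{\infty}}}\big(|\xi|+2\sup_{t}|L_{t}|+\int^{T}_{0}(|f(s,0,0)|+\theta_{s}|\mathbb{H}_{s}|_{U'_{s}})\,dA_{s}\big)$; Doob's inequality, Cauchy--Schwarz against $dA_{s}$, and the boundedness of $\widetilde{K}$ make the right-hand side square-integrable, so $\bold{E}[\sup_{t}|Y_{t}|^{2}]<\infty$, and then $\int^{T}_{0}\widetilde{\alpha}^{2}_{s}|Y_{s}|^{2}\,dA_{s}\le(\sup_{t}|Y_{t}|)^{2}\widetilde{K}_{T}$ shows $\widetilde{\alpha}Y\in L^{2}_{T,\beta}$, i.e.\ $(Y,\mathbb{H},N)\in\mathcal{S}_{\beta}$. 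This Gronwall/maximal-inequality bootstrap is the step I expect to require the most care; everything else is a routine consequence of the substitution and the norm equivalences.
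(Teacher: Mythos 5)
Your proposal is correct and follows the same route the paper intends: the paper's entire ``proof'' of this corollary is the one-sentence remark preceding it (replace $\alpha^2$ by $\widetilde{\alpha}^2=\alpha^2+1$ so that $\widetilde{K}$ is bounded, then invoke the norm equivalences of Remark \ref{remark: BSDE}~(i) and Theorems \ref{cylindricalBSDE existence and uniqueness}, \ref{finite-dimBSDE existence and uniqueness}, and \ref{BSDEapproximation}), which is exactly your first two paragraphs. The one place you go beyond the paper is the uniqueness in (i)--(ii): you correctly note that when $\widetilde{\alpha}$ is unbounded, $\mathcal{S}_\beta$ is a priori a proper subspace of $L^2_T\times L^2(\mathbb{M},U')\times\mathcal{H}^2(\bold{P})$, and your backward Gronwall/Doob bootstrap showing that any solution in the larger space satisfies $\bold{E}[\sup_{t}|Y_t|^2]<\infty$ (hence lies in $\mathcal{S}_\beta$, so the theorem's uniqueness applies) is sound and fills a gap the paper leaves implicit.
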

\begin{rem}
Note that the constant $\gamma>0$ appearing in (\ref{gamma1}) and (\ref{gamma2}) does not depend on the choice of the countable dense subset $\{x_1,x_2,\dots\}$ of $X$.
\end{rem}


\section{Applications to approximate hedging in bond markets}\label{section: application}
\paragraph{}
In this section, we consider a bond market modeled by the time evolution of a discounted price curve $\bar{\mathbb{P}}=((\bar{P}^T_t)_{t\in[0,T^*]})_{T\in[0,T^*]}$ of zero-coupon bonds defined on a filtered probability space $(\Omega,\mathcal{F},(\mathcal{F}_t)_{t\in[0,T^*]},\bold{P})$ satisfying the usual conditions. We assume that $T^*>0$ is finite and constant. For each $0\leq t\leq T\leq T^*$, the random variable $\bar{P}^T_t$ represents the discounted price at time $t$ of a zero-coupon bond maturing at time $T$. For $t>T$, the discounted price is usually assumed so that $\bar{P}^T_t=\bar{P}^T_T$; this assumption represents the convention that after maturity the bond is automatically transferred into the bank account. We assume that the market is frictionless and all zero-coupon bonds maturing at time $T\in[0,T^*]$ are continuously tradable.\par
Note that, in this section, the time evolution will be considered in the interval $[0,T^*]$ and each $T\in[0,T^*]$ will represent a maturity of zero-coupon bonds. With the notation of Sections \ref{section: cylindrical mart} and \ref{section: BSDE}, we take $X=[0,T^*]$ as a parameter set of a family of processes.\par
We impose the following assumptions on the market model.
\begin{assum}\label{structure condition}
\begin{itemize}\item
The discounted price curve $\bar{\mathbb{P}}=((\bar{P}^T_t)_{t\in[0,T^*]})_{T\in[0,T^*]}$ of zero-coupon bonds can be expressed as
\begin{equation*}
	\bar{\mathbb{P}}=\mathbb{M}+\int^\cdot_0b_s(\cdot)\,dA_s,
\end{equation*}
i.e., for each $T\in[0,T^*]$,
\begin{equation*}
	\bar{P}^T_t=M^T_t+\int^t_0b_s(T)\,dA_s,\ t\in[0,T^*];
\end{equation*}
\item
$\mathbb{M}=((M^T_t)_{t\in[0,T^*]})_{T\in[0,T^*]}$ is a square-integrable cylindrical martingale on $\mathcal{C}=\mathcal{C}([0,T^*])$ defined on the filtered probability space $(\Omega,\mathcal{F},(\mathcal{F}_t)_{t\in[0,T^*]},\bold{P})$ satisfying Assumptions \ref{assumption: cylindrical mart} and \ref{assumption: A is conti} with $T$ replaced by $T^*$;
\item
$b=(b_s(\cdot))_{s\in[0,T^*]}$ is a $\mathcal{C}$-valued process for which there exists a $U'$-valued predictable process $\lambda$ such that
\begin{equation}\label{MPR}
	(\mathcal{Q}_{s,\omega}\lambda_{s,\omega})(T)=b_{s,\omega}(T)
\end{equation}
for all $T\in[0,T^*]$ and $(s,\omega)\in[0,T^*]\times\Omega$ and that the nondecreasing process $\mathcal{K}_t=\int^t_0|\lambda_s|^2_{U'_s}\,dA_s$ is uniformly bounded.
\end{itemize}
\end{assum}
\begin{rem}
\begin{enumerate}
\renewcommand{\labelenumi}{(\roman{enumi})}
\item
Assumption \ref{structure condition} is a natural generalization to our infinite-dimensional setting of the so-called ``structure condition'' that is usually imposed on finite-dimensional semimartingales in mathematical finance. This condition is related to a no-arbitrage condition at least in finite-dimensional market models; see \cite{a_Delbaen-Schachermayer_95} and \cite{a_Schweizer_95}. As in the finite-dimensional case, we call the nondecreasing process $\mathcal{K}=\int^\cdot_0|\lambda_s|^2_{U'_s}\,dA_s$ the mean-variance tradeoff process.
\item
For the convention that $\bar{\mathbb{P}}^T_t=\bar{\mathbb{P}}^T_T$ for any $t>T$, we should assume that $M^T_t=M^T_T$ and $b_t(T)=0$ for all $t>T$. However, this assumption is not necessary for our results.
\end{enumerate}
\end{rem}
Note that the $\mathcal{C}$-valued process $b$ is in fact a $U$-valued predictable process because of the equality (\ref{MPR}). Furthermore, since the mean-variance tradeoff process $\mathcal{K}$ is bounded, we have that, for any $\mathbb{H}\in L^2(\mathbb{M},U')$,
\begin{align}\label{drift estimate}\nonumber
	\bold{E}\left[\left(\int^{T^*}_0|\mathbb{H}_s(b_s)|\,dA_s\right)^2\right]&=\bold{E}\left[\left(\int^{T^*}_0|(\mathbb{H}_s,\lambda_s)_{U'_s}|\,dA_s\right)^2\right]\\\nonumber
		&\leq\bold{E}\left[\left(\int^{T^*}_0|\mathbb{H}_s|_{U'_s}|\lambda_s|_{U'_s}\,dA_s\right)^2\right]\\\nonumber
		&\leq\bold{E}\left[\mathcal{K}_{T^*}\int^{T^*}_0|\mathbb{H}_s|^2_{U'_s}\,dA_s\right]\\
		&\leq\|\mathcal{K}_{T^*}\|_{L^\infty}\|\mathbb{H}\|^2_\mathbb{M}<\infty,
\end{align}
where in the first equality we use the equality (\ref{MPR}) and the fact that the map $\mathcal{Q}_{s,\omega}$ is isomorphism from $U'_{s,\omega}$ to $U_{s,\omega}$ for all $(s,\omega)\in[0,T^*]\times\Omega$, and in the third inequality we use the Cauchy--Schwarz inequality. Hence, for any $\mathbb{H}\in L^2(\mathbb{M},U')$, we can define the stochastic integral $\mathbb{H}\bullet\bar{\mathbb{P}}=\int^\cdot_0\mathbb{H}_s\,d\bar{\mathbb{P}}_s$ with respect to the infinite-dimensional semimartingale $\bar{\mathbb{P}}$ as a square-integrable semimartingale, i.e.,
\begin{equation*}
	\int^t_0\mathbb{H}_s\,d\bar{\mathbb{P}}_s=\int^t_0\mathbb{H}_s\,d\mathbb{M}_s+\int^t_0\mathbb{H}_s(b_s)\,dA_s,\ t\in[0,T^*].
\end{equation*}
See De Donno \cite{a_DeDonno_07} for more detailed discussions about generalized stochastic integrations with respect to infinite-dimensional semimartingales.\par
Let $\{T_1,T_2,\dots\}$ be a countable dense subset of the interval $[0,T^*]$. For each $n\in\mathbb{N}$, consider the $n$-dimensional processes $\bar{\mathbb{P}}^n=(\bar{P}^{T_1},\dots,\bar{P}^{T_n})^{\text{tr}}$ and $\mathbb{M}^n=(M^{T_1},\dots,M^{T_n})^{\text{tr}}$. Then for any $H\in L^2(\mathbb{M}^n,\mathbb{R}^n)$ the $n$-dimensional stochastic integral $H\bullet\bar{\mathbb{P}}^n=\int^\cdot_0H_s\,d\bar{\mathbb{P}}^n_s$ is well-defined and is a square-integrable semimartingale with the canonical decomposition
\begin{equation*}
	\int^t_0H_s\,d\bar{\mathbb{P}}^n_s=\int^t_0H_s\,d\mathbb{M}^n_s+\sum^n_{i=1}\int^t_0H^i_sb_s(T_i)\,dA_s.
\end{equation*}
We often identify the $\mathbb{R}^n$-valued process $H=(H^1,\dots,H^n)^{\text{tr}}$ with the measure-valued process $H=\sum^n_{i=1}H^i\delta_{T^i}$ as in Section \ref{section: BSDE}. We call the $n$-dimensional semimartingale $\bar{\mathbb{P}}^n$ the $n$-th small market.\par
Let us introduce the notion of trading strategies. In small markets, we adopt the definition of (finite-dimensional) strategies in Schweizer \cite{ic_Schweizer_01,ic_Schweizer_08}.
\begin{defi}\label{def: strategy in small markets}
Fix $n\in\mathbb{N}$. An $L^2$-strategy in the $n$-th small market is a pair $\phi=(H,\eta)$, where $H\in L^2(\mathbb{M}^n,\mathbb{R}^n)$ and $\eta$ is an real-valued adapted process such that the (discounted) value process
\begin{equation*}
	V_t(\phi)=\eta_t+H_t(\bar{\mathbb{P}}^n_t)=\eta_t+\sum^n_{i=1}H^i_t\bar{P}^{T_i}_t
\end{equation*}
is right-continuous and square-integrable, i.e., $V_t(\phi)\in L^2(\bold{P})$ for all $t\in[0,T^*]$.\par
Let $\phi=(H,\eta)$ be an $L^2$-strategy in the $n$-th small market. The (cumulative) cost process of $\phi$ is
\begin{equation*}
	C_t(\phi)=V_t(\phi)-\int^t_0H_s\,d\bar{\mathbb{P}}^n_s.
\end{equation*}
$\phi$ is called self-financing if the cost process satisfies $C_t(\phi)\equiv c$ for some $\mathcal{F}_0$-measurable random variable $c\in L^2(\mathcal{F}_0,\bold{P})$, and mean-self-financing if $C(\phi)$ is a martingale (which is then square-integrable).\par
The risk process of $\phi$ is
\begin{equation*}
	R_t(\phi)=\bold{E}\left[(C_{T^*}(\phi)-C_t(\phi))^2\relmiddle|\mathcal{F}_t\right],\ t\in[0,T^*].
\end{equation*}
We set $R(\phi)=\bold{E}\left[(C_{T^*}(\phi)-C_0(\phi))^2\right]$ and call it the total risk of $\phi$.
\end{defi}
For an $L^2$-strategy $\phi=(H,\eta)$ in a small market, the finite-dimensional integrand $H$ represents the amount of holdings of zero-coupon bonds with fixed (finite number of) maturities, and the real adapted process $\eta$ represents the position in the bank account. Then the stochastic integral $\int^\cdot_0H_s\,d\bar{\mathbb{P}}^n_s$ is the cumulative gains corresponding to the investment.
\begin{prop}\label{prop: finite-dimensional strategy}
\begin{enumerate}
\renewcommand{\labelenumi}{(\roman{enumi})}
\item
For given right-continuous, adapted, square-integrable process $C$ and $H\in L^2(\mathbb{M}^n,\mathbb{R}^n)$, there exists a unique real-valued adapted process $\eta$ such that $\phi=(H,\eta)$ is an $L^2$-strategy in the $n$-th small market having the cost process $C$.
\item
For given $\xi\in L^2(\mathcal{F}_{T^*},\bold{P})$ and $H\in L^2(\mathbb{M}^n,\mathbb{R}^n)$, there exist a unique square-integrable martingale $C$ and a unique real-valued adapted process $\eta$ such that $\phi=(H,\eta)$ is an $L^2$-strategy in the $n$-th small market having the cost process $C$ and satisfying $V_{T^*}(\phi)=\xi$ $\bold{P}$-a.s.
\end{enumerate}
Here, the term ``unique'' means ``unique up to indistinguishability''.
\end{prop}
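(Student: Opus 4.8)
The plan is to prove both parts by direct construction, exploiting the fact that, once $H$ is fixed, the equations defining an $L^2$-strategy in a small market are triangular in the pair $(\eta,C)$. For part (i) the identities $V_t(\phi)=\eta_t+H_t(\bar{\mathbb{P}}^n_t)$ and $C_t(\phi)=V_t(\phi)-\int_0^t H_s\,d\bar{\mathbb{P}}^n_s$ leave no freedom: imposing $C_t(\phi)=C_t$ forces
\[
\eta_t=C_t+\int_0^t H_s\,d\bar{\mathbb{P}}^n_s-H_t(\bar{\mathbb{P}}^n_t),\qquad t\in[0,T^*].
\]
I would take the right-hand side as the definition of $\eta$ and verify the conditions of Definition \ref{def: strategy in small markets}.

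That $\eta$ is real-valued and adapted is immediate, since $C$, the semimartingale integral $\int_0^\cdot H_s\,d\bar{\mathbb{P}}^n_s$, and $H_t(\bar{\mathbb{P}}^n_t)=\sum_{i=1}^n H^i_t\bar{P}^{T_i}_t$ all are; with this choice $V_t(\phi)=C_t+\int_0^t H_s\,d\bar{\mathbb{P}}^n_s$, which is right-continuous. The only step requiring care is square-integrability of $V_t(\phi)$ for each $t$: here $C_t\in L^2(\bold{P})$ by hypothesis, while for the integral one uses the canonical decomposition $\int_0^\cdot H_s\,d\bar{\mathbb{P}}^n_s=\int_0^\cdot H_s\,d\mathbb{M}^n_s+\sum_{i=1}^n\int_0^\cdot H^i_sb_s(T_i)\,dA_s$ — the martingale part lies in $\mathcal{H}^2(\bold{P})$, so its supremum is in $L^2(\bold{P})$ by Doob, and the finite-variation part is dominated in absolute value by $\int_0^{T^*}|H_s(b_s)|\,dA_s$, which lies in $L^2(\bold{P})$ by the estimate (\ref{drift estimate}) applied to the measure-valued process $H=\sum_{i=1}^n H^i\delta_{T_i}\in L^2(\mathbb{M},U')$; this is where the structure condition of Assumption \ref{structure condition}, through boundedness of the mean-variance tradeoff process, is used. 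For uniqueness in (i): if $(H,\eta)$ and $(H,\eta')$ both have cost process $C$, then $V_t(\phi)=C_t+\int_0^t H_s\,d\bar{\mathbb{P}}^n_s=V_t(\phi')$ for every fixed $t$, $\bold{P}$-a.s., so the two right-continuous value processes are indistinguishable, and since $\eta_t-\eta'_t=V_t(\phi)-V_t(\phi')$ so is $\eta-\eta'$.

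For part (ii) I would use part (i) as a black box. For any right-continuous, adapted, square-integrable $C$, part (i) yields a strategy $\phi=(H,\eta)$ with cost process $C$ and $V_{T^*}(\phi)=C_{T^*}+\int_0^{T^*}H_s\,d\bar{\mathbb{P}}^n_s$; hence $V_{T^*}(\phi)=\xi$ $\bold{P}$-a.s.\ forces $C_{T^*}=\xi-\int_0^{T^*}H_s\,d\bar{\mathbb{P}}^n_s=:\zeta$, which lies in $L^2(\mathcal{F}_{T^*},\bold{P})$ by the integrability just established. I would then \emph{define} $C$ to be the right-continuous version of $\bigl(\bold{E}[\zeta|\mathcal{F}_t]\bigr)_{t\in[0,T^*]}$, a square-integrable martingale with $C_{T^*}=\zeta$, and take $\eta$ as produced by part (i); then $V_{T^*}(\phi)=\zeta+\int_0^{T^*}H_s\,d\bar{\mathbb{P}}^n_s=\xi$, giving existence. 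Uniqueness of $C$ follows because any square-integrable martingale with the stated property must satisfy $C_{T^*}=\zeta$, hence coincide with $\bigl(\bold{E}[\zeta|\mathcal{F}_t]\bigr)_t$ up to indistinguishability, and then $\eta$ is unique by part (i).

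I do not expect a genuine obstacle: the argument is essentially bookkeeping about the decomposition $V=C+(H\bullet\bar{\mathbb{P}}^n)$ plus conditioning. The only truly analytic point is the square-integrability verification in part (i), which rests on the a priori estimate (\ref{drift estimate}) and therefore on the boundedness of the mean-variance tradeoff process assumed in Assumption \ref{structure condition}.
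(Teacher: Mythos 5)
Your proposal is correct and follows essentially the same route as the paper: in (i) you define $\eta_t=C_t+\int_0^tH_s\,d\bar{\mathbb{P}}^n_s-H_t(\bar{\mathbb{P}}^n_t)$ and deduce uniqueness from right-continuity of the value process, and in (ii) you take $C$ to be the right-continuous version of $\bold{E}[\xi-\int_0^{T^*}H_s\,d\bar{\mathbb{P}}^n_s\,|\,\mathcal{F}_t]$ and invoke (i), exactly as the paper does. The only difference is that you spell out the square-integrability of $V(\phi)$ via Doob's inequality and the estimate (\ref{drift estimate}), a verification the paper leaves implicit.
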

\begin{proof}
(\rnum{1}): Define $\phi=(H,\eta)$ via
\begin{equation*}
	\eta_t=\int^t_0H_s\,d\bar{\mathbb{P}}^n_s-H_t(\bar{\mathbb{P}}^n_t)+C_t,\ t\in[0,T^*].
\end{equation*}
Then the value process can be written as $V(\phi)=\int^\cdot_0H_s\,d\bar{\mathbb{P}}^n_s+C$. By the assumptions the process $V(\phi)$ is right-continuous and square-integrable and hence $\phi$ is an $L^2$-strategy in the $n$-th small market. The cost process $C(\phi)$ of $\phi$ coincides with $C$. Let $\phi'=(H,\eta')$ be an $L^2$-strategy in the $n$-th small market with the cost process $C$. Then the right-continuity of the value processes $V(\phi)$ and $V(\phi')$ yields that these two processes are indistinguishable and hence $\eta$ and $\eta'$ are also indistinguishable.\\
(\rnum{2}): Set
\begin{equation*}
	\widetilde{C}_t=\bold{E}\left[\xi-\int^{T^*}_0H_s\,d\bar{\mathbb{P}}^n_s\relmiddle|\mathcal{F}_t\right],\ t\in[0,T^*],
\end{equation*}
and let $C$ be the right-continuous version of $\widetilde{C}$. Then the process $C$ is a right-continuous square-integrable martingale. For the pair $(C,H)$, define a real-valued adapted process $\eta$ as in (\rnum{1}). Then $\phi=(H,\eta)$ is an $L^2$-strategy in the $n$-th small market with the cost process $C$ and satisfies $V_{T^*}(\phi)=\xi$. Let $(C',\eta')$ be a pair satisfying the assertion in (\rnum{2}). Then the martingale property of $C'$ yields that
\begin{equation*}
	C'_t=\mathbb{E}\left[C'_{T^*}\relmiddle|\mathcal{F}_t\right]=\bold{E}\left[V_{T^*}(\phi')-\int^{T^*}_0H_s\,d\bar{\mathbb{P}}^n_s\relmiddle|\mathcal{F}_t\right]=\bold{E}\left[\xi-\int^{T^*}_0H_s\,d\bar{\mathbb{P}}^n_s\relmiddle|\mathcal{F}_t\right]=C_t
\end{equation*}
$\bold{P}$-a.s. for all $t\in[0,T^*]$ and then the right-continuity of $C$ and $C'$ yields that these two martingales are indistinguishable. Now the indistinguishability of the processes $\eta$ and $\eta'$ follows from (\rnum{1}).
\end{proof}
Hence, in small markets, an $L^2$-strategy can be characterized by a finite-dimensional integrand $H$ and a cost process $C$. Moreover, For any claim $\xi\in L^2(\mathcal{F}_{T^*},\bold{P})$, there exists a mean-self-financing $L^2$-strategy in small markets which achieves the claim $\xi$ with a cost process $C$, and this martingale $C$ is completely determined by the finite-dimensional integrand $H$ (and of course by the claim $\xi$).\par
With this fact in mind, let us define a generalized strategy in the whole market.
\begin{defi}
We call a pair $\Phi=(\mathbb{H},C)$ a generalized $L^2$-strategy if $\mathbb{H}\in L^2(\mathbb{M},U')$ and if $C$ is a real-valued, right-continuous, square-integrable process. The generalized value process corresponding to $\Phi$ is defined by
\begin{equation}
	V_t(\Phi)=\int^t_0\mathbb{H}_s\,d\bar{\mathbb{P}}_s+C_t,\ t\in[0,T^*].
\end{equation}
$\Phi$ is called self-financing if the cost process satisfies $C_t(\phi)\equiv c$ for some $\mathcal{F}_0$-measurable random variable $c\in L^2(\mathcal{F}_0,\bold{P})$, and mean-self-financing if $C$ is a (square-integrable) martingale.
\end{defi}
\begin{rem}
If $\mathbb{H}$ is in $L^2(\mathbb{M},\mathcal{M})$, then it can be regarded as the holdings of zero-coupon bonds with possibly infinitely many maturities. In general, however, a generalized $L^2$-strategy $\Phi=(\mathbb{H},C)$ does not represent a portfolio of the trader in the classical sense. Since each $\bar{\mathbb{P}}_t$ is not necessarily in $U_t$, the position ``$\mathbb{H}_t(\bar{\mathbb{P}}_t)$'' in zero-coupon bonds is not defined in general, and hence the position ``$\eta$'' in the bank account cannot be identified by the generalized strategy $\Phi$. A generalized strategy is fictitious but can be approximated by a sequence of realistic strategies in small markets, and so we should take its reasonable approximate sequence. 
\end{rem}
Before constructing reasonable approximations for generalized strategies, let us introduce the notion of approximate attainability of a claim.
\begin{defi}
A contingent claim is a random variable $\xi\in L^2(\mathcal{F}_{T^*},\bold{P})$. The claim $\xi$ is said to be approximately attainable if there exists a self-financing generalized $L^2$-strategy $\Phi=(\mathbb{H},c)$ such that
\begin{equation*}
	\xi=V_{T^*}(\Phi)=c+\int^{T^*}_0\mathbb{H}_s\,d\bar{\mathbb{P}}_s\ \bold{P}\text{-a.s.}
\end{equation*}
The market $\bar{\mathbb{P}}$ is said to be approximately complete if all contingent claims are approximately attainable.
\end{defi}
\begin{rem}
It is worth noting that our definitions of the set of contingent claims and approximate completeness are based on the subjective measure $\bold{P}$.
\end{rem}
Proposition \ref{approximately attainable} below characterizes the approximate attainability of a claim in terms of the solution of the corresponding BSDE. Set $f(\omega,t,y,\mathbb{H}_{t,\omega})=-\mathbb{H}_{t,\omega}(b_{t,\omega})$, $\alpha^2_t=1+|\lambda_t|^2_{U'_t}$, and $K_t=\int^t_0\alpha^2_s\,dA_s=\mathcal{K}_t+A_t$ with the notations in Section \ref{section: BSDE} and recall the assumptions that the mean-variance tradeoff process $\mathcal{K}=\int^\cdot_0|\lambda_t|^2_{U'_t}\,dA_t$ is bounded and that the increasing process $A$ is continuous and bounded. Then, for any claim $\xi\in L^2(\mathcal{F}_{T^*},\bold{P})$, the data $(f,\xi)$ satisfies all the assumptions in Corollary \ref{cor: BSDE}. Hence, there exists a unique solution $(Y,\mathbb{H},N)\in L^2_{T^*}\times L^2(\mathbb{M},U')\times\mathcal{H}^2(\bold{P})$ of the BSDE
\begin{equation}\label{hedgeBSDE}
	Y_t=\xi-\int^{T^*}_t\mathbb{H}_s(b_s)\,dA_s-\int^{T^*}_t\mathbb{H}_s\,d\mathbb{M}_s-\int^{T^*}_tdN_s,\ t\in[0,T^*].
\end{equation}
\begin{prop}\label{approximately attainable}
For a claim $\xi\in L^2(\mathcal{F}_{T^*},\bold{P})$, the following conditions are equivalent.
\begin{enumerate}
\renewcommand{\labelenumi}{(\roman{enumi})}
\item
$\xi$ is approximately attainable.
\item
The unique solution $(Y,\mathbb{H},N)\in L^2_{T^*}\times L^2(\mathbb{M},U')\times\mathcal{H}^2(\bold{P})$ of $\text{BSDE}(f,\xi)$ with $f(\omega,t,y,\mathbb{H}_{t,\omega})=-\mathbb{H}_{t,\omega}(b_{t,\omega})$ satisfies $N\equiv0$.
\end{enumerate}
Moreover, in this case, the self-financing generalized $L^2$-strategy which attains the claim $\xi$ is $\Phi=(\mathbb{H},Y_0)$ and the corresponding value process is $V(\Phi)=Y$.
\end{prop}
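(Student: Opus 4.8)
The plan is to read off the BSDE $(\ref{hedgeBSDE})$ with $N\equiv0$ as \emph{literally} the self-financing identity $\xi=Y_0+\int^{T^*}_0\mathbb{H}_s\,d\bar{\mathbb{P}}_s$, and conversely to show that any self-financing generalized $L^2$-strategy attaining $\xi$ gives rise to a solution of $\text{BSDE}(f,\xi)$ with vanishing orthogonal part; the two directions of the equivalence then both reduce to the uniqueness assertion of Corollary \ref{cor: BSDE}. Throughout I will use the canonical decomposition $\int^{T^*}_0\mathbb{H}_s\,d\bar{\mathbb{P}}_s=\int^{T^*}_0\mathbb{H}_s\,d\mathbb{M}_s+\int^{T^*}_0\mathbb{H}_s(b_s)\,dA_s$.

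For the implication ``$N\equiv0\ \Rightarrow\ \xi$ approximately attainable'', I would first evaluate $(\ref{hedgeBSDE})$ at $t=0$ and substitute the above decomposition to obtain $\xi=Y_0+\int^{T^*}_0\mathbb{H}_s\,d\bar{\mathbb{P}}_s$. Next I would note that $Y_0$ is $\mathcal{F}_0$-measurable (since $Y$ is adapted) and that $Y_0\in L^2(\bold{P})$, the latter being a consequence of Lemma \ref{lemma: mart} (which gives $\sup_{t\in[0,T^*]}|Y_t|\in L^2(\bold{P})$); hence $\Phi=(\mathbb{H},Y_0)$ is a genuine self-financing generalized $L^2$-strategy with $V_{T^*}(\Phi)=\xi$, so $\xi$ is approximately attainable. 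Reading off $V(\Phi)=Y_0+\int^\cdot_0\mathbb{H}_s\,d\bar{\mathbb{P}}_s$ and comparing with $(\ref{hedgeBSDE})$ identifies $V(\Phi)$ with $Y$, which is the ``moreover'' claim in this case.

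For the converse, I would start from a self-financing generalized $L^2$-strategy $\Phi'=(\mathbb{H}',c)$ with $\xi=c+\int^{T^*}_0\mathbb{H}'_s\,d\bar{\mathbb{P}}_s$, set $\widetilde{Y}_t=c+\int^t_0\mathbb{H}'_s\,d\bar{\mathbb{P}}_s$, and subtract to get $\xi-\widetilde{Y}_t=\int^{T^*}_t\mathbb{H}'_s(b_s)\,dA_s+\int^{T^*}_t\mathbb{H}'_s\,d\mathbb{M}_s$, so that $(\widetilde{Y},\mathbb{H}',0)$ satisfies $(\ref{hedgeBSDE})$. The step that requires actual work is checking that this triple lies in $L^2_{T^*}\times L^2(\mathbb{M},U')\times\mathcal{H}^2(\bold{P})$, with the null martingale trivially null at zero and strongly orthogonal to $\mathbb{M}$: the membership $\mathbb{H}'\in L^2(\mathbb{M},U')$ is part of the definition of a generalized strategy, while $\widetilde{Y}\in L^2_{T^*}$ follows once one shows $\sup_{t\in[0,T^*]}|\widetilde{Y}_t|\in L^2(\bold{P})$ --- handling the martingale part $\int^\cdot_0\mathbb{H}'_s\,d\mathbb{M}_s$ by Doob's inequality and the finite-variation part $\int^\cdot_0\mathbb{H}'_s(b_s)\,dA_s$ by the estimate $(\ref{drift estimate})$, which is where the boundedness of the mean-variance tradeoff process from Assumption \ref{structure condition} and the boundedness of $A$ enter. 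Once $(\widetilde{Y},\mathbb{H}',0)$ is known to be a solution of $\text{BSDE}(f,\xi)$, the uniqueness in Corollary \ref{cor: BSDE} forces $(\widetilde{Y},\mathbb{H}',0)=(Y,\mathbb{H},N)$; in particular $N\equiv0$, and at the same time $\mathbb{H}=\mathbb{H}'$ and $Y_0=c$, which delivers the ``moreover'' assertion in full generality (including the uniqueness of the attaining strategy). The main --- and essentially only --- obstacle is this integrability bookkeeping; the algebraic matching of the self-financing identity with the BSDE is immediate.
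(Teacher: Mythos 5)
Your proposal is correct and follows essentially the same route as the paper: in one direction you read the BSDE with $N\equiv0$ as the self-financing identity $\xi=Y_0+\int^{T^*}_0\mathbb{H}_s\,d\bar{\mathbb{P}}_s$, and in the other you turn the value process of an attaining strategy into a solution of $\text{BSDE}(f,\xi)$ with vanishing orthogonal part (checking $L^2_{T^*}$-membership via the martingale maximal inequality and the estimate (\ref{drift estimate})) and conclude by uniqueness. The only cosmetic differences are that the paper uses Burkholder--Davis--Gundy where you use Doob, and obtains $Y_0\in L^2$ directly from $\xi,\int^{T^*}_0\mathbb{H}_s\,d\bar{\mathbb{P}}_s\in L^2(\bold{P})$ rather than via Lemma \ref{lemma: mart}.
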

\begin{proof}
Suppose that the claim $\xi\in L^2(\mathcal{F}_{T^*},\bold{P})$ is approximately attainable. Then there exists a self-financing generalized $L^2$-strategy $\Phi=(\mathbb{H},c)$ such that $\xi=V_{T^*}(\Phi)$. Now let $Y=V(\Phi)$. By the definition of the value process, $Y$ is right-continuous, adapted, and square-integrable. Moreover, the process $Y$ is in $L^2_{T^*}$. Indeed, we have that
\begin{equation*}
	\|Y\|^2_{T^*}=\bold{E}\left[\int^{T^*}_0Y^2_t\,dA_t\right]\leq T^*\|A_{T^*}\|_{L^\infty}\bold{E}\left[\sup_{t\in[0,T^*]}Y^2_t\right].
\end{equation*}
For each $t\in[0,T^*]$, we have
\begin{align*}
	Y^2_t&\leq2c^2+2\left(\int^t_0\mathbb{H}_s\,d\bar{\mathbb{P}}_s\right)^2\\
		&\leq2c^2+4\left(\int^t_0\mathbb{H}_s\,d\mathbb{M}_s\right)^2+4\left(\int^t_0\mathbb{H}_s(b_s)\,dA_s\right)^2\\
		&\leq2c^2+4\left(\int^t_0\mathbb{H}_s\,d\mathbb{M}_s\right)^2+4\left(\int^{T^*}_0|\mathbb{H}_s(b_s)|\,dA_s\right)^2.
\end{align*}
Since $c\in L^2(\mathcal{F}_0,\bold{P})$, $\mathbb{H}\in L^2(\mathbb{M},U')$ and the increasing process $A$ is bounded, by using the Burkholder--Davis--Gundy inequality and the estimate (\ref{drift estimate}), we see that $\|Y\|_{T^*}<\infty$ and hence $Y\in L^2_{T^*}$. Since
\begin{equation*}
	\xi=V_{T^*}(\Phi)=Y_{T^*}=Y_t+\int^{T^*}_t\mathbb{H}_s\,d\bar{\mathbb{P}}_s=Y_t+\int^{T^*}_t\mathbb{H}_s(b_s)\,dA_s+\int^{T^*}_t\mathbb{H}_s\,d\mathbb{M}_s
\end{equation*}
for each $t\in[0,T^*]$, we see that the triple $(Y,\mathbb{H},0)\in L^2_{T^*}\times L^2(\mathbb{M},U')\times\mathbb{H}^2(\bold{P})$ is the (unique) solution of the $\text{BSDE}(f,\xi)$ with $f(\omega,t,y,\mathbb{H}_{t,\omega})=-\mathbb{H}_{t,\omega}(b_{t,\omega})$.\par
Conversely, assume that the assertion (\rnum{2}) holds. Then
\begin{equation*}
Y_t=\xi-\int^{T^*}_t\mathbb{H}_s(b_s)\,dA_s-\int^{T^*}_t\mathbb{H}_s\,d\mathbb{M}_s=\xi-\int^{T^*}_t\mathbb{H}_s\,d\bar{\mathbb{P}}_s,\ t\in[0,T^*],
\end{equation*}
in particular, $\xi=Y_0+\int^{T^*}_0\mathbb{H}_s\,d\bar{\mathbb{P}}_s$ $\bold{P}$-a.s. Since the claim $\xi$ and the stochastic integral $\int^{T^*}_0\mathbb{H}_s\,d\bar{\mathbb{P}}_s$ is in $L^2(\bold{P})$, the random variable $Y_0$ is in $L^2(\mathcal{F}_0,\bold{P})$. Hence, the pair $\Phi=(\mathbb{H},Y_0)$ is a self-financing generalized $L^2$-strategy and satisfies $V_{T^*}(\Phi)=\xi$ $\bold{P}$-a.s. The remaining assertions are trivial.
\end{proof}
For an approximately attainable claim $\xi$, there exists a self-financing generalized $L^2$-strategy $\Phi$ which formally replicates the claim. However, this strategy $\Phi$ is fictitious and we should construct a reasonable approximate sequence for $\Phi$ which is meaningful in finance.\par
To this end, we focus on the notions of locally risk-minimizing strategies and the F\"{o}llmer--Schweizer decomposition of contingent claims in small markets that were introduced by Schweizer \cite{ic_Schweizer_01,ic_Schweizer_08}. We shall recall the definitions and the corresponding results in our model.
\begin{defi}
\begin{enumerate}
\renewcommand{\labelenumi}{(\roman{enumi})}
Fix $n\in\mathbb{N}$ and consider the $n$-th small market $\bar{\mathbb{P}}^n$.
\item
A pair $\Delta=(\delta,\epsilon)$ consisting of an $\mathbb{R}^n$-valued predictable process $\delta$ and a real-valued adapted process $\epsilon$ is called a small perturbation in the $n$-th small market if $\langle\int^\cdot_0\delta_s\,d\mathbb{M}^n_s\rangle=\int^\cdot_0|\delta_s|^2_{U'_s}\,dA_s$ (and hence also $\int^\cdot_0|\sum^n_{i=1}\delta^i_sb_s(T_i)|\,dA_s$) are uniformly bounded, the pair $V(\Delta)=\epsilon+\sum^n_{i=1}\delta^i\bar{P}^{T_i}$ is square-integrable, and $V_{T^*}(\Delta)=0$ $\bold{P}$-a.s.
\item
For each small perturbation $\Delta=(\delta,\epsilon)$ and subinterval $(s,t]$ of $[0,T^*]$, we define the small perturbation
\begin{equation*}
	\Delta|_{(s,t]}=
\begin{cases}
		(\delta\1_{\rsi s,t\rsi},\epsilon\1_{\lsi s,t\lsi})\ &\text{if}\ t<T^*,\\
		(\delta\1_{\rsi s,t\rsi},\epsilon\1_{\lsi s,T^*\rsi})\ &\text{if}\ t=T^*.
\end{cases}
\end{equation*}
For an $L^2$ strategy $\phi$ and a small perturbation $\Delta$, both in the $n$-th small market, and a partition $\tau=\{t_0,t_1,\dots,t_k\}\subset[0,T^*]$ with $0=t_0<t_1<\dots<t_k=T^*$, we set
\begin{equation*}
	r^\tau[\phi,\Delta]=\sum^{k-1}_{i=0}\frac{R_{t_i}(\phi+\Delta|_{(t_i,t_{i+1}]})-R_{t_i}(\phi)}{\bold{E}[A_{t_{i+1}}-A_{t_i}\ |\ \mathcal{F}_{t_i}]}\1_{\rsi t_i,t_{i+1}\rsi}.
\end{equation*}
(Note that the process $A$ is strictly increasing and bounded by our assumptions.)
\item
An $L^2$-strategy $\phi$ in the $n$-th small market is called locally risk-minimizing if for every small perturbation $\Delta$ in the $n$-th small market and every increasing sequence $(\tau_k)_{k\in\mathbb{N}}$ of partitions tending to the identity, we have
\begin{equation*}
	\liminf_{k\to\infty}r^{\tau_k}[\phi,\Delta]\geq0\ dA_t\otimes d\bold{P}\text{-a.s.}
\end{equation*}
\end{enumerate}
\end{defi}
In our model, since the process $A$ and hence the mean-variance tradeoff process $\mathcal{K}$ is continuous, locally risk-minimizing strategies in small markets are characterized by the following theorem.
\begin{theo}[Schweizer \cite{ic_Schweizer_08}]\label{theo: Schweizer}
Suppose that a claim $\xi\in L^2(\mathcal{F}_{T^*},\bold{P})$ is given. Fix $n\in\mathbb{N}$. Then, for an $L^2$-strategy $\phi$ in the $n$-th small market satisfying $V_{T^*}(\phi)=\xi$ $\bold{P}$-a.s., the following assertions are equivalent.
\begin{enumerate}
\renewcommand{\labelenumi}{(\roman{enumi})}
\item
$\phi$ is locally risk-minimizing in the $n$-th small market.
\item
$\phi$ is mean-self-financing and the cost function $C(\phi)$ is strongly orthogonal to $\mathbb{M}^n$.
\end{enumerate}
\end{theo}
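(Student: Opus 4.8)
\emph{Approach.} The statement is Schweizer's characterisation of locally risk-minimising strategies (\cite{ic_Schweizer_08}) applied to the finite-dimensional square-integrable semimartingale $\bar{\mathbb{P}}^n$; since $\bar{\mathbb{P}}^n$ satisfies the structure condition with a \emph{continuous} process $A$, and hence continuous mean-variance tradeoff, none of Schweizer's arguments need modification, so the quickest route is to cite \cite{ic_Schweizer_08} directly. For completeness I would reproduce the skeleton. The backbone is an explicit formula for the effect of a small perturbation on the risk. Given an $L^2$-strategy $\phi=(H,\eta)$ with cost process $C:=C(\phi)$ and a small perturbation $\Delta=(\delta,\epsilon)$ in the $n$-th small market, a direct computation from Definition~\ref{def: strategy in small markets} gives $C_{t_i}(\phi+\Delta|_{(t_i,t_{i+1}]})=C_{t_i}(\phi)+\epsilon_{t_i}$ and $C_{T^*}(\phi+\Delta|_{(t_i,t_{i+1}]})=C_{T^*}(\phi)-G_i$ for $t_{i+1}<T^*$, where $G_i:=\int_{t_i}^{t_{i+1}}\delta_s\,d\bar{\mathbb{P}}^n_s$; squaring and conditioning on $\mathcal{F}_{t_i}$ yields
\begin{align*}
	&R_{t_i}(\phi+\Delta|_{(t_i,t_{i+1}]})-R_{t_i}(\phi)\\
	&\quad=-2\epsilon_{t_i}\bold{E}[C_{T^*}-C_{t_i}\mid\mathcal{F}_{t_i}]+\epsilon_{t_i}^2-2\bold{E}[(C_{T^*}-C_{t_i})G_i\mid\mathcal{F}_{t_i}]+2\epsilon_{t_i}\bold{E}[G_i\mid\mathcal{F}_{t_i}]+\bold{E}[G_i^2\mid\mathcal{F}_{t_i}].
\end{align*}

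\emph{Proof of (i)$\Rightarrow$(ii).} First obtain mean-self-financingness by testing against $\Delta=(0,\epsilon)$ with $\epsilon_t:=\bold{E}[C_{T^*}\mid\mathcal{F}_t]-C_t$ (square-integrable, with $\epsilon_{T^*}=0$, hence a legitimate small perturbation). Then $G_i=0$, $\bold{E}[C_{T^*}-C_{t_i}\mid\mathcal{F}_{t_i}]=\epsilon_{t_i}$, and the identity collapses to $R_{t_i}(\phi+\Delta|_{(t_i,t_{i+1}]})-R_{t_i}(\phi)=-\epsilon_{t_i}^2$, so $r^{\tau_k}[\phi,\Delta]=-\epsilon_{t_i}^2/\bold{E}[A_{t_{i+1}}-A_{t_i}\mid\mathcal{F}_{t_i}]\le0$ on $\rsi t_i,t_{i+1}\rsi$. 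Because $A$ is continuous and bounded, $\bold{E}[A_{t_{i+1}}-A_{t_i}\mid\mathcal{F}_{t_i}]\to0$ along any refining sequence of partitions, so $\liminf_k r^{\tau_k}[\phi,\Delta]\ge0$ forces $\epsilon=0$ $dA_t\otimes d\bold{P}$-a.e.; since $A$ is strictly increasing and $C$, $\bold{E}[C_{T^*}\mid\mathcal{F}_\cdot]$ are \cadlag, this gives $C=\bold{E}[C_{T^*}\mid\mathcal{F}_\cdot]$ up to indistinguishability, i.e.\ $C(\phi)$ is a martingale. Granting this, $\bold{E}[C_{T^*}-C_{t_i}\mid\mathcal{F}_{t_i}]=0$, and writing $G_i=\int_{t_i}^{t_{i+1}}\delta_s\,d\mathbb{M}^n_s+\int_{t_i}^{t_{i+1}}\delta_s(b_s)\,dA_s$ and using the martingale property of $C(\phi)$, one splits $\bold{E}[(C_{T^*}-C_{t_i})G_i\mid\mathcal{F}_{t_i}]$ into $\bold{E}[\sum_{j=1}^n\int_{t_i}^{t_{i+1}}\delta^j_s\,d\langle C(\phi),M^{T_j}\rangle_s\mid\mathcal{F}_{t_i}]$ plus a drift term involving $\int_{t_i}^{t_{i+1}}\delta_s(b_s)\,dA_s$. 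Then run $\delta$ over bounded (in the $U'$-norm) $\mathbb{R}^n$-valued predictable processes vanishing on $[s_0,T^*]$ for some $s_0<T^*$ — so that $\Delta=(\delta,0)$, and all its rescalings $(c\delta,0)$, are legitimate small perturbations — divide by $\bold{E}[A_{t_{i+1}}-A_{t_i}\mid\mathcal{F}_{t_i}]$, and pass to the limit: the drift term is of lower order (bound $|\delta_s(b_s)|=|(\delta_s,\lambda_s)_{U'_s}|\le\|\delta\|_\infty|\lambda_s|_{U'_s}$, use boundedness and continuity of $\mathcal{K}=\int|\lambda|^2_{U'}\,dA$, Kunita--Watanabe, and Doob's inequality for $C(\phi)$ on the subintervals), and rescaling $\delta\mapsto c\delta$, $c\downarrow0$, kills the quadratic contribution $c^2\bold{E}[G_i^2\mid\mathcal{F}_{t_i}]$, leaving $\pm\frac{d}{dA}\langle C(\phi),\int\delta\,d\mathbb{M}^n\rangle\le0$ for $\pm\delta$, hence $\langle C(\phi),\int\delta\,d\mathbb{M}^n\rangle\equiv0$ for all such $\delta$; letting $\delta$ exhaust the Dirac masses ($s_0\uparrow T^*$) and using $d\langle C(\phi),M^{T_j}\rangle\ll d\langle M^{T_j}\rangle\ll dA$ (Kunita--Watanabe) yields $\langle C(\phi),M^{T_j}\rangle\equiv0$ for $j=1,\dots,n$, i.e.\ $C(\phi)\perp\mathbb{M}^n$.

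\emph{Proof of (ii)$\Rightarrow$(i).} Substitute $\bold{E}[C_{T^*}-C_{t_i}\mid\mathcal{F}_{t_i}]=0$ and $\langle C(\phi),M^{T_j}\rangle\equiv0$ into the identity: the martingale part of the cross term drops out, and its drift part is of lower order relative to $\bold{E}[A_{t_{i+1}}-A_{t_i}\mid\mathcal{F}_{t_i}]$ and, by the rescaling argument, cannot make the limit negative; the remaining contributions are $\epsilon_{t_i}^2\ge0$ (which, wherever $\epsilon_{t-}\ne0$, already sends $r^{\tau_k}[\phi,\Delta]\to+\infty$ since $\epsilon_{t_i}^2/\bold{E}[A_{t_{i+1}}-A_{t_i}\mid\mathcal{F}_{t_i}]\to\infty$), $\bold{E}[G_i^2\mid\mathcal{F}_{t_i}]\ge0$, and the lower-order term $2\epsilon_{t_i}\bold{E}[G_i\mid\mathcal{F}_{t_i}]$; passing to the limit gives $\liminf_k r^{\tau_k}[\phi,\Delta]\ge0$ $dA_t\otimes d\bold{P}$-a.e., which is (i).

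\emph{Main obstacle.} The delicate point — and the reason to lean on \cite{ic_Schweizer_08} rather than write everything out — is precisely the passage to the limit along refining partitions: one must show that the finite-variation (drift) contributions coming from the structure condition are genuinely negligible compared with the martingale ones, that the pathwise $\liminf$ condition translates into $dA_t\otimes d\bold{P}$-a.e.\ inequalities for the relevant Radon--Nikodym densities against $A$ (a Lebesgue-differentiation argument), and that continuity of $A$ lets one discard singular parts together with the jumps of $C(\phi)$ and $\langle C(\phi)\rangle$ on a $dA$-null set. This is Schweizer's argument, which carries over verbatim here because $A$, and hence the mean-variance tradeoff, is continuous.
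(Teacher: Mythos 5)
The paper states this result as a citation of Schweizer \cite{ic_Schweizer_08} and gives no proof of its own, which is exactly the route you take; your reconstructed skeleton (the perturbation identity for $R_{t_i}(\phi+\Delta|_{(t_i,t_{i+1}]})-R_{t_i}(\phi)$, testing first with $(0,\epsilon)$ to get the martingale property of $C(\phi)$ and then with $(\delta,0)$ and a rescaling to get orthogonality to $\mathbb{M}^n$, with the converse by substitution) is a faithful sketch of Schweizer's argument and correctly identifies that the continuity of $A$, hence of the mean-variance tradeoff, is what makes the cited theorem applicable here. This matches the paper's treatment.
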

Because of Theorem \ref{theo: Schweizer}, locally risk-minimizing strategies are related to the F\"{o}llmer--Schweizer decomposition of a claim defined below.
\begin{defi}
For each $n\in\mathbb{N}$, we say that a claim $\xi\in L^2(\mathcal{F}_{T^*},\bold{P})$ admits the F\"{o}llmer--Schweizer decomposition in the $n$-th small market if it is expressed as
\begin{equation*}
	\xi=\xi^{0,n}+\int^{T^*}_0H^{\xi,n}_s\,d\bar{\mathbb{P}}^n_s+N^{\xi,n}_{T^*}\ \bold{P}\text{-a.s.},
\end{equation*}
where $\xi^{0,n}\in L^2(\mathcal{F}_0,\bold{P})$, $H^{\xi,n}\in L^2(\mathbb{M}^n,\mathbb{R}^n)$, and the process $N^{\xi,n}=(N^{\xi,n}_t)_{t\in[0,T^*]}$ is a (right-continuous) square-integrable martingale that is null at zero and strongly orthogonal to $\mathbb{M}^n$.
\end{defi}
Fix any claim $\xi\in L^2(\mathcal{F}_{T^*},\bold{P})$ and $n\in\mathbb{N}$. Consider the finite-dimensional $\text{BSDE}(f,\xi)$ with $f(\omega,t,y,h)=-h(b_{t,\omega})=-\sum^n_{i=1}h^ib_{t,\omega}(T_i)$ for $h\in\mathbb{R}^n$;
\begin{equation}\label{Follmer-Schweizer BSDE}
	Y^n_t=\xi-\int^{T^*}_tH^n_s(b_s)\,dA_s-\int^{T^*}_tH^n_s\,d\mathbb{M}^n_s-\int^{T^*}_tdN^n_s,\ t\in[0,T^*].
\end{equation}
By Corollary \ref{cor: BSDE}, there exists a unique solution $(Y^n,H^n,N^n)\in L^2_{T^*}\times L^2(\mathbb{M}^n,\mathbb{R}^n)\times\mathcal{H}^2(\bold{P})$ of the BSDE (\ref{Follmer-Schweizer BSDE}). In particular, the claim $\xi$ can be written as
\begin{align*}
	\xi&=Y^n_0+\int^{T^*}_0H^n_s(b_s)\,dA_s+\int^{T^*}_0H^n_s\,d\mathbb{M}^n_s+N^n_{T^*}\\
		&=Y^n_0+\int^{T^*}_0H^n_s\,d\bar{\mathbb{P}}^n_s+N^n_{T^*}\ \bold{P}\text{-a.s.}
\end{align*}
Since the square-integrable martingale $N^n$ is null at zero and strongly orthogonal to $\mathbb{M}^n$, the claim $\xi$ admits the unique F\"{o}llmer--Schweizer decomposition in the $n$-th small market.\par
Define a right-continuous square-integrable martingale $C^n$ by $C^n_t=Y^n_0+N^n_t$, $t\in[0,T^*]$, and consider the corresponding process $\eta^n$ as in Proposition \ref{prop: finite-dimensional strategy} with respect to the finite-dimensional integrand $H^n\in L^2(\mathbb{M}^n,\mathbb{R}^n)$ and the martingale $C^n$. Then the pair $\phi^n=(H^n,\eta^n)$ is a mean-self-financing $L^2$-strategy in the $n$-th small market with the cost process $C^n$ satisfying $V_{T^*}(\phi^n)=\xi$ $\bold{P}$-a.s. Since $C^n$ is a square-integrable martingale strongly orthogonal to $\mathbb{M}^n$, by Theorem \ref{theo: Schweizer}, $\phi^n$ is locally risk-minimizing in the $n$-th small market.\par
Now we can state our result about a reasonable approximate sequence of a generalized strategy.
\begin{theo}\label{theorem: application}
Let $\xi\in L^2(\mathcal{F}_{T^*},\bold{P})$ be an approximately attainable claim, $\Phi=(\mathbb{H},c)$ be the self-financing generalized $L^2$-strategy satisfying $V_{T^*}(\Phi)=\xi$ $\bold{P}$-a.s., and $\phi^n=(H^n,\eta^n)$ be the locally risk-minimizing $L^2$-strategy in the $n$-th small market $\bar{\mathbb{P}}^n$ satisfying $V_{T^*}(\phi^n)=\xi$, for each $n\in\mathbb{N}$. Then it holds that
\begin{align*}
	&\lim_{n\to\infty}V(\phi^n)=V(\Phi)\hspace{4mm}\text{in}\ L^2_{T^*},\\
	&\lim_{n\to\infty}H^n=\mathbb{H}\hspace{4mm}\text{in}\ L^2(\mathbb{M},U'),\\
	&\lim_{n\to\infty}R(\phi^n)=0,
\end{align*}
where each $R(\phi^n)$ is the total risk of $\phi^n$ defined in Definition \ref{def: strategy in small markets}. Moreover, there exists a constant $\gamma>0$ depending only on the mean-variance tradeoff process $\mathcal{K}$ and the increasing process $A$ such that
\begin{equation*}
	\|V(\Phi)-V(\phi^n)\|^2_{T^*}+\|\mathbb{H}-H^n\|^2_\mathbb{M}\leq\gamma R(\phi^n)
\end{equation*}
for all $n\in\mathbb{N}$.
\end{theo}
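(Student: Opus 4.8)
The plan is to reduce the whole statement to the BSDE approximation result Corollary \ref{cor: BSDE}, applied to the data $(f,\xi)$ with driver $f(\omega,t,y,h)=-h(b_{t,\omega})$, $\alpha^2_t=1+|\lambda_t|^2_{U'_t}$ and $K_t=\mathcal{K}_t+A_t$. As recorded just before Proposition \ref{approximately attainable}, this data satisfies all the hypotheses of that corollary; in particular the Lipschitz bound (\ref{Lipschitz}) holds with $\theta_t=|\lambda_t|_{U'_t}$, since $|h^1(b_t)-h^2(b_t)|=|(h^1-h^2,\lambda_t)_{U'_t}|\leq|\lambda_t|_{U'_t}\,|h^1-h^2|_{U'_t}$.

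The first step is to identify the two value processes in the statement with the $Y$-components of the relevant BSDEs. By Proposition \ref{approximately attainable}, approximate attainability of $\xi$ forces $N\equiv0$ in the unique solution $(Y,\mathbb{H},N)$ of $\text{BSDE}(f,\xi)$, and then $\Phi=(\mathbb{H},Y_0)$ with $V(\Phi)=Y$. On the finite-dimensional side, the locally risk-minimizing strategy $\phi^n=(H^n,\eta^n)$ was built from the solution $(Y^n,H^n,N^n)$ of the F\"{o}llmer--Schweizer BSDE (\ref{Follmer-Schweizer BSDE}), which is exactly $\text{BSDE}^n(f,\xi)$, by setting $C^n_t=Y^n_0+N^n_t$ and choosing $\eta^n$ as in Proposition \ref{prop: finite-dimensional strategy}(i). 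Combining the formula $V(\phi^n)=\int^\cdot_0H^n_s\,d\bar{\mathbb{P}}^n_s+C^n$ from the proof of Proposition \ref{prop: finite-dimensional strategy}(i) with the rearrangement $Y^n_t=Y^n_0+\int^t_0H^n_s\,d\bar{\mathbb{P}}^n_s+N^n_t$ of (\ref{Follmer-Schweizer BSDE}) (here one uses $N^n_0=0$) yields $V(\phi^n)=Y^n$.

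The second step is to apply Corollary \ref{cor: BSDE} to $(f,\xi)$. Its parts (iii) and (iv) give $Y^n\to Y$ in $L^2_{T^*}$, $H^n\to\mathbb{H}$ in $L^2(\mathbb{M},U')$, $N^n\to N$ in $\mathcal{H}^2(\bold{P})$, together with the estimate $\|Y-Y^n\|^2_{T^*}+\|\mathbb{H}-H^n\|^2_\mathbb{M}\leq\gamma\|N-N^n\|^2_{\mathcal{H}^2(\bold{P})}$, where $\gamma$ depends only on $\|A_{T^*}\|_{L^\infty}$ and $\|K_{T^*}\|_{L^\infty}\leq\|\mathcal{K}_{T^*}\|_{L^\infty}+\|A_{T^*}\|_{L^\infty}$, hence only on $\mathcal{K}$ and $A$. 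Substituting $V(\Phi)=Y$, $V(\phi^n)=Y^n$ and $N\equiv0$ produces the first two asserted convergences, the convergence $N^n\to0$ in $\mathcal{H}^2(\bold{P})$, and the inequality $\|V(\Phi)-V(\phi^n)\|^2_{T^*}+\|\mathbb{H}-H^n\|^2_\mathbb{M}\leq\gamma\|N^n\|^2_{\mathcal{H}^2(\bold{P})}$.

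The last step computes the total risk. The cost process of $\phi^n$ is $C^n=Y^n_0+N^n$, so $C_{T^*}(\phi^n)-C_0(\phi^n)=N^n_{T^*}$ (again using $N^n_0=0$), whence $R(\phi^n)=\bold{E}[(N^n_{T^*})^2]=\bold{E}[[N^n]_{T^*}]=\|N^n\|^2_{\mathcal{H}^2(\bold{P})}$. Together with the previous step this gives $R(\phi^n)\to0$ and rewrites the estimate in exactly the form stated. I do not expect a real obstacle; the only points requiring care are the bookkeeping identity $V(\phi^n)=Y^n$ (a plain rearrangement of the finite-dimensional BSDE combined with the construction of $\phi^n$) and keeping track of which objects vanish because $\xi$ is approximately attainable, both of which are already essentially laid out in the text preceding the statement.
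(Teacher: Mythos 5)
Your proposal is correct and follows essentially the same route as the paper: identify $(V(\Phi),\mathbb{H},0)$ and $(V(\phi^n),H^n,N^n)$ as the unique solutions of the infinite- and finite-dimensional BSDEs for the driver $f(\omega,t,y,h)=-h(b_{t,\omega})$, note that $R(\phi^n)=\|N^n\|^2_{\mathcal{H}^2(\bold{P})}$, and invoke Corollary \ref{cor: BSDE}. Your write-up merely makes explicit some bookkeeping (the identity $V(\phi^n)=Y^n$ and the Lipschitz constant $\theta_t=|\lambda_t|_{U'_t}$) that the paper leaves implicit.
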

\begin{proof}
By Proposition \ref{approximately attainable}, $(V(\Phi),\mathbb{H},0)\in L^2_{T^*}\times L^2(\mathbb{M},U')\times\mathcal{H}^2(\bold{P})$ is the unique solution of the infinite-dimensional BSDE (\ref{hedgeBSDE}). On the other hand, for each $n\in\mathbb{N}$, $(V(\phi^n),H^n,N^n)\in L^2_{T^*}\times L^2(\mathbb{M}^n,\mathbb{R}^n)\times\mathcal{H}^2(\bold{P})$ with $N^n=C(\phi^n)-V_0(\phi^n)=C(\phi^n)-C_0(\phi^n)$ is the unique solution of the finite-dimensional BSDE (\ref{Follmer-Schweizer BSDE}). Note that, for each $n\in\mathbb{N}$, we have
\begin{equation}
	\|N^n\|^2_{\mathcal{H}^2(\bold{P})}=\|N^n_{T^*}\|^2_{L^2(\bold{P})}=\bold{E}\left[(C_{T^*}(\phi^n)-C_0(\phi^n))^2\right]=R(\phi^n).
\end{equation}
Hence, the assertions hold by Corollary \ref{cor: BSDE}.
\end{proof}
\begin{rem}
As mentioned in Schweizer \cite{ic_Schweizer_01}, for the locally risk-minimizing strategy $\phi^n$ in the $n$-th small market, the process $V(\phi^n)$ can be interpreted as an intrinsic valuation process for the claim $\xi$ based on the $n$-th small market with respect to the subjective criterion of local risk-minimization. Theorem \ref{theorem: application} says that, for an approximately attainable claim, the sequence of intrinsic valuation processes and that of optimal strategies based on small markets converge to the generalized valuation process and the generalized hedging strategy respectively in $L^2$-sense. Furthermore, the corresponding total risks tend to zero. In such a sense, we can construct a ``reasonable'' approximate sequence for a generalized strategy in a bond market. Note that, as shown in De Donno, Guasoni, and Pratelli \cite{a_DeDonno-_05}, in an incomplete large financial market, the super-replication prices of a contingent claim based on small markets do not necessarily converge to the one based on the large market. On the other hand, Theorem \ref{theorem: application} justifies the approximation procedure of valuation processes in terms of risks and gives a new perspective to the theory of pricing contingent claims in infinite-dimensional market models.
\end{rem}


\section*{Acknowledgments}
\paragraph{}
I would like to thank Professor Masanori Hino, who is my supervisor, and Professor Ichiro Shigekawa, for helpful discussions. I also wish to express my gratitude to Professor Jun Sekine for giving me a lot of valuable comments regarding researches on BSDEs and bond markets.


\bibliography{reference}

\begin{thebibliography}{10}

\bibitem{a_Bjork-_97}
T.~Bj\"ork, G.~Di~Masi, Y.~Kabanov, and W.~Runggaldier.
\newblock Towards a general theory of bond markets.
\newblock {\em Finance Stoch.}, 1(2):141--174, 1997.

\bibitem{ic_Buckdahn_95}
R.~Buckdahn.
\newblock Backward stochastic differential equations. {O}ption hedging under
  additional cost.
\newblock In {\em Seminar on {S}tochastic {A}nalysis, {R}andom {F}ields and
  {A}pplications ({A}scona, 1993)}, volume~36 of {\em Progr. Probab.}, pages
  307--318. Birkh\"auser, Basel, 1995.

\bibitem{a_Campi_09}
L.~Campi.
\newblock Mean-variance hedging in large financial markets.
\newblock {\em Stoch. Anal. Appl}, 27(6):1129--1147, 2009.

\bibitem{a_Carbone-_07}
R.~Carbone, B.~Ferrario, and M.~Santacroce.
\newblock Backward stochastic differential equations driven by c\`adl\`ag
  martingales.
\newblock {\em Teor. Veroyatn. Primen.}, 52(2):375--385, 2007.

\bibitem{b_Carmona-Tehranchi_06}
R.~A. Carmona and M.~R. Tehranchi.
\newblock {\em Interest rate models: an infinite dimensional stochastic
  analysis perspective}.
\newblock Springer Finance. Springer-Verlag, Berlin, 2006.

\bibitem{b_DaPrato-Zabczyk_14}
G.~Da~Prato and J.~Zabczyk.
\newblock {\em Stochastic equations in infinite dimensions}, volume 152 of {\em
  Encyclopedia of Mathematics and its Applications}.
\newblock Cambridge University Press, Cambridge, second edition, 2014.

\bibitem{a_DeDonno_07}
M.~De~Donno.
\newblock On a class of generalized integrands.
\newblock {\em Stoch. Anal. Appl.}, 25(6):1167--1188, 2007.

\bibitem{a_DeDonno-_05}
M.~De~Donno, P.~Guasoni, and M.~Pratelli.
\newblock Super-replication and utility maximization in large financial
  markets.
\newblock {\em Stochastic Process. Appl.}, 115(12):2006--2022, 2005.

\bibitem{a_DeDonno-Pratelli_04}
M.~De~Donno and M.~Pratelli.
\newblock On the use of measure-valued strategies in bond markets.
\newblock {\em Finance Stoch.}, 8(1):87--109, 2004.

\bibitem{a_DeDonno-Pratelli_05}
M.~De~Donno and M.~Pratelli.
\newblock A theory of stochastic integration for bond markets.
\newblock {\em Ann. Appl. Probab.}, 15(4):2773--2791, 2005.

\bibitem{ic_DeDonno-Pratelli_06}
M.~De~Donno and M.~Pratelli.
\newblock Stochastic integration with respect to a sequence of semimartingales.
\newblock In {\em In memoriam {P}aul-{A}ndr\'e {M}eyer: {S}\'eminaire de
  {P}robabilit\'es {XXXIX}}, volume 1874 of {\em Lecture Notes in Math.}, pages
  119--135. Springer, Berlin, 2006.

\bibitem{a_Delbaen-Schachermayer_95}
F.~Delbaen and W.~Schachermayer.
\newblock The existence of absolutely continuous local martingale measures.
\newblock {\em Ann. Appl. Probab.}, 5(4):926--945, 1995.

\bibitem{ic_ElKaroui-Huang_97}
N.~El~Karoui and S.~J. Huang.
\newblock A general result of existence and uniqueness of backward stochastic
  differential equations.
\newblock In {\em Backward stochastic differential equations ({P}aris,
  1995--1996)}, volume 364 of {\em Pitman Res. Notes Math. Ser.}, pages 27--36.
  Longman, Harlow, 1997.

\bibitem{a_ElKaroui-_97}
N.~El~Karoui, S.~Peng, and M.~C. Quenez.
\newblock Backward stochastic differential equations in finance.
\newblock {\em Math. Finance}, 7(1):1--71, 1997.

\bibitem{ic_Mikulevicius-Rozovskii_98}
R.~Mikulevicius and B.~L. Rozovskii.
\newblock Normalized stochastic integrals in topological vector spaces.
\newblock In {\em S\'eminaire de {P}robabilit\'es, {XXXII}}, volume 1686 of
  {\em Lecture Notes in Math.}, pages 137--165. Springer, Berlin, 1998.

\bibitem{ic_Mikulevicius-Rozovskii_99}
R.~Mikulevicius and B.~L. Rozovskii.
\newblock Martingale problems for stochastic {PDE}'s.
\newblock In {\em Stochastic partial differential equations: six perspectives},
  volume~64 of {\em Math. Surveys Monogr.}, pages 243--325. Amer. Math. Soc.,
  Providence, RI, 1999.

\bibitem{a_Schweizer_91}
M.~Schweizer.
\newblock Option hedging for semimartingales.
\newblock {\em Stochastic Process. Appl.}, 37(2):339--363, 1991.

\bibitem{a_Schweizer_95}
M.~Schweizer.
\newblock On the minimal martingale measure and the {F}\"ollmer-{S}chweizer
  decomposition.
\newblock {\em Stochastic Anal. Appl.}, 13(5):573--599, 1995.

\bibitem{ic_Schweizer_01}
M.~Schweizer.
\newblock A guided tour through quadratic hedging approaches.
\newblock In {\em Option pricing, interest rates and risk management}, Handb.
  Math. Finance, pages 538--574. Cambridge Univ. Press, Cambridge, 2001.

\bibitem{ic_Schweizer_08}
M.~Schweizer.
\newblock Local risk-minimization for multidimensional assets and payment
  streams.
\newblock In {\em Advances in mathematics of finance}, volume~83 of {\em Banach
  Center Publ.}, pages 213--229. Polish Acad. Sci. Inst. Math., Warsaw, 2008.

\end{thebibliography}


\end{document}